\documentclass[a4paper,10pt]{article}
\usepackage[utf8]{inputenc}
\usepackage{amsmath}
\usepackage{tikz}
\usepackage{amsthm}
\usepackage{amssymb}
\usepackage{hyperref}
 \usepackage{cancel}
\usepackage{pdfsync}
\usepackage{mathrsfs}
\usepackage{systeme}
\usepackage{textcomp}
\usepackage{todonotes}
\usepackage{xcolor}
 \usepackage{cancel} 
\usepackage{caption2}     
\usepackage{float}        
\usepackage{fancyhdr}     
\usepackage{graphicx}     
\usepackage{lastpage}     
\graphicspath{{S-V paper/}} 

\newtheorem{lem}{Lemma}[section]

\newtheorem{thm}{Theorem}[section]

\numberwithin{equation}{section}

\theoremstyle{definition}
\newtheorem{defn}{Definition}[section]

\theoremstyle{remark}
\newtheorem{rmk}{Remark}[section]

\title{Boundary stabilization of flows in networks of open channels modeled by Saint-Venant equations}

\author{Amaury Hayat\thanks{CERMICS, \'{E}cole des Ponts ParisTech, Champs sur Marne, 77455 Marne la Vall\'{e}e, France (e-mail: amaury.hayat@enpc.fr).}	
\ and Yating Hu\thanks{Tongji University, 200092, Shanghai, China (e-mails:huyating@tongji.edu.cn, shang@tongji.edu.cn).} \footnotemark[1] 
\ and Peipei Shang \footnotemark[2]
}
\date{}

\begin{document}
	\maketitle
    \begin{abstract}
   This work investigates the boundary stabilization of flows in star-shaped and tree-shaped networks of open channels governed by the Saint-Venant equations with a friction term. Due to the existence of the friction term, the steady-states are non-uniform. We show that any such network can be stabilized with only controls at the terminal nodes of the network, even when there are no controls at the nodes inside the network. The number of control is optimal.
 The main tool we use is the Lyapunov approach, and the main challenge is that the state-of-the-art Lyapunov functions developed for Saint-Venant equations 
 with source terms cannot be used. In this work,
  we manage to construct a new efficient and explicit Lyapunov function and, in turn, we give explicit ranges of the control tuning parameters that depend only on the values of the given non-uniform steady-states at the ends of the branches. Moreover, this Lyapunov function also improves the existing conditions found in the last decade for a single channel modelled by Saint-Venant equations.
    \end{abstract}
{\bf Keywords:}\quad
Saint-Venant equations,  Lyapunov approach, Stabilization, Feedback control, Networks, Star-shaped, Tree-shaped

\section{Introduction}

\subsection{Context}
Saint-Venant equations are prototypical one-dimensional hyperbolic systems that describe the flow of shallow water in open channels. These equations consist of the conservation of mass equations and  momentum equations, which are both extensively studied in mathematics and widely utilized in hydraulic engineering for rivers, canals, and other waterways (see \cite{graf1998fluvial}). The initial research on the exponential stability of the Saint-Venant equations using boundary feedback control starting in the 1980 and focused on a simplified single-channel model, which did not account for friction or slope. 
In the earliest work \cite{Li1984} by Greenberg and Li, the authors obtained the stabilization of $2\times 2$ system in the framework of $C^1$
solutions by characteristic method. This was later generalized in \cite{JCC,Qin,Zhao} and then in \cite{BC2015,C1,C122}. Later on, in the years 2000, Coron et al. identified in \cite{CBA2008,CAB2007} a general $H^2$-Lyapunov function 
which provided sufficient conditions for
boundary stabilization, still for the simplified model. 
This approach was improved
in \cite{bastin2011coron} to more general systems and, in 2017, Bastin and Coron introduced a new explicit Lyapunov function for ensuring the exponential stability of some physical $2\times 2$ hyperbolic systems with nonuniform steady-states including the Saint-Venant equations, this time with a friction term (but no slope) \cite{BC2017}. In 2019-2021, Hayat and Shang constructed yet another explicit quadratic Lyapunov function which could handle the general Saint-Venant equations (with arbitrary slope and friction terms) in \cite{HS} and generalized the results to density-velocity equations in \cite{hayat2021exponential}. A remarkable feature compared to the Lyapunov function of \cite{BC2017} is that it corresponds to the optimal Lyapunov function for a single channel in the sense of \cite{bastin2011coron}.
Another approach, the backstepping method, was also used to solve the stabilization problems,
see for example \cite{CK2013,Krstic2013,Krstic2015,hu2015control,hu2019boundary}. While very powerful, this approach often give rise to full-state feedback control that require an observer to be applied in real-life applications.
Recent studies have also investigated the stabilization of two-dimensional Saint-Venant equations
\cite{dia2013boundary,herty2024boundary,Yong2024}, a problem that also has significant practical value. \\

All the previous examples deal with single-channel flows, as a logical first step, while most applications real-world applications are inherently networked in nature. In the past two decades, networks of hyperbolic systems have been studied, for instance, in \cite{dick2014stabilization,gugat2023limits,perrollaz2014finite}.
In the last years, based on the practical applications, the controllability and stability of open channel networks modeled by the Saint-Venant equations have increasingly garnered the attention of researchers, with a particular focus on boundary controllability and boundary stabilization, given the physical nature of the problem. In \cite{LiNet20041,LiNet2005,LiNet20042}, Li et al. studied the exact controllability of Saint-Venant equations in various networks. We also refer to \cite{LWG} and the references therein for a comprehensive review of controllability of nodal profile for the networks without loop.
Recently, Li et al. have considered the exact boundary controllability of nodal profile for Saint-Venant systems on the networks with loops in \cite{LI20211,Lizhuang2019}.
For the stabilization problem, the current state of the art is still limited. When there is no source term, 
the exponential stabilization
is obtained in \cite{chba2002,JCC,LG2002,TAX2017,TAX2018}
for various networks.
When the source term arises, in \cite{BCA2009}, the authors considered a network in cascade where the slope and friction ``compensate" each other, thus resulted in constant equilibria.
A more recent work is \cite{hayat2023pi}, there the authors addressed the boundary Input-to-State Stability of a network
of channels in cascade, using proportional integral controllers and considering arbitrary cross sections, slope and friction. Nevertheless, a cascade system is the most simple example of network and is very far from covering networks in all generality.\\

In reality, there are numerous examples of river systems that exhibit star-shaped patterns.
When rivers flow into lakes or seas, sediment is deposited, forming deltas \cite{meade1996river}. Consequently, the rivers naturally bifurcate, creating a star-shaped pattern or a tree-shaped pattern. Typical examples of  deltas include the Fraser Delta and the Yellow River Delta and many delta regions are densely populated and serve as commercial and transportation hubs. The instability of the bifurcated water systems can lead to land erosion in deltas, resulting in significant economic losses (see \cite{blum2009drowning}).  In \cite{kleinhans2013splitting}, bifurcations are also found in alluvial fans, braided rivers, lowland rivers with meandering or anastomosing patterns. In these naturally formed river landforms, star-shaped structures can also be observed. According to \cite{syvitski2007morphodynamics,tang2022artificial}, artificial bifurcations have been constructed to prevent flooding and support agricultural irrigation. This highlights the importance of star-shaped and tree-shaped structures in hydraulic engineering. 
\textcolor{black}{
When the star-shaped network with source terms is concerned, one can refer to \cite{hayatHuShang2025} where the authors consider a supercritical channel and several subcritical channel that merge into a single supercritical channel and show that one only needs to have controls at the upstream of the incoming supercritical channel to stabilize the system. This situation, however, is very specific.}

\subsection{Contribution of this paper}

In this paper, we address the boundary stabilization of the Saint-Venant equations for \textcolor{black}{divergent} star-shaped and tree-shaped networks
taking into account the presence of the friction as a source term.  Especially, in this case, the steady-states are space-varying.\\

The difficulty in \textcolor{black}{such kind of} network is that, in real applications 
(such as irrigation channel for instance) 
applying a control at the internal node of the tree is, at best, challenging \textcolor{black}{when there are several outgoing branches
}. Rather, the boundary conditions at these nodes are imposed by the physics of the system. As a consequence, the relative number of available controls in a network can be much lower than in a single channel.
Our main contribution is to show that one can still achieve 
exponential stabilization in the $H^{2}$ norm of the whole 
tree-shaped networks with only one 
feedback control at each 
terminal nodes of the tree (see Figure \ref{fig:star} and \ref{f2}). This means, in particular, that we can surprisingly achieve the exponential stabilization of the full network with a number of controls that can be relatively small for large trees and no control at internal nodes of the network or at the root of the network.
Moreover, the conditions on the control tuning parameters are explicit and depend only on the values of the 
target
steady-states at the boundaries of the terminal branches.\\

To do so, we found a new Lyapunov function for the $H^{2}$-norm, different from the ones in \cite{BC2017,hayat2021exponential}
(which are not compatible with the physical conditions imposed by the network) while still being fully explicit. Just as the Lyapunov functions in \cite{BC2017,hayat2021exponential} this Lyapunov function has the remarkable property of being suitable for any length of the channels, while this usually does not happen for hyperbolic systems in general (see \cite[Section 2.7]{BCBook} or \cite{C1,C122}). As a side product, it also improves the set of best known conditions for the single channel geometry by providing explicit sufficient conditions that are not contained in the ones of \cite{BC2017,hayat2021exponential} (see Theorem \ref{thm:single}).
\\

On the other hand, since the Saint-Venant equations are a typical density-velocity system--a class of system omnipresent in physics which encompass all systems that consist in a flux conservation and a momentum equation \cite{BC2017,hayat2021exponential}. It is expected that our approach could be extended to other density-velocity systems in networks. Among which, the most studied examples are the isentropic Euler equations
for the gas motion in pipelines, for which
one can refer to the related works by Gugat et al
\cite{GDL2011,gugat2017coupling} and the references therein. \textcolor{black}{In particular \cite{gugat2017coupling} provides a study of a general junction in a network of isothermal Euler equations without source term.}\\

The organization of the paper is the following: we state the main results in Section \ref{sec:mainresults}, i.e. the stabilization result of star-shaped model in Section \ref{star} and tree-shaped network in Section \ref{tree}.
We show the result in the case of a linearized system and introduce the main tools in Section \ref{sec:lyap} which contains the main proof. 
In Section \ref{sec:nonlinear}, we show that the results of Section \ref{sec:lyap} can be extended to the nonlinear system.
In Section \ref{LtreeL}, we generalize the result to the tree-shaped network and give the sketch of the proof for the linearized system.
Finally, we give conclusion in Section \ref{con} and some useful computations can be found in the Appendix.

	\section{Main Results}
    \label{sec:mainresults}
We start by presenting the main result for a star-shaped network with a single internal node in Section \ref{star}, and we state the main result in full generality in Section \ref{ssec:treeshape}.
    \subsection{The Star-shaped Model}\label{star}
	In this section, we  consider a star-shaped model  of divergent flow composed of a main channel and $n-1$ ($n\geq 3$) branches, the direction of the water is shown in Figure \ref{fig:star}. Such a geometry can be seen as the basic brick of a tree-shaped network.
	\begin{figure}[h]
		\renewcommand{\captionfont}{\small}
		\centering
		\includegraphics[height=5cm]{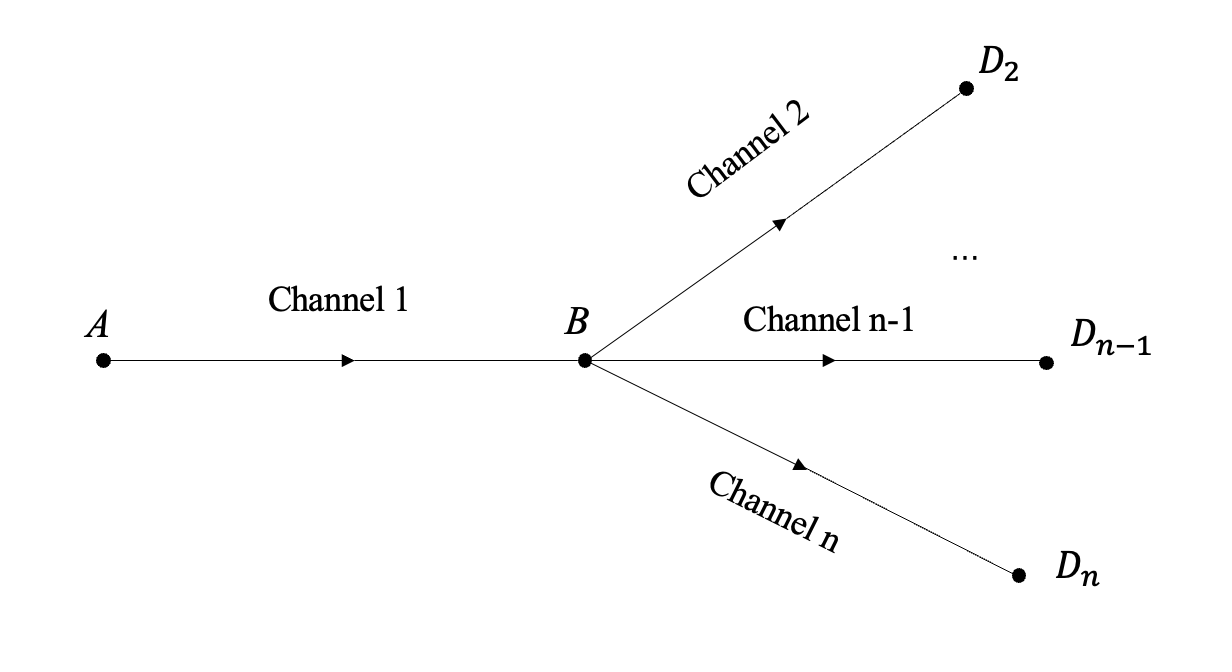}
		\caption{\textbf{Star-shaped model of divergent flow.} \label{fig:star}}
	\end{figure}
	Consider the Saint-Venant equations  with  different friction while maintaining horizontal orientation of each channels defined on $ [0,+\infty )\times[0, L_i]$ (here and hereafter, unless specified, we always assume that $i\in \{1,2,\cdots, n\}$) as follows
	
	\begin{equation}\label{sys01}
		\begin{aligned}
			&	\partial_{t} H_{i}+\partial_{x}(H_iV_i)=0,\\
			& \partial_{t}V_i +V_i\partial_{x}V_i+g\partial_{x}H_i+g\frac{C_iV_i^2}{\textcolor{black}{H_i^{p}}}=0,
		\end{aligned}
	\end{equation}

 where $L_i$ is the length of the channel $i$,
$H_i=H_i(t,x)$ is the height of the water,  $V_i=V_i(t,x)$ is the horizontal water velocity,  $C_i $ is the friction coefficient, \textcolor{black}{$p\geq 0$ is a parameter of the friction model,} $g$ is the gravitational acceleration. The model has $n$ simple nodes $A$, $D_2$, $\cdots$, $D_n$
 and one multiple node $B$ which serves as the junction between the main channel and the $n-1$ branches. To better approximate real-world scenarios and enhance the applicability when considering boundary conditions, we consider a constant imposed inflow at the starting node $A$ as in \cite{hayat2021exponential} and refrain from implementing any control at multiple node $B$, as controlling at multiple node $B$ is challenging in practical situations.
	At  nodes $A$, $B$, $D_j$ (here and hereafter, unless specified, we always assume that $j\in \{2,3, \cdots, n\}$), the boundary conditions are given as
	\begin{equation}
		\begin{aligned}\label{bou01}
			&	A: H_1(t,0)V_1(t,0)=Q_1 ,\\
			& B:  H_1(t,L_1)V_1(t,L_1)=\sum_{j=2}^{n} H_j(t,0)V_j(t,0),\\
			& \ \ \ \  \ H_1(t,L_1)=H_j(t,0),\\
			& D_j:V_j(t,L_j)=\mathscr{B}_j(H_j(t,L_j)),\\
		\end{aligned}
	\end{equation}
	where the control functions $\mathscr{B}_j$: $\mathbb{R} \rightarrow \mathbb{R}$ are of class $C^2$, and the flux $Q_1$ is a positive constant. The boundary conditions at multiple node $B$ are natural conditions,  which are respectively the conservation of mass and the continuity of the water pressure \cite{Herty2013}
    or water level \cite{LiNet2005}.\\

{\color{black}
\begin{rmk}[friction model]
Different choice of $p$ corresponds to different friction models. Our main theorems, Theorems \ref{thm0}-\ref{thm:single}, hold for any $p\geq 0$, in particular this framework covers two widely used models:
\begin{itemize}
    \item when $p=1$ this corresponds to the Chézy model \cite{BCBook,Chanson,Chaudhry} (see also \cite{bastin2011coron,BCA2009}). This model is sometimes written as $kV|V|/H$ where $k=gC$ is an adimensional constant \cite{BCBook}.
    \item when $p=4/3$ this corresponds to the Manning-Strickler model \cite[Equation (2.1)]{chertock2015well} (see also \cite{GugatLeugering2009}).
\end{itemize}
\end{rmk}
}
    
	For the system \eqref{sys01}, the steady-states $ (H_i^*(x),V_i^*(x))$ satisfy
	\begin{equation}
    \label{eq:steady0}
		\begin{aligned}
			&(H_i^* V_i^*)_x=0,\\
			&\Big(\frac{V_i^{*2}}{2}+gH_i^*  \Big)_x+g\frac{C_i V_i^{*2}}{\textcolor{black}{H_i^{*p}}}=0.
		\end{aligned}
	\end{equation}
	The first equation can be rewritten as
\begin{equation}\label{nve}
		\begin{aligned}
			H_i^* (V_i^*)_x+(H_i^*)_x V_i^*=0,
		\end{aligned}
	\end{equation}
In this paper, we consider the physical steady-states: $H^*_i (x) > 0$,  $V^*_i (x) > 0$
 which are in the subcritical case for all $x \in [0, L_i]$, i.e. the following conditions hold
	\begin{equation}\label{subcritical}
		 	gH_i^*(x)-V_i^{*2}(x)>0.
	\end{equation}
Under condition \eqref{subcritical} and by \eqref{nve}, \eqref{eq:steady0} means that
    \begin{equation}\label{steady-state}
		\begin{aligned}
			&	( H_i^*)_x=-\frac{gC_i V_i^{*2}}{\textcolor{black}{(H_i^{*})^{p-1}}(gH_i^*-V_i^{*2})},\\
			&( V_i^*)_x=\frac{gC_i V_i^{*3}}{\textcolor{black}{(H_i^{*})^{p}}(gH_i^*-V_i^{*2})}.
		\end{aligned}
	\end{equation}
Moreover,  from \eqref{bou01}
\begin{equation}
\begin{aligned}\label{bou01s}
			&	A: H_1^{*}(0)V_1^{*}(0)=Q_1 ,\\
			& B:  H_1^{*}(L_1)V_1^{*}(L_1)=\sum_{j=2}^{n} H_j^{*}(0)V_j^{*}(0),\\
			& \ \ \ \  \ H_1^{*}(L_1)=H_j^{*}(0),\\
			& D_j:V_j^{*}(L_j)=\mathscr{B}_j(H_j^{*}(L_j)).\\
		\end{aligned}
\end{equation}

 From equations \eqref{steady-state} and the subcritical condition \eqref{subcritical}, it can be observed that $H_i^*(x)$ is a monotonically decreasing function and $V_i^*(x)$ is a monotonically increasing function. In fact, it can be shown that, if $L_{i}$ is sufficiently large, these steady-states cease to exist \textcolor{black}{as classical solutions}. More precisely, there exists a series of critical points $x^{i}_0$ where
the solution to equations \eqref{steady-state} will blow up (see \cite{GugatLeugering2009,C122} for more details). For any given flux $Q_i>0$ such that $H^*_i(x)V^*_i(x)=Q_i$, looking at \eqref{steady-state},
$x^{i}_0$ should satisfy 
\begin{equation}\label{x0}
    g H_i^*(x^{i}_{0}) = V_i^{*2}(x^{i}_{0}) = \frac{Q_i^{2}}{H_i^{*2}(x^{i}_{0})},
\end{equation}
it follows that
\begin{equation}\label{dcr}
H_i^*(x_{0}^i) = \left(\frac{Q_i}{\sqrt{g}}\right)^{\frac{2}{3}}.
\end{equation}
Thus, in the following we always assume that the lengths $L_i$ are such that the steady-states exist on $[0,L_i]$, i.e. $L_{i}<x_{0}^{i}$.

 We first give the definition of  local exponential stability of the steady-states in the $H^2$-norm. 
\begin{defn}
    The steady-state $(H_i^*(x),V_i^*(x))$ of the system \eqref{sys01}, \eqref{bou01} 
    is (locally) exponentially stable for the $H^2$-norm if there exist $\delta>0$, $\nu>0$ and $C>0$ such that, for any initial data $(H_i^0(x),V_i^0(x))\in H^2((0,L_i);\mathbb{R}^2)$ satisfying
    \begin{equation}
  \|(H_i^0-H_i^*,V_i^0-V_i^*)\|_{H^2((0,L_i);\mathbb{R}^2)}<\delta
    \end{equation}
    and the first order compatibility conditions associated to the system \eqref{sys01} (see \cite[Section 4.5.2]{BCBook}), there exists a unique solution $(H_i,V_i)\in C^{0}([0,+\infty);H^2((0,L_i);\mathbb{R}^2))$ to the Cauchy problem \eqref{sys01}, \eqref{bou01} 
    with
    $H_i(0,\cdot)=H_i^0$, $V_i(0,\cdot)=V_i^0$
    and, in addition, it satisfies
    \begin{align}
\sum_{i=1}^{n}&\|(H_i(t,\cdot)-H_i^*,V_i(t,\cdot)-V_i^*)\|_{H^2((0,L_i);\mathbb{R}^2)}   \leq Ce^{-\nu t}\sum_{i=1}^{n}\|(H_i^0-H_i^*,V_i^0-V_i^* )\|_{H^2((0,L_i);\mathbb{R}^2)}.
    \end{align}
\end{defn}
One of our two main theorems is the following
\begin{thm}\label{thm0}
   For any boundary controls $\mathscr{B}_j$, $j\in\{2,\cdots,n\}$ satisfying 
   \begin{equation}\label{paraphy}
      \mathscr{B}'_j(H_j^*(L_j)) \in \mathbb{R}\backslash \left[ -\sqrt{\frac{g}{H^*_j(L_j)}}\frac{\lambda_j^++m_j\lambda_j^-}{\lambda_j^+-m_j\lambda_j^-}, -\sqrt{\frac{g}{H^*_j(L_j)}}\frac{\lambda_j^+-m_j\lambda_j^-}{\lambda_j^++m_j\lambda_j^-}\right],
   \end{equation}
where 
\begin{align}
\label{use001}
   \lambda_j^{\pm}=&\sqrt{gH^*_j(L_j)}\pm\frac{Q_j}{H^*_j(L_j)},\nonumber\\
    m_j=&\frac{\frac{1}{2}(H_j^{*}(L_j))^{\frac{\textcolor{black}{3+2p}}{2}}+\frac{\sqrt{g}}{\textcolor{black}{(3+p)}Q_i}(H_j^{*}(L_j))^{\textcolor{black}{3+p}}+\frac{\textcolor{black}{(1+p)}\sqrt{g}}{\textcolor{black}{2(3+p)}Q_i}(H_j^{*}(0))^{\textcolor{black}{3+p}}+\frac{Q_i}{2\sqrt{g}}((H^*_j(L_j))^{\textcolor{black}{p}}-(H^*_j(0))^{\textcolor{black}{p}})}{-\frac{1}{2}(H_j^{*}(L_j))^{\frac{\textcolor{black}{3+2p}}{2}}+\frac{\sqrt{g}}{\textcolor{black}{(3+p)}Q_i}(H_j^{*}(L_j))^{\textcolor{black}{3+p}}+\frac{\textcolor{black}{(1+p)}\sqrt{g}}{\textcolor{black}{2(3+p)}Q_i}(H_j^{*}(0))^{\textcolor{black}{3+p}}+\frac{Q_i}{2\sqrt{g}}((H^*_j(L_j))^{\textcolor{black}{p}}-(H^*_j(0))^{\textcolor{black}{p}})}
\end{align}
are all constants depending only on the values of $H^*_j(x)$ at two ends $x=0$ and $x=L_j$.
The (steady-state $(H^{*}_{i},V^{*}_{i})$ of the) nonlinear hyperbolic system \eqref{sys01}, \eqref{bou01} 
   is exponentially stable for the $H^2$-norm .
\end{thm}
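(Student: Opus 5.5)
We follow the standard Lyapunov route for $2\times 2$ density-velocity systems. First we linearize \eqref{sys01}, \eqref{bou01} around $(H_i^*,V_i^*)$ and pass to Riemann-type characteristic coordinates on each branch. On branch $i$, write $h_i=H_i-H_i^*$ and $v_i=V_i-V_i^*$, and set
\[
\xi_i^{\pm}=v_i\pm\sqrt{g/H_i^*}\,h_i ,
\]
which propagate with speeds $\lambda_i^{\pm}(x)=V_i^*\pm\sqrt{gH_i^*}$. Subcriticality \eqref{subcritical} makes $\xi_i^+$ enter at $x=0$ and $\xi_i^-$ enter at $x=L_i$. The linearized system then reads
\[
\partial_t\xi_i+\Lambda_i(x)\partial_x\xi_i+M_i(x)\xi_i=0 ,
\]
with $M_i$ explicit in terms of $C_i,p,H_i^*,V_i^*$.

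Next we rewrite the boundary conditions in these variables. At $A$, the condition $H_1^*v_1+V_1^*h_1=0$ gives $\xi_1^+(0)=k_A\,\xi_1^-(0)$. At $B$, continuity of $h$ together with conservation of flux gives $n$ linear relations. From these we solve for the incoming quantities $\xi_1^-(L_1)$ and $\xi_j^+(0)$ in terms of the outgoing ones $\xi_1^+(L_1)$ and $\xi_j^-(0)$. At $D_j$, the feedback gives $\xi_j^-(L_j)=k_j\,\xi_j^+(L_j)$, where
\[
k_j=\frac{\mathscr{B}_j'-\sqrt{g/H_j^*}}{\mathscr{B}_j'+\sqrt{g/H_j^*}}\cdot(\dots),
\]
and condition \eqref{paraphy} is exactly the statement $k_j^2<m_j\lambda_j^+/\lambda_j^-$ (up to the precise form), which is how \eqref{paraphy} will enter.

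We then look for a quadratic Lyapunov function
\[
V=\sum_i\int_0^{L_i}\Big(p_i^+(x)(\xi_i^+)^2+p_i^-(x)(\xi_i^-)^2\Big)\,dx ,
\]
with weights chosen as explicit functions of $H_i^*$. Differentiating along solutions gives $\dot V=-\text{(boundary terms)}-\text{(interior quadratic form)}$. The interior form is $\xi^T N_i\xi$ with
\[
N_i=-(\Lambda_ip_i)_x+p_iM_i+M_i^Tp_i .
\]
We need $N_i$ to be positive definite. For this we use the steady-state ODE \eqref{steady-state} to change variable from $x$ to $H^*$. The requirement becomes a differential inequality in the single variable $H^*$. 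A natural ansatz is to take $p_i^\pm$ as functions of $H^*$ whose ratio $p_i^-/p_i^+$ is a monotone explicit function. Integrating this from $H_i^*(L_i)$ to $H_i^*(0)$ should produce the expressions with powers $3+p$ and $(3+2p)/2$ that appear in $m_j$, so that
\[
m_j=\frac{p_j^-(L_j)}{p_j^+(L_j)}\Big/\frac{p_j^-(0)}{p_j^+(0)}
\]
after normalization. The crucial design point is that at the junction $B$ the weights must be matched across branches so that the boundary quadratic form there is nonpositive. We choose the normalization so that $p_1^\pm(L_1)$ and $p_j^\pm(0)$ are all proportional to the corresponding speed-weighted flux terms $\lambda^\pm$. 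The boundary form at $B$ then reduces to
\[
\sum (\text{energy flux out}) - (\text{energy flux in}) .
\]
Using $h$-continuity and flux conservation, this is a physical-energy-type identity which is $\le 0$; we would verify this by an explicit Cauchy–Schwarz computation. At $A$ we use the freedom in the weights on the main channel so that the weight ratio there satisfies $k_A^2 p_1^+(0)\lambda_1^+\le p_1^-(0)\lambda_1^-$. At $D_j$, condition \eqref{paraphy} gives $p_j^-\lambda_j^-k_j^2<p_j^+\lambda_j^+$, which makes the boundary term there negative.

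Finally, for the nonlinear $H^2$ statement we apply the same weights to $\xi$, $\partial_t\xi$ and $\partial_t^2\xi$, as in the classical framework of \cite{BCBook,bastin2011coron}. The nonlinear remainders are cubic in the $H^2$ norm, so we get $\dot V\le-\nu V+O(V^{3/2})$, which yields local exponential stability by the standard argument. We expect the main obstacle to be the construction of the weights. They must simultaneously make $N_i$ definite for arbitrary lengths, fit the junction energy matching, which pins the ratio at the start of each branch $j$, and still leave room for \eqref{paraphy} at $L_j$. The approach we would try first is to take $p^\pm=\lambda^{\mp}$-type weights (Riemann-invariant energy) multiplied by an explicit exponential-like function of $H^*$, solving the resulting Riccati-type inequality in closed form.
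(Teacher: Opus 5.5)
\textbf{Genuine gap: the construction of the branch weights.} You defer this step as ``the main obstacle'', but it is the actual content of the theorem, and the proposal does not supply it. Write the weights as $f_{1j}=\alpha_j\varphi_{1j}^2/(\lambda_{1j}\eta_j)$ and $f_{2j}=\alpha_j\varphi_{2j}^2\eta_j/\lambda_{2j}$. Then positive definiteness of $N_j$ is exactly the Riccati inequality $\eta_j'>\delta_{1j}\varphi_j/\lambda_{1j}+\gamma_{2j}\eta_j^2/(\lambda_{2j}\varphi_j)$, whose coefficients are positive. The junction forces $\lambda_{2j}f_{2j}\ge\lambda_{1j}f_{1j}$ at $x=0$, i.e.\ $\eta_j(0)\ge 1$. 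Riccati solutions of this type can blow up at finite $x$, and nothing in your outline shows they survive on $[0,L_j]$ for every admissible $L_j<x_0^j$.

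The paper handles this in three steps:
\begin{itemize}
\item It takes the explicit particular solution $\eta_{0j}=\frac{\lambda_{2j}}{\lambda_{1j}}\varphi_j$ of Lemma~\ref{etaresult}. This solution starts at $\lambda_{2j}(0)/\lambda_{1j}(0)<1$, which is the value the main channel uses.
\item It reduces the equation to a Bernoulli equation, which gives $\bar\eta_j$ in closed form.
\item In Lemmas~\ref{lem1}--\ref{lem2} it proves that the denominator stays positive up to $x_0^j$. This uses the steady-state ODE, the variable $d_j=H_j^*(x_0^j)/H_j^*(0)$, and a case analysis in $p$.
\end{itemize}
The closed form then collapses (Remark~\ref{remf}) to $\bar\eta_j=m_j(x)\eta_{0j}$. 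So $m_j$ is the ratio at $L_j$ of the two Riccati solutions started at $1$ and at $\lambda_{2j}(0)/\lambda_{1j}(0)$. It is not the ratio $\frac{p_j^-}{p_j^+}(L_j)\big/\frac{p_j^-}{p_j^+}(0)$ of a single weight, and a monotone ansatz in $H^*$ will not produce it without this computation.

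You also need Lemma~\ref{lem:cj}, namely $\bar\eta_j<\varphi_j$, proved by a comparison argument based on $\gamma_1/\lambda_1+\delta_2/\lambda_2>\delta_1/\lambda_1+\gamma_2/\lambda_2$. It gives $m_j\lambda_j^-<\lambda_j^+$, with $\lambda_j^\pm$ as in the statement (these are not your characteristic speeds). Only then is the complement of the interval in \eqref{paraphy} equivalent to $c_j^2>(m_j\lambda_j^-/\lambda_j^+)^2$. In your normalization this reads $k_j^2<(\lambda_j^+/(m_j\lambda_j^-))^2$, not the $k_j^2<m_j\lambda_j^+/\lambda_j^-$ you wrote.

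\textbf{The junction.} Your idea here is closer to correct than the justification you give. Suppose $\lambda_1f_1=\lambda_2f_2$ equals one common constant at $x=L_1$ on channel $1$ and at $x=0$ on every branch. Since $y_1^2-y_2^2=4\sqrt{g/H^*}\,hv$, the terms at $B$ then sum to a multiple of $\sqrt{g/H_1^*(L_1)}\,h_1(L_1)\bigl(v_1(L_1)-\sum_j v_j(0)\bigr)=0$. No Cauchy--Schwarz is involved. By contrast, the linearized physical energy $\int(gh^2+H^*v^2)$ has flux balance $Q_1(\sum_j v_j)^2-\sum_j Q_jv_j^2$ at such a node, which is indefinite.

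However, your identity gives zero rather than a definite form in $r=(v_2(0),\dots,v_n(0),h_1(L_1))$. The cubic boundary remainders at $B$ in your $H^2$ step involve $\partial_t^2\mathbf{y}$ at the node and cannot be controlled by interior norms, so they cannot be absorbed.

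The paper instead uses the free factors $\alpha_j$ to match only $W=\lambda_1f_1+\lambda_2f_2$ across the node, which kills the $h$--$v$ cross terms. It then takes $Z=\lambda_1f_1-\lambda_2f_2$ strictly positive at $L_1$ (from $\eta_1<\varphi_1$) and strictly negative at the start of each branch ($\eta_j(0)=1+\varepsilon$). This makes $\overline M$ positive definite. That strict inequality $\eta_j(0)>1$, forced by the multi-branch junction, lifts the branch weight above its natural starting value $\lambda_{2j}(0)/\lambda_{1j}(0)$. This is exactly why the non-blow-up result above is indispensable.
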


\begin{rmk}
This theorem and its counterpart for the tree-shaped network (Theorem \ref{thm0tree}) are surprising for two main reasons:
\begin{itemize}
\item It shows that there is no need to have a control at the junction. This is particularly striking for a full tree-shaped network (see Theorem \ref{thm0tree}) where there is no need to have controls inside the network to stabilize the full network: one only needs to have controls at the ends of the network. 
\item The (sufficient) boundary conditions for the stabilization are perfectly explicit and they only \textcolor{black}{depend} on the target height $H^{*}_j$ of the branches at $x=0$ and $x=L_j$. This was already the case for a single channel in \cite{hayat2021exponential}, however, there is still no simple explanation today as of why. \textcolor{black}{Note that, in \cite{hayat2021exponential}, the sufficient stability condition 
\textcolor{black}{for a single channel defined on $[0,L]$ when the system has a single control at $x=L$}
is
\begin{equation}
\mathcal{B}'(H^{*}(L)) \in \mathbb{R}\setminus \left[-\frac{g}{V^{*}(L)},-\frac{H^{*}(L)}{V^{*}(L)}\right],
\end{equation}
but} the Lyapunov function in \cite{hayat2021exponential} cannot be used in this case, precisely because of the boundary conditions imposed by the junction.
\end{itemize}    
\end{rmk}

\begin{rmk}\label{rem}
 In the special case where $V_{j_{0}}^{*}=0$ for some $j_{0}\in\{2,\cdots,n\}$, the result still holds with the condition on $\mathscr{B}'_{j_{0}}(H_{j_{0}}^*(L_{j_{0}}))$ being replaced by
\begin{equation}
      \mathscr{B}'_{j_{0}}(H_{j_{0}}^*(L_{j_{0}})) \in (0,+\infty).
   \end{equation}
\end{rmk}

\begin{rmk}[Number of controls]
One can note that this is a $2n\times 2n$ system with $n-1$ boundary controls. This number of control is optimal in the following sense: there are $n-1$ components that are propagating from $D_{j}$ to $B$. In the particular case where there is no friction, for any $j\in\{2,\cdots,n\}$ there is no source term for Channel $j$ and the component propagating in the direction $D_{j}$ to $B$ is not coupled to any other component except at the node $B$. Hence, its behavior cannot be influenced if there is no boundary control in $D_{j}$. \textcolor{black}{One can also look at \cite[Remark 3]{gugat2010stars} where the optimality of $n-1$ control was already observed in the case of a star-shaped system.}
\end{rmk}

\subsection{Tree-shaped Network}\label{tree}
\label{ssec:treeshape}
In this section, we consider a more general tree-shaped network. We intend to show that, in this framework as well, the full network can be controlled using only controls at the terminal simple nodes. We consider a network composed of $n$ channels indexed by $i\in\mathcal{I}=\{1,\cdots, n\}$ and with length $L_{i}$. 
Let $A$ denote the root node, that is the starting point of the network and label the trunk channel as $i=1$.
We assume that the network is  tree-shaped: all multiple nodes representing an internal junction in the network have
only one single incoming channel but possibly several outgoing channels (see Figure \ref{f2} for the illustration of a typical graph). 
 \begin{figure}[h!]
		\renewcommand{\captionfont}{\small}
		\centering
		\includegraphics[height=7cm]{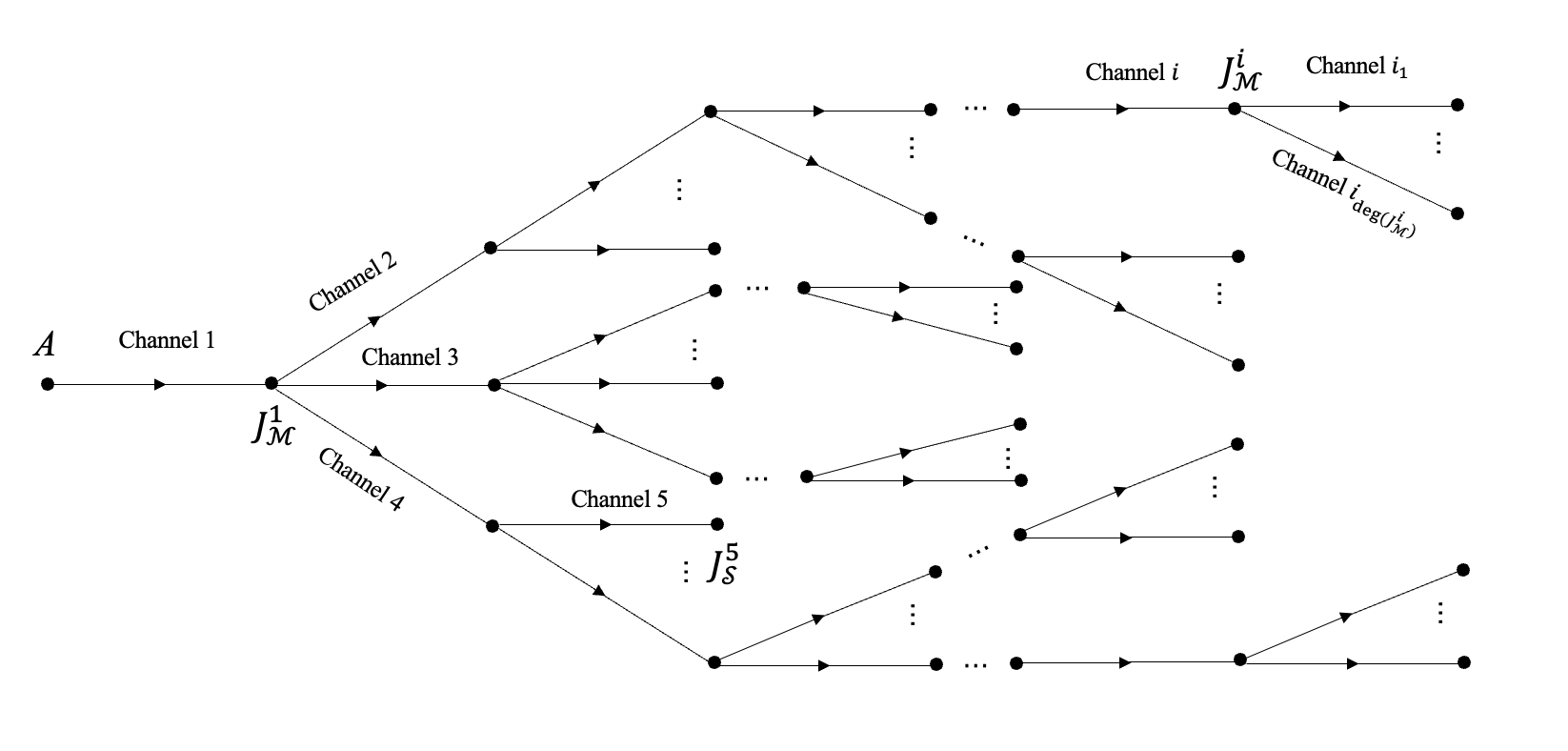}
		\caption{\textbf{Example diagram of a tree-shaped channels network model}. In this example, $2,3,4 \in \mathcal{I}_{out}^{1}$, $\text{deg}(J_{M}^{1})=4$, and $1$, $i \in \mathcal{M}$ and $5\in \mathcal{S}$.}\label{f2}
\end{figure}
We adapt the notation in \cite{Lizhuang2019} and denote these multiple nodes by $J^i_M$ where $i$ is the index of the incoming channel (i.e. the channel that ends at this node), and we denote
by
$\mathcal{I}^i_{out}$ the set of indices corresponding to outgoing channels at this multiple node (i.e. the channels connected with the incoming $i$-th channel). 
The total number of  channels connected with the multiple node $J^i_M$
is called the degree of this node, denoted by $\deg(J^i_M)$.
Correspondingly, we denote the simple nodes at the ends of the network (i.e. the terminal nodes of the network) by $J^i_S$ 
where $i$ is the index of the incoming channel (i.e. the channel that ends at this node). We also denote by $\mathcal{M}$ the set of indices of channels that end at a multiple node and by $\mathcal{S}$ the set of channels that end at a simple node.
The boundary conditions are given as follows
\begin{equation}
		\begin{aligned}\label{treebound}
			A:\	&H_1(t,0)V_1(t,0)=Q_1 ,\\
		  J^i_M:\ &  H_i(t,L_i)V_i(t,L_i)=\sum_{j\in \mathcal{I}^i_{out}} H_j(t,0)V_j(t,0),\quad i\in\mathcal{M},\\
			&  H_i(t,L_i)=H_j(t,0),\quad i\in\mathcal{M},\quad j\in \mathcal{I}^i_{out},\\
			J^i_S:\ & V_i(t,L_i)=\mathscr{B}_i(H_i(t,L_i)),\quad i\in\mathcal{S},\\
		\end{aligned}
	\end{equation}
where, as previously, the control functions $\mathscr{B}_i$: $\mathbb{R} \rightarrow \mathbb{R}$ are of class $C^2$
and are only applied to the end of the branches, the flux $Q_{1}$ is a positive constant and the boundary conditions at the multiple node $J_{M}^{i}$ are again the conservation of mass and the continuity of the water pressure or water level.

We emphasize that in contrast to the star-shaped model, where $H_{1}(t,0)V_{1}(t,0)$ is a constant, the flux $H_{j}(t,0)V_{j}(t,0)$ $(j\in\mathcal{I}^i_{out},i\in\mathcal{M})$ (i.e. the beginning of each outgoing channel that is connected with the $i$-th incoming channel)
is not necessarily constant 
but depends on 
other channels
and is subject to the natural physical condition, i.e. the conservation of mass and the continuity of the water pressure or water level (see \eqref{treebound}). Therefore the previous analysis for the star-shaped model does not directly transfer to this case. However, and in particular thanks to Lemma \ref{lem:cj} below, we can still show the following result

\begin{thm}\label{thm0tree}
   For any boundary controls $\mathscr{B}_i$ $(i\in\mathcal{S})$ satisfying 
      \begin{equation}\label{paraphyi}
      \mathscr{B}'_i(H_i^*(L_i)) \in \mathbb{R}\backslash \left[ -\sqrt{\frac{g}{H^*_i(L_i)}}\frac{\lambda_i^++m_i\lambda_i^-}{\lambda_i^+-m_i\lambda_i^-}, -\sqrt{\frac{g}{H^*_i\textcolor{black}{(}L_i)}}\frac{\lambda_i^+-m_i\lambda_i^-}{\lambda_i^++m_i\lambda_i^-}\right],
   \end{equation}
  where $\lambda_i^{\pm}$ and $m_i$ are defined by \eqref{use001} with the index $j$ being replaced by $i$,
   the (steady-state $(H^{*}_{i},V^{*}_{i})$ of the) nonlinear hyperbolic system \eqref{sys01}, 
   \eqref{treebound} is exponentially stable for the $H^2$-norm .
\end{thm}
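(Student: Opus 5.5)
We argue as for Theorem \ref{thm0}, constructing a quadratic Lyapunov function for the linearized system on the whole tree and then transferring to the nonlinear $H^2$ setting. The plan has four steps.

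\textbf{Step 1: diagonalize.} We linearize \eqref{sys01}, \eqref{treebound} around $(H_i^*,V_i^*)$ and pass, channel by channel, to the Riemann-type coordinates $\xi_i^{\pm}$ built from $h_i=H_i-H_i^*$ and $v_i=V_i-V_i^*$. These propagate with speeds $\lambda_i^{+}(x)>0$ and $-\lambda_i^{-}(x)<0$, by subcriticality \eqref{subcritical}.

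\textbf{Step 2: Lyapunov function.} On each channel we take the same weighted quadratic form as in the star-shaped proof,
\begin{equation*}
\mathcal{V}_i=\int_0^{L_i} \big(q_i^{+}(x)(\xi_i^{+})^2+q_i^{-}(x)(\xi_i^{-})^2\big)\,dx .
\end{equation*}
The weights are the explicit ones, expressed through $H_i^*$ and the flux $Q_i$, for which the interior term of $\frac{d}{dt}\mathcal{V}_i$ is negative definite for every length $L_i<x_0^i$. The total function is $\mathcal{V}=\sum_{i}c_i\mathcal{V}_i$, with positive constants $c_i$ to be chosen. The key point is that the interior dissipativity proved for a single channel holds for an arbitrary positive steady flux $Q_i$. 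Each $Q_i$ is constant along its channel and determined by the steady-state splitting at the junctions, so the interior estimate transfers verbatim to every channel of the tree.

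\textbf{Step 3: boundary terms.} The boundary terms are of three kinds, handled separately.
\begin{itemize}
\item At the root $A$, the condition $h_1V_1^*+H_1^*v_1=0$ expresses $\xi_1^{+}(0)$ as a multiple of $\xi_1^{-}(0)$, and that term is treated exactly as in the star case.
\item At each terminal node $J_S^i$, the control law gives $\xi_i^{-}(L_i)=k_i\xi_i^{+}(L_i)$. Condition \eqref{paraphyi} is exactly the statement that $k_i^2<\frac{q_i^{+}\lambda_i^{+}}{q_i^{-}\lambda_i^{-}}(L_i)$, which reduces to the condition involving $m_i$ once the explicit weights are evaluated. So each terminal boundary term is negative.
\item At each internal junction $J_M^i$, the linearized conditions express the incoming characteristics (namely $\xi_i^{-}(L_i)$ and $\xi_j^{+}(0)$, $j\in\mathcal{I}_{out}^i$) in terms of the outgoing ones ($\xi_i^{+}(L_i)$ and $\xi_j^{-}(0)$). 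We need the quadratic form $c_iq_i^{+}\lambda_i^{+}(\xi_i^{+})^2-c_iq_i^{-}\lambda_i^{-}(\xi_i^{-})^2$ at $L_i$ minus $\sum_j c_j(\dots)$ at $0$ to be nonpositive.
\end{itemize}

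\textbf{Step 4: the junction step (main obstacle).} In the star case the junction was handled because the flux at $A$ is fixed. Here the flux entering $J_M^i$ fluctuates, so the argument must be organized differently. I would invoke Lemma \ref{lem:cj}, which I expect provides, for a single junction, positive constants $c_j$ (relative to $c_i$) making this junction form nonpositive, presumably using that the energy-type flux $\sum(\text{mass flux})\times(\text{head})$ is conserved at the node. Because the graph is a tree, the constants can then be fixed recursively from the root: choose $c_1=1$, then at each junction choose the $c_j$ of the outgoing channels by Lemma \ref{lem:cj}, and proceed downward. No cycle constraint arises, since each channel has exactly one parent. The delicate point is checking that Lemma \ref{lem:cj} applies with whatever $c_i$ was inherited from the level above, that is, that it is scale invariant. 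I expect it is, since all constraints are homogeneous in $(c_i,c_j)$.

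\textbf{Conclusion.} This gives $\frac{d}{dt}\mathcal{V}\le-\gamma\mathcal{V}$ for the linearized system. For the nonlinear system we repeat the construction for $\partial_t\xi$ and $\partial_t^2\xi$, with the same weights perturbed by $O(|\xi|)$, exactly as in Section \ref{sec:nonlinear}. The junction and terminal conditions differentiated in time keep the same linear structure up to quadratic remainders, so they yield $H^2$ exponential stability for a sufficiently small $\delta$.
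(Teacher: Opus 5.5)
Your Step 4 is where the theorem is actually decided, and it is not proved. You guess that Lemma \ref{lem:cj} supplies scaling constants that make each junction form nonpositive through conservation of an energy-type flux, but that is not what the lemma says.

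Scalings alone cannot do the job. Write the contribution of $J^i_M$ in the variables $r_i=(v_{i_2}(0),\dots,v_{i_{\deg(J^i_M)}}(0),h_i(L_i))$. Each endpoint contributes $Z(v^2+\frac{g}{H^*}h^2)+2W\sqrt{g/H^*}\,vh$, with $Z=\lambda_1f_1-\lambda_2f_2$ and $W=\lambda_1f_1+\lambda_2f_2$.

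Your ``mass flux times head'' intuition is correct only for the cross terms $vh$. Since $h$ is continuous and $v_i(L_i)=\sum_l v_{i_l}(0)$, these cancel once the scalings satisfy $W_{i_l}(0)=W_i(L_i)$; this is the paper's $\omega_{i_l}=0$. Your recursive root-to-leaves choice is the consistent way to impose this on a tree, and homogeneity is indeed why it propagates. What remains is
\begin{equation*}
Z_i(L_i)\Big(\sum_{l}v_{i_l}(0)\Big)^2-\sum_{l}Z_{i_l}(0)\,v_{i_l}(0)^2+\frac{g}{H_i^*(L_i)}\Big(Z_i(L_i)-\sum_{l}Z_{i_l}(0)\Big)h_i(L_i)^2 .
\end{equation*}
Its definiteness hinges on the signs of the weights at the two ends of each channel, and no positive rescaling can change those signs. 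For instance, at a node with two outgoing channels, taking $v_{i_2}(0)=-v_{i_3}(0)$ and everything else zero leaves $-(Z_{i_2}(0)+Z_{i_3}(0))v_{i_2}(0)^2$.

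The paper uses two sign conditions:
\begin{itemize}
\item $Z_{i_l}(0)<0$, i.e.\ $\eta_{i_l}(0)>1=\varphi_{i_l}(0)$;
\item $Z_i(L_i)>0$, i.e.\ $\eta_i(L_i)<\varphi_i(L_i)$.
\end{itemize}
Your scale-invariance remark shows that the admissible set of scalings is a cone. It does not show that this cone is nonempty.

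The genuinely new difficulty of the tree is that every internal channel $i\in\mathcal M\setminus\{1\}$ must satisfy both conditions with the same weight. It needs $\eta_i(0)=1+\varepsilon>\varphi_i(0)$ because it is outgoing at its upstream junction. It also needs $\eta_i(L_i)<\varphi_i(L_i)$ because it is incoming at its downstream junction. Meanwhile $\eta_i$ must obey the Riccati inequality required for interior dissipativity.

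In the star this issue never arises, and that is the real role of the fixed flux at $A$ (not the handling of the junction itself). It lets the trunk start at $\eta_1(0)=\lambda_{21}(0)/\lambda_{11}(0)+\varepsilon<1$. Then $\eta_1\approx\frac{\lambda_{21}}{\lambda_{11}}\varphi_1<\varphi_1$, so $Z_1(L_1)>0$ is automatic.

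Lemma \ref{lem:cj} states exactly that the required crossing occurs: $\varphi_i>\bar\eta_i$ on $(0,x_0^i)$, even though $\bar\eta_i(0)=\varphi_i(0)=1$. It is proved by comparing $\varphi_i'=(\gamma_{1i}/\lambda_{1i}+\delta_{2i}/\lambda_{2i})\varphi_i$ with $\bar\eta_i'\le(\delta_{1i}/\lambda_{1i}+\gamma_{2i}/\lambda_{2i})\varphi_i$, which holds while $\bar\eta_i\le\varphi_i$. Friction makes the difference of these rates positive. Continuity in $\varepsilon$ then gives $Z_i(L_i)>0$.

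Relatedly, your Step 2 must fix the weight profiles, not only their scalings. Take $\eta_j(0)=1+\varepsilon$ on every non-root channel. This is also what makes the terminal condition come out in terms of $m_i$, through $\bar\eta_i=m_i\eta_{0i}$. Take $\varepsilon>0$ to get strict interior dissipation.

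Without these sign conditions and the comparison lemma, Step 4 has no proof, so your argument does not establish the theorem.
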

Moreover, Remark \ref{rem} still holds for the tree-shaped network.

\subsection{A side result for a single channel}
Theorems \ref{thm0} and \ref{thm0tree} are obtained thanks to an analysis of the network and a Lyapunov approach. As it turns out, the new Lyapunov function we derive can also improve the best known conditions for the stabilization of a single channel currently given in \cite{BC2017,hayat2021exponential} by providing sufficient conditions that are not included in the sufficient conditions of \cite{BC2017,hayat2021exponential}. Indeed, consider the system
	\begin{equation}\label{sys01one}
		\begin{aligned}
			&	\partial_{t} H+\partial_{x}(HV)=0,\\
			& \partial_{t}V +V\partial_{x}V+g\partial_{x}H+g\frac{CV^2}{H}=0,
		\end{aligned}
	\end{equation}
    on $[0,L]$ with boundary conditions
    \begin{equation}
    \label{eq:boundone}
        V(t,0) = \mathcal{G}_{1}(H(t,0)),\;\; V(t,L) = \mathcal{G}_{2}(H(t,L)),
    \end{equation}
     where the control functions $\mathcal{G}_{1}$ and $\mathcal{G}_{2}$: $\mathbb{R} \rightarrow \mathbb{R}$ are of class $C^2$ and consider a steady-state $(H^{*}(x),V^{*}(x))$ with $gH^{*}(x)-V^{*2}(x)>0$. We have the following theorem
\begin{thm}
\label{thm:single}
For any boundary controls $\mathcal{G}_{1}$ and $\mathcal{G}_{2}$ such that
    \begin{equation}
    \label{eq:condsingle}
        \mathcal{G}_{1}'(H^{*}(0))\in(-\infty,0],\;\;\mathcal{G}_{2}'(H^{*}(L))\in \mathbb{R}\backslash \left[ -\sqrt{\frac{g}{H^*(L)}}\frac{\lambda^++m\lambda^-}{\lambda^+-m\lambda^-}, -\sqrt{\frac{g}{H^*(L)}}\frac{\lambda^+-m\lambda^-}{\lambda^++m\lambda^-}\right],
    \end{equation}
where $\lambda^{\pm}$ and $m$ is given by \eqref{use001} with $H^{*}$ instead of $H_{j}^{*}$, the (steady-state $(H^*,V^*)$ of the) nonlinear system \eqref{sys01one}, \eqref{eq:boundone} is exponentially stable for the $H^{2}$-norm.
\end{thm}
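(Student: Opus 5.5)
The plan is to run the same Lyapunov argument used for the star-shaped network in Theorem \ref{thm0}, restricted to one channel, and to show that the boundary condition at $x=0$ gives a boundary term with the right sign whenever $\mathcal{G}_1'(H^*(0))\le 0$.

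\textbf{Step 1: linearize and diagonalize.} Linearizing \eqref{sys01one}, \eqref{eq:boundone} around $(H^*,V^*)$ gives $h=H-H^*$ and $v=V-V^*$. We pass to the characteristic (Riemann-type) coordinates
\[
\xi_\pm = v \pm \sqrt{g/H^*}\,h ,
\]
with speeds $\lambda_\pm(x)=V^*\pm\sqrt{gH^*}$. Subcriticality gives $\lambda_+>0>\lambda_-$. The system becomes
\[
\partial_t\xi+\Lambda(x)\partial_x\xi+M(x)\xi=0,
\]
where $M$ comes from the friction term and from the $x$-dependence of $H^*$. At $x=L$ the boundary condition reads $\xi_-(t,L)=k_L\,\xi_+(t,L)$, with $k_L$ an explicit Möbius function of $\mathcal{G}_2'(H^*(L))$. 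At $x=0$ it reads $\xi_+(t,0)=k_0\,\xi_-(t,0)$.

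\textbf{Step 2: build the Lyapunov function.} We take the Lyapunov function constructed in Section \ref{sec:lyap} for a terminal branch:
\[
\mathcal{V}=\int_0^L f_+(x)\,\xi_+^2+f_-(x)\,\xi_-^2\,dx ,
\]
with the explicit weights $f_\pm$. These are built so that the interior dissipation matrix is positive definite for any length $L<x_0$. That construction is where $m$ in \eqref{use001} comes from: $m$ is the ratio $f_-(L)|\lambda_-(L)|/(f_+(L)\lambda_+(L))$, suitably normalized. The interior estimate uses only the steady-state ODE \eqref{steady-state} and never the junction conditions, so it transfers verbatim with $p=1$.

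Differentiating along solutions gives
\[
\frac{d\mathcal{V}}{dt}=-\int_0^L \xi^\top N\xi\,dx+\mathcal{B}(L)+\mathcal{B}(0),
\]
with
\[
\mathcal{B}(L)=-\bigl(f_+\lambda_+\xi_+^2-f_-|\lambda_-|\xi_-^2\bigr)(L),\qquad \mathcal{B}(0)=\bigl(f_+\lambda_+\xi_+^2-f_-|\lambda_-|\xi_-^2\bigr)(0).
\]

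\textbf{Step 3: the boundary at $x=L$.} The condition $\mathcal{B}(L)\le 0$, with strict inequality giving some margin, is exactly the inequality
\[
k_L^2\,f_-(L)|\lambda_-(L)|<f_+(L)\lambda_+(L).
\]
Solving it for $\mathcal{G}_2'$ is the computation already done for \eqref{paraphy}, and it yields the excluded interval in \eqref{eq:condsingle}.

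\textbf{Step 4: the boundary at $x=0$ (main obstacle).} In the star network the left end of a branch is governed by the junction, and that term is absorbed through the network analysis. Here we need directly
\[
k_0^2\,f_+(0)\lambda_+(0)\le f_-(0)|\lambda_-(0)|.
\]
A sign condition $\mathcal{G}_1'\le0$ gives $|k_0|\le1$. Indeed,
\[
k_0=\frac{\mathcal{G}_1'-\sqrt{g/H^*(0)}}{\mathcal{G}_1'+\sqrt{g/H^*(0)}}
\]
up to sign convention, which lies in $[-1,1]$ exactly when $\mathcal{G}_1'$ is outside the open interval on one side. I would check the sign convention carefully. It then remains to verify
\[
f_+(0)\lambda_+(0)\le f_-(0)|\lambda_-(0)|.
\]
I expect this to hold because the weights of Section \ref{sec:lyap} are normalized at $x=0$ precisely to make the junction term, which has the structure of a reflection with coefficient in $[-1,1]$ (the case $Q_1$ constant corresponds to $k_0$ with $|k_0|\le1$), nonpositive. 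The value $\mathcal{G}_1'=0$ is the borderline of that inequality. If equality holds at $x=0$, then $|k_0|\le1$ suffices. If instead the weights only satisfy the inequality for the specific junction reflection coefficient, I would modify $f_+$ near $0$ by a factor $e^{-\varepsilon x}$, which keeps interior definiteness for small $\varepsilon$.

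\textbf{Step 5: nonlinear system.} To pass to the nonlinear system I follow Section \ref{sec:nonlinear}. I apply the same weights to $\xi$, $\partial_t\xi$ and $\partial_t^2\xi$ to get an $H^2$ Lyapunov function, whose strict decay is preserved for small $H^2$ perturbations (as in \cite{BCBook}). This gives exponential stability.
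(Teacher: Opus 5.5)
Your route is the paper's: you use the terminal-branch weights $f_+=\alpha\varphi_1^2/(\lambda_+\eta)$ and $f_-=\alpha\varphi_2^2\eta/|\lambda_-|$ with $\eta(0)=1+\varepsilon$ as in \eqref{eta0j}, reuse the $x=L$ analysis verbatim, and reduce the $x=0$ term to $|k_0|\le 1$. The gap is the inequality you single out as the main obstacle, $f_+(0)\lambda_+(0)\le f_-(0)|\lambda_-(0)|$. You only ``expect'' it, and the reasons you give are not the right ones.

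It is in fact immediate from \eqref{def02}. Since $\varphi_1(0)=\varphi_2(0)=1$, one has $f_+(0)\lambda_+(0)=\alpha/\eta(0)$ and $f_-(0)|\lambda_-(0)|=\alpha\eta(0)$. Hence, for every $|k_0|\le 1$,
\[
\mathcal{B}(0)=\alpha\Bigl(\frac{k_0^2}{1+\varepsilon}-(1+\varepsilon)\Bigr)\xi_-^2(t,0)\le-\alpha\,\frac{\varepsilon(2+\varepsilon)}{1+\varepsilon}\,\xi_-^2(t,0).
\]
This is the same fact $Z_j(0)<0\Leftrightarrow\eta_j(0)>1$ of \eqref{equiv} used at the junction, and it is exactly how the paper concludes. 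The strict margin, uniform in $k_0$, also matters for the nonlinear $H^2$ step. Your ``equality at $x=0$'' scenario would leave a zero boundary form at $\mathcal{G}_1'=0$, with nothing left to absorb the cubic boundary remainders. With the right weights that scenario never arises.

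Several side remarks in your Step 4 are off.
\begin{itemize}
\item Your parenthetical about the constant-flux node $A$ points to the wrong normalization. There the paper uses the trunk choice $\eta_1(0)=\lambda_{21}(0)/\lambda_{11}(0)+\varepsilon<1$, for which $f_+(0)\lambda_+(0)>f_-(0)|\lambda_-(0)|$, the opposite of what you need. It works at $A$ only because the reflection coefficient there is exactly $-\lambda_{21}(0)/\lambda_{11}(0)$. Those weights would also give $m=1$ at $x=L$.
\item Your fallback does nothing. The factor $e^{-\varepsilon x}$ equals $1$ at $x=0$, so the boundary ratio is unchanged. Within the weights \eqref{def02} that ratio is exactly $1/\eta(0)^2$, so changing it means changing $\eta(0)$. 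That moves $\bar\eta$ and hence $m$: the two conditions in \eqref{eq:condsingle} are coupled precisely through the normalization $\bar\eta(0)=1$.
\item Your displayed $k_0$ is inverted. From $v=\mathcal{G}_1'h$ one gets $\xi_+(t,0)=\frac{\mathcal{G}_1'+\sqrt{g/H^*(0)}}{\mathcal{G}_1'-\sqrt{g/H^*(0)}}\,\xi_-(t,0)$, whose modulus is $\le 1$ if and only if $\mathcal{G}_1'\le 0$. The expression you wrote has modulus $\ge 1$ on that range.
\end{itemize}
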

\begin{rmk}
    Note that the condition on the left $\mathcal{G}_{1}'(H^{*}(0))\in(-\infty,0]$ is very permissive and much weaker than the conditions known so far, \textcolor{black}{that are
    \begin{itemize}
        \item $\mathcal{G}_{1}'(H^{*}(0))\in [-\frac{g}{V^{*}(0)},-\frac{V^{*}(0)}{H^{*}(0)}]$ derived in \cite{hayat2021exponential};
        \item $\mathcal{G}_{1}'(H^{*}(0))\in [-\frac{g}{V^{*}(0)}-\sqrt{\frac{g^{2}}{V^{*}(0)^{2}}-\frac{g}{H^{*}(0)}},-\frac{g}{V^{*}(0)}+\sqrt{\frac{g^{2}}{V^{*}(0)^{2}}-\frac{g}{H^{*}(0)}}]$ derived in \cite{BC2017}.
    \end{itemize}
    Note that all these conditions coincide in the limit $V^{*}(0)\rightarrow 0$.
    }
\end{rmk}
The proof of Theorem \ref{thm:single} is shown in Appendix \ref{app:single}.

\section{A Lyapunov Function for the Linearized  System of the Star-shaped Model}
\label{sec:lyap}
 To linearize the system \eqref{sys01}--\eqref{bou01}, we introduce the disturbances $\left(h_i(t,x), v_i(t,x)\right)$ as
 \begin{equation}\label{disturbance}
		h_i(t,x)=H_i(t,x)-H_i^*(x), \ \  v_i(t,x)=V_i(t,x)-V_i^*(x),
	\end{equation}
 then the  linearized system of \eqref{sys01}  is
	\begin{equation}\label{sysl01}
		\begin{aligned}
			&\partial_{t}h_i +V_i^*\partial_{x}h_i +H_i^*\partial_{x}v_i +(\partial_{x}V_i^*)h_i +(\partial_{x}H_i^*)v_i=0,\\
			&\partial_{t}v_i+g\partial_{x}h_i+V_i^*\partial_{x}v_i-gC_i\frac{V_i^{*2}}{H_i^{*2}}h_i+\left( \partial_{x}V_i^*+2gC_i\frac{V_i^*}{H_i^*}\right)v_i=0,
		\end{aligned}
	\end{equation}
	and the boundary conditions \eqref{bou01} become
\begin{equation}
	\begin{aligned}\label{boul01}
			&	A: h_1(t,0)=-\frac{H_1^*(0)}{V_1^*(0)}v_1(t,0),\\
			& B: 
   v_1(t,L_1)=\sum_{j=2}^{n}v_j(t,0),\\
			& \ \ \ \  \ h_1(t,L_1)=h_j(t,0),\\
			& D_j:v_j(t,L_j)=k_jh_j(t,L_j),\\
		\end{aligned}
	\end{equation}
	here, $k_j=\mathcal{B}_j'(H^*_j(L_j))$ 
 represent the tuning parameters to be adjusted. For any initial condition $(h_i^0(x),v_i^0(x)) \in L^2((0,L_i);\mathbb{R}^2)$, the Cauchy problem \eqref{sysl01}--\eqref{boul01} with
	\begin{equation}\label{ini}
		h_i(0,x)=h_i^0(x), \ \ v_i(0,x)=v_i^0(x)
	\end{equation}
 is well-posed and has a unique solution (in $C^{0}([0,+\infty);\prod\limits_{i=1}^{n}L^{2}((0,L_{i}),\mathbb{R}^{2}))$) (see \cite[Appendix A-B]{BCBook}). Moreover, we have

	\begin{thm}\label{thm1}
  If the control parameters $k_j$, $j\in\{2,3,\cdots,n\}$ satisfy the condition
\begin{equation}
  \label{eq:hypki}
      k_j \in \mathbb{R}\backslash \left[ -\sqrt{\frac{g}{H^*_j(L_j)}}\frac{\varphi_j(L_j)+\bar\eta_j(L_j)}{\varphi_j(L_j)-\bar\eta_j(L_j)}, -\sqrt{\frac{g}{H^*_j(L_j)}}\frac{\varphi_j(L_j)-\bar\eta_j(L_j)}{\varphi_j(L_j)+\bar\eta_j(L_j)}\right],
   \end{equation}
  where $\varphi_j$ and $\bar\eta_j$ are defined explicitly by \eqref{varphii} and \eqref{exbareta} below, then the system \eqref{sysl01}--\eqref{boul01} is exponentially stable for the $L^2$-norm.
	\end{thm}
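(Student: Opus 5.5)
We prove Theorem \ref{thm1} with a quadratic Lyapunov functional written in Riemann-type coordinates on each branch. On channel $i$ we introduce
\[
\xi_i^{\pm}=v_i\pm\sqrt{\frac{g}{H_i^*}}\,h_i,
\]
which propagate with speeds $\lambda_i^{\pm}(x)=V_i^*\pm\sqrt{gH_i^*}$. Then \eqref{sysl01} becomes $\partial_t\xi_i+\Lambda_i\partial_x\xi_i+M_i(x)\xi_i=0$, where $M_i$ is explicit in terms of $C_i$, $H_i^*$ and $V_i^*$ through \eqref{steady-state}. We look for
\[
\mathcal{V}(t)=\sum_{i=1}^n\int_0^{L_i}\Big(\varphi_i(x)\,(\xi_i^+)^2+\psi_i(x)\,(\xi_i^-)^2+2\eta_i(x)\,\xi_i^+\xi_i^-\Big)\,dx .
\]
The weights are positive and the cross term is allowed, consistent with \eqref{varphii} and \eqref{exbareta}. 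Positivity of $\mathcal{V}$ requires $\eta_i^2<\varphi_i\psi_i$. Differentiating along solutions and integrating by parts gives
\[
\dot{\mathcal{V}}=-\sum_i\int_0^{L_i}\xi_i^{T}\mathcal{N}_i\,\xi_i\,dx\;-\;\Big[\text{boundary terms}\Big],
\]
and the goal is to show $\mathcal{N}_i$ is uniformly positive definite while the boundary terms are nonpositive. That yields $\dot{\mathcal{V}}\le-\mu\mathcal{V}$, and exponential stability in $L^2$ follows, with the classical density argument on the well-posed solutions of \cite{BCBook}.

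\textbf{Interior step.} Here I would not use the Lyapunov function of \cite{hayat2021exponential}; its boundary weights are tied to a particular boundary relation that is incompatible with the junction. Instead I impose first-order ODEs on $(\varphi_i,\psi_i,\eta_i)$ that make $\mathcal{N}_i$ diagonal, or at least diagonally dominant, with positive entries. The explicit steady-state relations \eqref{steady-state} and $H_i^*V_i^*=Q_i$ allow these ODEs to be integrated in closed form as functions of $H_i^*(x)$. This is what produces the powers $(H_i^*)^{3+p}$, $(H_i^*)^{(3+2p)/2}$ and the endpoint-dependent constant $m_j$ in \eqref{use001}: $m_j$ is essentially the ratio $(\varphi_j+\bar\eta_j)/(\varphi_j-\bar\eta_j)$ at $x=L_j$. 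On the trunk I expect a plain diagonal choice with $\varphi_1/\psi_1$ of Bastin--Coron type to suffice.

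\textbf{Boundary step.} I treat the three kinds of node in turn.
\begin{itemize}
\item At $A$, the condition $h_1=-(H_1^*/V_1^*)v_1$ gives $\xi_1^+(0)=\kappa\,\xi_1^-(0)$ with $|\kappa|\le1$ by subcriticality. The boundary term $\lambda^+\varphi_1(\xi_1^+)^2-|\lambda^-|\psi_1(\xi_1^-)^2$ is then made nonpositive by taking $\psi_1(0)$ large relative to $\varphi_1(0)$.
\item At $D_j$, $v_j=k_jh_j$ expresses $\xi_j^-$ as $\rho_j\xi_j^+$. The outgoing term is the quadratic form $\lambda_j^+\varphi_j+2(\cdots)\eta_j\rho_j-\lambda_j^-\psi_j\rho_j^2$, evaluated at $L_j$. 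Requiring it to be negative gives exactly the exclusion of an interval in $k_j$, namely \eqref{eq:hypki}, which reduces to \eqref{paraphy} after substituting the explicit weights.
\item At $B$, continuity of $h$ and conservation of $v$ let me write the incoming values $(\xi_1^-(L_1),\xi_j^+(0))$ as linear combinations of the outgoing values $(\xi_1^+(L_1),\xi_j^-(0))$.
\end{itemize}

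The main obstacle is the junction $B$. There the boundary term is a quadratic form in $n$ variables, and it must be nonpositive with no control available. My approach is to exploit the remaining freedom, namely positive multiplicative constants $c_i$ in front of each branch's weights (the paper's Lemma \ref{lem:cj} suggests this). I would pick the weights at $x=0$ on the branches proportional to the trunk weight at $L_1$, in such a way that, using $h_1=h_j$ and $v_1=\sum v_j$, the form becomes $-\alpha\,h_1^2-\sum_j\beta_jv_j^2$-like, with the mixed terms cancelling by Cauchy--Schwarz. Concretely, I would work in $(h,v)$ variables, where the flux term is $\int\partial_x(\text{weights}\cdot(\text{energy flux}))$. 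I would then choose the weights so that the energy-flux density at the node is $a\,h\,v+b\,v^2+c\,h^2$, with the same $a$ on all branches; conservation of $v$ then makes the $h\,v$ terms sum to zero. Compatibility of this normalization with the ODEs fixed in the interior step is what forces the specific $\bar\eta_j$, and I would verify that positivity survives along $[0,L_j]$ using monotonicity of $H_j^*$.
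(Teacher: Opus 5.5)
Your proposal has two genuine gaps: the functional you differentiate is not admissible, and the junction analysis stops before the step that carries the proof.

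\textbf{The cross term is not admissible.} Your functional contains $2\eta_i\xi_i^+\xi_i^-$, and $\xi_i^\pm$ travel at different speeds (your $\lambda_i^\pm=V_i^*\pm\sqrt{gH_i^*}$). Differentiating gives
\[
\frac{d}{dt}\int_0^{L_i}2\eta_i\xi_i^+\xi_i^-\,dx=-2\int_0^{L_i}\eta_i\big(\lambda_i^+\xi_i^-\partial_x\xi_i^++\lambda_i^-\xi_i^+\partial_x\xi_i^-\big)\,dx+(\text{zeroth order terms}).
\]
Only the part $\eta_i\tfrac12(\lambda_i^++\lambda_i^-)\partial_x(\xi_i^+\xi_i^-)$ integrates by parts into boundary terms. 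The remainder $-\int\eta_i(\lambda_i^+-\lambda_i^-)(\xi_i^-\partial_x\xi_i^+-\xi_i^+\partial_x\xi_i^-)\,dx$ is a first-order term with no sign, and it is not bounded by $\|\xi_i\|_{L^2}^2$.

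So your identity $\dot{\mathcal V}=-\int\xi^T\mathcal N\xi-[\cdots]$ is false unless $\eta_i\equiv0$. An $L^2$ Lyapunov functional here must be diagonal in the Riemann coordinates. This comes from misreading the statement: $\varphi_j$ and $\bar\eta_j$ are not weights of a quadratic form.
\begin{itemize}
\item In the paper, $\varphi_j=\varphi_{1j}/\varphi_{2j}$ is a ratio of integrating factors.
\item $\bar\eta_j$ solves the Riccati equation \eqref{bareta}. An $\varepsilon$-perturbation $\eta_j$ of it parametrizes the diagonal weights $f_{1j}=\alpha_j\varphi_{1j}^2/(\lambda_{1j}\eta_j)$ and $f_{2j}=\alpha_j\varphi_{2j}^2\eta_j/\lambda_{2j}$, chosen precisely so that $\det N_j>0$.
\item A ``diagonal $\mathcal N_i$'' is not available: the off-diagonal entry $f_{1j}\delta_{1j}+f_{2j}\gamma_{2j}$ is always positive.
\item The excluded interval at $D_j$ comes from $c_j^2>\eta_j^2(L_j)/\varphi_j^2(L_j)$, with no cross term.
\item $m_j=\bar\eta_j(L_j)/\eta_{0j}(L_j)$, not the ratio you wrote.
\end{itemize}

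\textbf{The junction: you stop before the real difficulty.} Your idea is right as far as it goes. Equalizing the $hv$ coefficient across the node so that $v_1=\sum_j v_j$ cancels it is exactly the paper's choice $\alpha_j=\widetilde W_1(L_1)/\widetilde W_j(0)$, which gives $\omega_j=0$. What remains is
\[
Z_1(L_1)\Big(\sum_j v_j\Big)^2-\sum_j Z_j(0)v_j^2+\frac{g}{H_1^*(L_1)}\Big(Z_1(L_1)-\sum_j Z_j(0)\Big)h_1^2 .
\]
Positivity on $\{\sum_j v_j=0\}$ already forces the $Z_j(0)$ to be negative for all but at most one $j$. The paper uses two sign conditions:
\begin{itemize}
\item $Z_j(0)<0$ on every branch, i.e.\ $\eta_j(0)>1$;
\item $Z_1(L_1)>0$ on the trunk, i.e.\ $\eta_1(L_1)<\varphi_1(L_1)$.
\end{itemize}

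The first condition is where the work lies. The explicit solution $\eta_{0j}$ starts at $\lambda_{2j}(0)/\lambda_{1j}(0)<1$. A Riccati solution started at $1$ instead can blow up at finite $x$. The core of the proof is therefore:
\begin{itemize}
\item Lemmas \ref{lem1}--\ref{lem2}: an integral inequality against $\lambda_{1j}(0)/(\lambda_{1j}(0)-\lambda_{2j}(0))$ showing that $\bar\eta_j$ exists up to the critical point $x_0^j$.
\item Lemma \ref{lem:cj}: $\bar\eta_j<\varphi_j$. This makes the interval in \eqref{eq:hypki} well defined and the $D_j$ term positive outside it.
\end{itemize}
Your ``monotonicity of $H_j^*$'' does not address blow-up.

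At $A$ there is a further tension: you cannot simply take $\psi_1(0)$ large. $\eta_1(0)$ must be only slightly above $\lambda_{21}(0)/\lambda_{11}(0)$, so that $\eta_1$ stays close to $\eta_{01}=\tfrac{\lambda_{21}}{\lambda_{11}}\varphi_1<\varphi_1$ and $Z_1(L_1)>0$ survives.
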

     To prove Theorem \ref{thm1}, we further simplify the linearized system \eqref{sysl01} by introducing Riemann invariants 
	\begin{equation}\label{RiemannIn}
		\begin{aligned}
			&y_{1i}(t,x)=v_i(t,x)+h_i(t,x)\sqrt{\frac{g}{H_i^*(x)}}, \\
			& y_{2i}(t,x) =v_i(t,x)-h_i(t,x)\sqrt{\frac{g}{H_i^*(x)}}.
		\end{aligned}
	\end{equation}
		Then system \eqref{sysl01} can be written in the characteristic form as follows
		\begin{equation}\label{chaform}
			\begin{aligned}
				&\partial_{t}y_{1i}+\lambda_{1i}(x)\partial_{x}y_{1i}+\gamma_{1i}(x)y_{1i}+\delta_{1i}(x)y_{2i}=0,\\
				&\partial_{t}y_{2i}-\lambda_{2i}(x)\partial_{x}y_{2i}+\gamma_{2i}(x)y_{1i}+\delta_{2i}(x)y_{2i}=0,
			\end{aligned}
		\end{equation}
		where
		\begin{equation}\label{lam1}
			\lambda_{1i}(x)=V_i^*(x)+\sqrt{gH_i^*(x)}>  \lambda_{2i}(x)= -V_i^*(x)+\sqrt{gH_i^*(x)}>0,
		\end{equation}
		and
		\begin{equation}\label{all>0}
			\begin{aligned}
				&	\gamma_{1i}(x)=g\frac{C_iV_i^{*2}}{\textcolor{black}{H_i^{*p}}}\left[-\frac{3}{4(\sqrt{gH_i^*}+V_i^*)}+\frac{1}{V_i^*}-\frac{\textcolor{black}{p}}{2\sqrt{gH_i^*}}\right],\\
				&\delta_{1i}(x)=g\frac{C_iV_i^{*2}}{\textcolor{black}{H_i^{*p}}}\left[-\frac{1}{4(\sqrt{gH_i^*}+V_i^*)}+\frac{1}{V_i^*}+\frac{\textcolor{black}{p}}{2\sqrt{gH_i^*}}\right],\\
				&\gamma_{2i}(x)=g\frac{C_iV_i^{*2}}{\textcolor{black}{H_i^{*p}}}\left[\frac{1}{4(\sqrt{gH_i^*}-V_i^*)}+\frac{1}{V_i^*}-\frac{\textcolor{black}{p}}{2\sqrt{gH_i^*}}\right],\\
				&\delta_{2i}(x)=g\frac{C_iV_i^{*2}}{\textcolor{black}{H_i^{*p}}}\left[\frac{3}{4(\sqrt{gH_i^*}-V_i^*)}+\frac{1}{V_i^*}+\frac{\textcolor{black}{p}}{2\sqrt{gH_i^*}}\right].\\
			\end{aligned}
		\end{equation}
		These expressions can also be found for instance in \cite{bastin2011coron}. Simple calculations using the subcritical condition \eqref{subcritical} lead to
        \begin{equation}\label{positive}
        \gamma_{1i}>0,\, \delta_{1i}>0,\,\gamma_{2i}>0\, \text{ and}\ \delta_{{2i}}>0.
        \end{equation}
        In the following computations, for the sake of simplicity, we still keep the expression of the physical boundary conditions for  nodes $A$ and $B$ as in \eqref{boul01} rather than re-expressing them in the new variables in \eqref{RiemannIn}.
        For  nodes $D_j$, the boundary conditions become
		\begin{equation}\label{nbou01}
			\begin{aligned}
				&D_j: y_{1j}(t,L_j)=c_j y_{2j}(t,L_j), \\
			\end{aligned}
		\end{equation}
		where
		\begin{equation}\label{eqcj}
	c_j=\frac{1+k_j\sqrt{\frac{H_j^*(L_j)}{g}}}{k_j\sqrt{\frac{H_j^*(L_j)}{g}}-1}.  \\	
		\end{equation}
In order to prove Theorem \ref{thm1}, we first introduce a useful lemma, the proof of which can be \textcolor{black}{easily checked using the result given in \cite{hayat2021exponential} as long as the parameter $p$ is nonnegative}.
\begin{lem}\label{etaresult}
For $i\in\{1,2,\cdots,n\}$, denote by
\begin{equation}\label{eta0i}
 \eta_{0i}(x)=\frac{\lambda_{2i}(x)}{\lambda_{1i}(x)}\varphi_i(x),
\end{equation}
then for any $x\in[0, L_i]$,
\begin{equation}
    \eta_{0i}'=\left|\frac{\delta_{1i}\varphi_i}{\lambda_{1i}}+\frac{\gamma_{2i}}{\lambda_{2i}\varphi_i}\eta^2_{0i}\right|,
\end{equation}
where
\begin{equation}\label{varphii}\varphi_i(x)=\frac{\varphi_{1i}(x)}{\varphi_{2i}(x)}\end{equation}
with
\begin{equation}\label{varphi12}
\varphi_{1i}(x)=e^{\int_{0}^{x}\frac{\gamma_{1i}(s)}{\lambda_{1i}(s)}ds},\quad \varphi_{2i}(x)=e^{-\int_{0}^{x}\frac{\delta_{2i}(s)}{\lambda_{2i}(s)}ds}.
\end{equation}
\end{lem}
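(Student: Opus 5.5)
Since $\delta_{1i}>0$ and $\gamma_{2i}>0$ by \eqref{positive}, and $\varphi_i>0$, $\lambda_{1i},\lambda_{2i}>0$ by \eqref{lam1}, the quantity inside the absolute value is positive. So the plan is to prove the identity without absolute value,
\begin{equation*}
\eta_{0i}'=\frac{\delta_{1i}\varphi_i}{\lambda_{1i}}+\frac{\gamma_{2i}\lambda_{2i}\varphi_i}{\lambda_{1i}^2},
\end{equation*}
where $\eta_{0i}=\frac{\lambda_{2i}}{\lambda_{1i}}\varphi_i$ has been substituted in the second term. From \eqref{varphi12} we have $\varphi_i'/\varphi_i=\gamma_{1i}/\lambda_{1i}+\delta_{2i}/\lambda_{2i}$. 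Differentiating $\eta_{0i}$, dividing by $\varphi_i$ and multiplying by $\lambda_{1i}$, the claim becomes the pointwise identity
\begin{equation*}
\lambda_{1i}\Big(\frac{\lambda_{2i}}{\lambda_{1i}}\Big)'+\frac{\lambda_{2i}}{\lambda_{1i}}\gamma_{1i}+\delta_{2i}=\delta_{1i}+\frac{\lambda_{2i}}{\lambda_{1i}}\gamma_{2i}.
\end{equation*}

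\textbf{Derivative term.} I will compute it from the steady-state equations \eqref{steady-state}. Drop the index $i$ and set $c=\sqrt{gH^*}$ and $K=gCV^{*2}/H^{*p}$. Then $c^2-V^{*2}=\lambda_1\lambda_2$, and \eqref{steady-state} gives
\begin{equation*}
V^{*\prime}=\frac{KV^*}{\lambda_1\lambda_2},\qquad c'=\frac{gH^{*\prime}}{2c}=-\frac{Kc}{2\lambda_1\lambda_2}.
\end{equation*}
Since $\lambda_1=V^*+c$ and $\lambda_2=c-V^*$,
\begin{equation*}
\lambda_2'\lambda_1-\lambda_1'\lambda_2=2c'V^*-2V^{*\prime}c=-\frac{3KcV^*}{\lambda_1\lambda_2},
\end{equation*}
and therefore
\begin{equation*}
\lambda_1\Big(\frac{\lambda_2}{\lambda_1}\Big)'=-\frac{3KcV^*}{\lambda_1^2\lambda_2}.
\end{equation*}

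\textbf{Cancelling the friction coefficients.} Every coefficient in \eqref{all>0} has the common prefactor $K$, so I divide the identity by $K$ and compare the bracketed terms.
\begin{itemize}
\item The $1/V^*$ contributions are $\frac{\lambda_2}{\lambda_1 V^*}+\frac{1}{V^*}$ on both sides, so they cancel.
\item The $p$-terms are $\frac{p}{2c}-\frac{p\lambda_2}{2c\lambda_1}$ on both sides, so they cancel. This is why $p\ge 0$ plays no role beyond the positivity used for the absolute value.
\item On the right-hand side the remaining terms are $-\frac{1}{4\lambda_1}+\frac{1}{4\lambda_1}=0$.
\end{itemize}
It remains to check
\begin{equation*}
-\frac{3cV^*}{\lambda_1^2\lambda_2}-\frac{3\lambda_2}{4\lambda_1^2}+\frac{3}{4\lambda_2}=0.
\end{equation*}
Multiplying by $4\lambda_1^2\lambda_2$ gives $-12cV^*+3(\lambda_1^2-\lambda_2^2)=-12cV^*+12cV^*=0$, using $\lambda_1^2-\lambda_2^2=4cV^*$.

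The main obstacle is purely computational: getting the derivative $(\lambda_2/\lambda_1)'$ right from \eqref{steady-state}, in particular the sign of $c'$ and the factor $\lambda_1\lambda_2=gH^*-V^{*2}$. Once that is done, the rest is bookkeeping of the four coefficients in \eqref{all>0}. Positivity \eqref{positive} then removes the absolute value, which completes the argument for each $i$ and every $x\in[0,L_i]$.
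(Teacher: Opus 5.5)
Your proposal is correct. I checked each step: the reduction through $\varphi_i'/\varphi_i=\gamma_{1i}/\lambda_{1i}+\delta_{2i}/\lambda_{2i}$ to a pointwise identity, the formula $\lambda_1(\lambda_2/\lambda_1)'=-3KcV^*/(\lambda_1^2\lambda_2)$ obtained from \eqref{steady-state}, the cancellation of the $1/V^*$- and $p$-terms, and the final use of $\lambda_1^2-\lambda_2^2=4cV^*$. I also checked the coefficients \eqref{all>0} you rely on against the linearization for general $p$, and they are consistent.

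The paper gives no computation for this lemma. It only points to \cite{hayat2021exponential} and asserts that the result extends to every $p\ge 0$. Your direct verification is self-contained, and it shows exactly why $p$ drops out of the identity, which is what the paper leaves to the reader.

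One step is worth tightening. To remove the absolute value you invoke $\gamma_{2i}>0$ from \eqref{positive}, but for general $p$ that sign is not guaranteed. With Froude number $t=V^*/c$ one has $\gamma_2/K=\frac{1}{c}\big(\frac{1}{4(1-t)}+\frac{1}{t}-\frac{p}{2}\big)$. Its minimum over $t\in(0,1)$ is $\frac{1}{c}\big(\frac{9}{4}-\frac{p}{2}\big)$, attained at $t=2/3$, so $\gamma_2$ can vanish or become negative once $p\ge 9/2$.

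You do not need the individual sign of $\gamma_2$, because your own bookkeeping already gives positivity. The argument of the absolute value equals $\frac{\varphi}{\lambda_1}\big(\delta_1+\frac{\lambda_2}{\lambda_1}\gamma_2\big)=\frac{K\varphi}{\lambda_1^2}\big(\frac{2c}{V^*}+\frac{pV^*}{c}\big)$. This is strictly positive for every $p\ge 0$, which is precisely the sense in which nonnegativity of $p$ enters the lemma.
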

Next, we prove the following key lemma
\begin{lem}\label{lem1}
For $i\in\{1,2,\cdots,n\}$, let $x^{i}_0$ be the critical point defined by \eqref{x0},
then
\begin{equation}\label{intergaline}
				\int_{0}^{x}e^{\int_{0}^{s}2\frac{\gamma_{2i}(\xi)}{\lambda_{1i}(\xi)}d\xi}\frac{\gamma_{2i}(s)}{\lambda_{2i}(s) \varphi_i(s)}ds<\frac{\lambda_{1i}(0)}{\lambda_{1i}(0)-\lambda_{2i}(0)}
			\end{equation}
   holds for any
$x \in [0, x^{i}_0)$. 
\end{lem}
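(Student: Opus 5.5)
The plan is to drop the index $i$ and use the water height as the independent variable instead of $x$. By \eqref{steady-state} and \eqref{subcritical}, $H^*$ is strictly decreasing on $[0,x_0)$, so $x\mapsto H^*(x)$ is a bijection onto $(H_c,H^*(0)]$, where $H_c=(Q/\sqrt g)^{2/3}$ is given by \eqref{dcr}. Since $Q=H^*V^*$ is constant, $V^*=Q/H^*$. The key identity is read off from \eqref{steady-state}:
\begin{equation*}
g\frac{CV^{*2}}{H^{*p}}\,dx=-\frac{gH^*-V^{*2}}{H^*}\,dH^*.
\end{equation*}
Every coefficient in \eqref{all>0} is of the form $g\frac{CV^{*2}}{H^{*p}}\times(\cdot)$. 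Hence $\gamma_{1}/\lambda_{1}\,dx$, $\delta_{2}/\lambda_{2}\,dx$, $\gamma_{2}/\lambda_{1}\,dx$ and $\gamma_{2}/\lambda_{2}\,dx$ all become explicit differentials in $H^*$ alone. In particular the friction coefficient $C$ disappears, which is consistent with the final conditions depending only on end values of $H^*$.

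To keep the algebra manageable, I would use the inverse Froude variable $w=\sqrt{gH^*}/V^*=\sqrt g\,H^{*3/2}/Q$. Then $w>1$ on $[0,x_0)$ and $w\to1$ as $x\to x_0$, with $H^*=(Qw/\sqrt g)^{2/3}$, $\lambda_{1}=V^*(w+1)$ and $\lambda_{2}=V^*(w-1)$. In this variable each of the logarithmic integrands $\frac{\gamma_1}{\lambda_1}dx$, $\frac{\delta_2}{\lambda_2}dx$, $\frac{2\gamma_2}{\lambda_1}dx$ is a rational function of $w$ times $dw$ (with powers of $w$ coming from $p$). I would integrate each one by partial fractions. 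This produces closed forms:
\begin{itemize}
\item $\varphi(x)$ becomes a product of powers of $w$, $w+1$ and $w-1$, normalized to $1$ at $w_0=w(0)$;
\item $e^{\int_0^s 2\gamma_2/\lambda_1}$ has the same structure.
\end{itemize}
After multiplying by $\frac{\gamma_2}{\lambda_2\varphi}\,dx$, the integrand in \eqref{intergaline} becomes $-F(w)\,dw$ for an explicit positive function $F$ on $(1,w_0]$.

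With this in hand, the proof runs as follows:
\begin{enumerate}
\item By \eqref{positive}, the integrand in \eqref{intergaline} is positive, so the left-hand side is increasing in $x$. It therefore suffices to bound its limit as $x\to x_0^-$, i.e.\ to show $\int_1^{w_0}F(w)\,dw\le \frac{\lambda_{1}(0)}{\lambda_{1}(0)-\lambda_{2}(0)}=\frac{w_0+1}{2}$.
\item The strict inequality at every $x<x_0$ then follows from positivity of the integrand on $(x,x_0)$, provided the limiting integral converges.
\item I would try to exhibit an explicit antiderivative $G$ with $G'=-F$. I expect the factors from $\varphi$ and from the exponential to combine into something like $\frac{d}{dw}\big[\text{power of }w\cdot\frac{(w+1)^a}{(w-1)^b}\big]$, as happens in \cite{hayat2021exponential}.
\item Then compare $G(1^+)-G(w_0)$ with $(w_0+1)/2$.
\end{enumerate}
Convergence near $w=1$ must be checked. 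The factor $1/(\sqrt{gH^*}-V^*)$ in $\gamma_2$ is cancelled by the factor $(gH^*-V^{*2})$ in $dx$, and the behaviour of $\varphi$ near $w=1$ (through $\delta_2/\lambda_2$, which carries $(w-1)^{-1}$) has to make $1/\varphi$ vanish or stay bounded. Lemma \ref{etaresult} can be used as a consistency check on the form of $\varphi$.

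The main obstacle is step 3: actually finding the closed-form antiderivative, or, failing that, an explicit supersolution. If exact integration with general $p\ge0$ is too messy, my fallback is a comparison argument. I would look for an explicit $\Psi(w)$ with $\Psi(w_0)=0$, $\Psi(1^+)\le (w_0+1)/2$, and $-\Psi'(w)\ge F(w)$ on $(1,w_0]$. A natural first candidate is $\Psi(w)=\frac{w_0+1}{2}\big(1-\frac{\varphi\text{-type factor}(w)}{\ldots}\big)$, mimicking the ratio $\eta_0=\frac{\lambda_2}{\lambda_1}\varphi$ of Lemma \ref{etaresult}: at $x_0$ one has $\lambda_2=0$, so $\eta_0$ vanishes there, which matches the right-hand side being attained exactly at the critical point. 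The $p$-dependent terms would then be handled by the monotonicity in $p$ of the bracketed factors in \eqref{all>0}.
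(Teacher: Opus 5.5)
Your route differs from the paper's and it works; once carried out it is shorter than the paper's proof. As written, though, it stops where the content of the lemma lies: steps 3--4 are announced rather than executed, so the inequality is never established.

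The paper also computes the exponential factor exactly (\eqref{comb1}). It then majorizes the rest of the integrand crudely, dropping the $p$-term and using \eqref{ine02} and \eqref{ine01}. That leaves it with the inequality \eqref{eq:twocases} in $d_i$, which needs a case analysis in $p$. The exact integral you are after is already in the paper, in Appendix~\ref{app:nontrivial}, \eqref{uus1}--\eqref{uus2}, where it is used for Remark~\ref{remf} rather than for this lemma.

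Your structural guesses are off. $\varphi$ is not a product of powers of $w$ and $w\pm1$. With $q=2p/3$ and $dH^*/H^*=\tfrac23\,dw/w$,
\[
\varphi=\frac{(w+1)(w_0-1)}{(w-1)(w_0+1)}\,e^{-\frac43(w-w_0)+\frac{2p}{3}(\frac1w-\frac1{w_0})},\qquad e^{\int_0^x\frac{2\gamma_2}{\lambda_1}}=\Big(\frac{w}{w_0}\Big)^{1+q}e^{-\frac43(w-w_0)+\frac{2p}{3}(\frac1w-\frac1{w_0})}.
\]
So the exponentials cancel in the ratio, and multiplying by $\frac{\gamma_2}{\lambda_2}dx$ cancels the factors $w\pm1$ as well. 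The integrand becomes
\[
-\frac23\,\frac{w_0+1}{w_0-1}\,w_0^{-(1+q)}\Big(w^{1+q}-\frac{3+2p}{4}w^{q}+\frac{p}{2}w^{q-1}\Big)\,dw .
\]
This is a sum of three powers of $w$, with antiderivative $\frac{3}{2(3+p)}w^{2+q}-\frac34w^{1+q}+\frac34w^{q}$. It is not a derivative of something like $(w+1)^a/(w-1)^b$.

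Set $c=2/(3+p)$ and $w=w(x)$. Then \eqref{intergaline} at the point $x$ is equivalent to
\[
\Psi(w_0,w):=(1-c)w_0^{2+q}-w_0^{q}+c\,w^{2+q}-w^{1+q}+w^{q}>0 .
\]
This follows from two facts:
\begin{itemize}
\item $\partial_{w_0}\Psi=w_0^{q-1}\big((q+\tfrac23)w_0^2-q\big)>0$ for $w_0\ge1$;
\item $\Psi(w,w)=w^{1+q}(w-1)\ge0$.
\end{itemize}
Hence $\Psi>0$ for all $1\le w\le w_0$, $w_0>1$ and all $p\ge0$, which replaces the whole case analysis. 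Up to the factor $\tfrac12H_c^{(3+2p)/2}$, $\Psi$ is exactly the denominator of $m_i$ in \eqref{mx}. Also, $\Psi(w_0,1)>0$, so the bound is not attained at $x_0$, contrary to the heuristic behind your fallback.

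Verify $\Psi>0$ at every $w$, not only at $w=1$. Your step 1 reduces to the endpoint using $\gamma_2>0$. \eqref{positive} asserts this, but
\[
\gamma_2=\frac{gCV^{*}}{H^{*p}}\Big(\frac{1}{4(w-1)}+1-\frac{p}{2w}\Big)
\]
is negative for instance at $p=5$, $w=1.6$; it is positive for all $w>1$ only when $p<9/2$. The paper's proof is unaffected because it bounds $f$ above by a positive function. The pointwise argument through $\Psi$ makes your proof unaffected too.
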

\begin{proof}
In order to simplify the left-hand side of the inequality \eqref{intergaline}, we use the dynamics \eqref{steady-state} of the steady-state. 
 Firstly, from \eqref{steady-state}, we 
 have
\begin{equation}
\label{exprCi}
     gC_iV_i^{*2}=(V_i^{*2}-gH^*_i)\textcolor{black}{H^{p-1}}(H_i^*)_x=-\lambda_1\lambda_2\textcolor{black}{H^{p-1}}(H_i^*)_x.
\end{equation}
Substituting \eqref{exprCi} into \eqref{all>0}, we can rewrite $\gamma_{1i}$,$\gamma_{2i}$, $\delta_{1i}$ and $\delta_{2i}$ in the form that depends on the dynamics of $H^*_i$ as follows
\begin{equation}\label{mall>0}
			\begin{aligned}
				&	\gamma_{1i}(x)=-\frac{(H_i^*)_x}{H_i^*}\lambda_{1i}\lambda_{2i}\left[-\frac{3}{4\lambda_{1i}}+\frac{1}{V_i^*}-\frac{\textcolor{black}{p}}{2\sqrt{gH_i^*}}\right],\\
				&\delta_{1i}(x)=-\frac{(H_i^*)_x}{H_i^*}\lambda_{1i}\lambda_{2i}\left[-\frac{1}{4\lambda_{1i}}+\frac{1}{V_i^*}+\frac{\textcolor{black}{p}}{2\sqrt{gH_i^*}}\right],\\
				&\gamma_{2i}(x)=-\frac{(H_i^*)_x}{H_i^*}\lambda_{1i}\lambda_{2i}\left[\frac{1}{4\lambda_{2i}}+\frac{1}{V_i^*}-\frac{\textcolor{black}{p}}{2\sqrt{gH_i^*}}\right],\\
				&\delta_{2i}(x)=-\frac{(H_i^*)_x}{H_i^*}\lambda_{1i}\lambda_{2i}\left[\frac{3}{4\lambda_{2i}}+\frac{1}{V_i^*}+\frac{\textcolor{black}{p}}{2\sqrt{gH_i^*}}\right],
			\end{aligned}
		\end{equation}
Using the definition of $\varphi_i(x)$ in \eqref{varphii}, we can simplify the integrand in the left-hand side of \eqref{intergaline} as
\begin{equation}\label{tem}
   e^{\int_{0}^{s}2\frac{\gamma_{2i}}{\lambda_{1i}}d\xi}\frac{\gamma_{2i}(s)}{\lambda_{2i}(s) \varphi_i(s)} = e^{\int_{0}^{s}\left(\frac{2\gamma_{2i}-\gamma_{1i}}{\lambda_{1i}}-\frac{\delta_{2i}}{\lambda_{2i}}\right)d\xi}\frac{\gamma_{2i}(s)}{\lambda_{2i}(s)}.
\end{equation}
Here and hereafter, we will omit the integration variables for the sake of simplicity. 
From \eqref{mall>0}, the exponential function part in \eqref{tem} can be rewritten as
\begin{equation}\label{ovc}
\begin{aligned}
    e^{\int_{0}^{s}\left(\frac{2\gamma_{2i}-\gamma_{1i}}{\lambda_{1i}}-\frac{\delta_{2i}}{\lambda_{2i}}\right)d\xi}
    &= e^{\int_{0}^{s}\frac{1}{\lambda_{1i}\lambda_{2i}}\left(\lambda_{2i}(2\gamma_{2i}-\gamma_{1i})-\lambda_{1i}\delta_{2i} \right) d\xi}   \\
    &= e^{\int_{0}^{s}\frac{(H_i^*)_{\xi}}{\lambda_{1i}\lambda_{2i}}\left(\frac{\textcolor{black}{3+2p}}{2}\left(\frac{gH_i^*-V_i^{*2}}{H_i^*}\right)+\frac{3\sqrt{gH_i^*}V_i^*}{H_i^*}\right)d\xi}\\
    &=e^{\int_{0}^{s} \frac{\textcolor{black}{3+2p}}{2}\frac{(H_i^*)_{\xi}}{H_i^*}d\xi}\cdot e^{\int_{0}^{s}\frac{3\sqrt{gH_i^*}V_i^*}{H_i^*(gH_i^{*}-V_i^{*2})}(H_i^*)_{\xi}d\xi}\\
    &=\left( \frac{H_i^*(s)}{H_i^*(0)}\right)^{\frac{\textcolor{black}{3+2p}}{2}} \exp\left(\displaystyle\int_{0}^{s}\left( \frac{(H_i^*)_{\xi}}{H_i^*}+
    \underbrace{\frac{\left( \frac{\sqrt{gH_i^*}}{2H_i^*}+\frac{V_i^*}{H_i^*}\right)}{\sqrt{gH_i^*}-V_i^*}(H_i^*)_{\xi}}_{I_1}+
    \underbrace{\frac{-\frac{3\sqrt{gH_i^*}}{2H^*_{i}}}{\sqrt{gH_i^*}+V_i^*}(H_i^*)_{\xi}}_{I_2}\right)d\xi\right).
    \end{aligned}
    \end{equation}
From \eqref{lam1} and using \eqref{nve}, we can obtain 
\begin{equation}\label{I1}
I_1=\frac{(\lambda_{2i})_{\xi}}{\lambda_{2i}}.
\end{equation}
On the other hand,
\begin{equation}\label{I2}
I_2=\frac{-\frac{3}{2}\sqrt{gH_i^*}(H_i^*)_{\xi}}{{\sqrt{g}(H^*_i)^{\frac{3}{2}}+Q_i}}=-\frac{({\sqrt{g}(H^*_i)^{\frac{3}{2}}+Q_i})_{\xi}}{{\sqrt{g}(H^*_i)^{\frac{3}{2}}+Q_i}}.
\end{equation}
Combining \eqref{ovc}--\eqref{I2}, one has
    \begin{equation}\label{comb1}
    \begin{aligned}
    e^{\int_{0}^{s}\left(\frac{2\gamma_{2i}-\gamma_{1i}}{\lambda_{1i}}-\frac{\delta_{2i}}{\lambda_{2i}}\right)d\xi}
    &=\left( \frac{H_i^*(s)}{H_i^*(0)}\right)^{\frac{\textcolor{black}{5+2p}}{2}}\cdot \frac{\lambda_{2i}(s)}{\lambda_{2i}(0)} \cdot \frac{\sqrt{g}(H_i^*(0))^{\frac{3}{2}}+Q_i}{\sqrt{g}(H_i^*(s))^{\frac{3}{2}}+Q_i}\\
    &=\left( \frac{H_i^*(s)}{H_i^*(0)}\right)^{\frac{\textcolor{black}{5+2p}}{2}}\cdot \frac{\lambda_{2i}(s)}{\lambda_{2i}(0)} \cdot \frac{\sqrt{g}(H_i^*(0))^{\frac{3}{2}}+H_i^*(0)V_i^*(0)}{\sqrt{g}(H_i^*(s))^{\frac{3}{2}}+H_i^*(s)V_i^*(s)}\\
    &=\left(\frac{H_i^*(s)}{H_i^*(0)}\right)^{\frac{\textcolor{black}{5+2p}}{2}}\cdot \frac{\lambda_{2i}(s)}{\lambda_{2i}(0)} \cdot \frac{H_i^*(0)\lambda_{1i}(0)}{H_i^*(s)\lambda_{1i}(s)} \\
    &=\left( \frac{H_i^*(s)}{H_i^*(0)}\right)^{\frac{\textcolor{black}{3+2p}}{2}}\cdot \frac{\lambda_{2i}(s)}{\lambda_{2i}(0)} \cdot \frac{\lambda_{1i}(0)}{\lambda_{1i}(s)},
    \end{aligned}
    \end{equation}
which together with 
\begin{equation}\label{comb2}
 \frac{\gamma_{2i}}{\lambda_{2i}}
   =-\frac{\textcolor{black}{3-2p}}{4H_i^*}(H_i^*)_x-\frac{\sqrt{gH_i^*}}{Q_i}(H_i^*)_x+\frac{\textcolor{black}{p}Q_i}{2\sqrt{gH_i^*}H_i^{*2}}(H_i^*)_x-\frac{\sqrt{gH_i^*}}{2H_i^*(\sqrt{gH_i^*}-V_i^*)}(H_i^*)_x.
\end{equation}
gives
\begin{equation}\label{exb}
\begin{aligned}
    f(s):&=e^{\int_{0}^{s}\left(\frac{2\gamma_{2i}-\gamma_{1i}}{\lambda_{1i}}-\frac{\delta_{2i}}{\lambda_{2i}}\right)d\xi}\frac{\gamma_{2i}}{\lambda_{2i}} \\
     &=\left( \frac{H_i^*(s)}{H_i^*(0)}\right)^{\frac{\textcolor{black}{3+2p}}{2}}\cdot \frac{\lambda_{2i}(s)}{\lambda_{2i}(0)} \cdot \frac{\lambda_{1i}(0)}{\lambda_{1i}(s)}\cdot (H_i^*)_s\left[-\frac{\textcolor{black}{3-2p}}{4H_i^*}-\frac{\sqrt{gH_i^*}}{Q_i}+\frac{\textcolor{black}{p}Q_i}{2\sqrt{gH_i^*}H_i^{*2}}-\frac{\sqrt{gH_i^*}}{2H_i^*(\sqrt{gH_i^*}-V_i^*)} \right]\\
     &<\left( \frac{H_i^*(s)}{H_i^*(0)}\right)^{\frac{\textcolor{black}{3+2p}}{2}}\cdot \frac{\lambda_{2i}(s)}{\lambda_{2i}(0)} \cdot \frac{\lambda_{1i}(0)}{\lambda_{1i}(s)}\cdot (H_i^*)_s\left[-\frac{\textcolor{black}{3-2p}}{4H_i^*}-\frac{\sqrt{gH_i^*}}{Q_i}-\frac{\sqrt{gH_i^*}}{2H_i^*(\sqrt{gH_i^*}-V_i^*)} \right].
    \end{aligned}
\end{equation}
Next, we estimate the terms on the right-hand side of \eqref{exb} one by one.
Firstly, we focus on $\frac{\lambda_{2i}(s)}{\lambda_{2i}(0)} \cdot \frac{\lambda_{1i}(0)}{\lambda_{1i}(s)}$, since
\begin{equation}
\begin{aligned}
    &\lambda_{2i}(s)\lambda_{1i}(0)-\lambda_{2i}(0)\lambda_{1i}(s)\\
    =&\left(\sqrt{gH_i^*(s)}-V_i^*(s)\right)\left(\sqrt{gH_i^*(0)}+V_i^*(0)\right)-\left(\sqrt{gH_i^*(0)}-V_i^*(0)\right)\left(\sqrt{gH_i^*(s)}+V_i^*(s)\right) \\
=&2\sqrt{g}\left(\sqrt{H_i^*(s)}V_i^*(0)-\sqrt{H_i^*(0)}V_i^*(s)\right)\\
    =&2\sqrt{g}Q_{i}\left(\frac{\sqrt{H_i^*(s)}H_i^*(s)-\sqrt{H_i^*(0)}H_i^*(0)}{H_i^*(0)H_i^*(s)} \right).
    \end{aligned}
\end{equation}
From \eqref{steady-state}, $H_i^*$ is a monotonically decreasing function,  it follows immediately that
\begin{equation}\label{ine02}
    \frac{\lambda_{2i}(s)}{\lambda_{2i}(0)} \cdot \frac{\lambda_{1i}(0)}{\lambda_{1i}(s)}<1.
\end{equation}
Next, we focus on $\frac{H_i^*(s)}{H_i^*(0)}\cdot \frac{\lambda_{1i}(0)}{\lambda_{1i}(s)}$ and obtain that $\frac{\lambda_{1i}}{H_{i}^{*}}= \sqrt{\frac{g}{H_{i}^{*}}}+\frac{Q_i}{(H^{*}_{i})^{2}}$ is an increasing function since $H^{*}_{i}$ is decreasing. As a consequence
\begin{equation}\label{ine01}
  \frac{H_i^*(s)}{H_i^*(0)}\cdot \frac{\lambda_{1i}(0)}{\lambda_{1i}(s)}<1.
\end{equation}

By using the relationship \eqref{ine01}, and recalling that $(H_{i}^{*})_{x}<0$, we know that
\begin{equation}
    \begin{aligned}
        &-
        \left( \frac{H_i^*(s)}{H_i^*(0)}\right)^{\frac{\textcolor{black}{3+2p}}{2}}\cdot \frac{\lambda_{2i}(s)}{\lambda_{2i}(0)} \cdot \frac{\lambda_{1i}(0)}{\lambda_{1i}(s)}\frac{\sqrt{gH_i^*(s)}}{2H_i^*(s)(\sqrt{gH_i^*(s)}-V_i^*(s))}(H_{i}^{*})_{s}
        \\
        <& -
        \left( \frac{H_i^*(s)}{H_i^*(0)}\right)^{\frac{\textcolor{black}{1+2p}}{2}}\cdot \frac{\lambda_{2i}(s)}{\lambda_{2i}(0)} \frac{\sqrt{gH_i^*(s)}}{2H_i^*(s)(\sqrt{gH_i^*(s)}-V_i^*(s))}(H_{i}^{*})_{s}
        \\
        =&-
        \left( \frac{H_i^*(s)}{H_i^*(0)}\right)^{\frac{\textcolor{black}{1+2p}}{2}}\cdot \frac{1}{\lambda_{2i}(0)} \frac{\sqrt{gH_i^*(s)}}{2H_i^*(s)}(H_{i}^{*})_{s}.
    \end{aligned}
\end{equation}

Again from \eqref{steady-state},  we know that $H_i^*(x)$ is a decreasing function, therefore $\frac{H_i^*(s)}{H_i^*(0)}<1$, this together with \eqref{ine02} gives
\begin{equation}
    \begin{aligned}
        f(s)<-\left\{\left( \frac{H_i^*(s)}{H_i^*(0)}\right)^{\frac{\textcolor{black}{1+2p}}{2}}\frac{\textcolor{black}{\max\{0,3-2p\}}}{4H_i^*(0)}+\left( \frac{H_i^*(s)}{H_i^*(0)}\right)^{\frac{\textcolor{black}{3+2p}}{2}} \frac{\sqrt{gH_i^*(s)}}{Q_i}+\left( \frac{H_i^*(s)}{H_i^*(0)}\right)^{\frac{\textcolor{black}{1+2p}}{2}} \frac{1}{\lambda_{2i}(0)} \frac{\sqrt{gH_i^*(s)}}{2H_i^*(s)}\right\}(H_i^*)_s.
    \end{aligned}
\end{equation}
As a consequence,
\begin{equation}\label{f(s)}
    \begin{aligned}
       \int_{0}^{x^i_{0}}f(s) ds<&
        -\int_{H^*_{i}(0)}^{H^*_{i}(x^i_{0})}
        \left( \frac{H_i^*}{H_i^*(0)}\right)^{\frac{\textcolor{black}{1+2p}}{2}}\frac{\textcolor{black}{\max\{0,3-2p\}}}{4H_i^*(0)}dH_i^*-
        \int_{H^*_{i}(0)}^{H^*_{i}(x^i_{0})}
        \left( \frac{H_i^*}{H_i^*(0)}\right)^{\frac{\textcolor{black}{3+2p}}{2}}\frac{\sqrt{gH_i^*}}{Q_i}dH_i^*\\
        &-
        \int_{H^*_{i}(0)}^{H^*_{i}(x^i_{0})}
        \left( \frac{H_i^*}{H_i^*(0)}\right)^{\frac{\textcolor{black}{1+2p}}{2}}\frac{1}{\lambda_{2i}(0)} \frac{\sqrt{gH_i^*}}{2H_i^*}dH_i^*\\
        =&\textcolor{black}{\frac{\textcolor{black}{\max\{0,3-2p\}}}{2(3+2p)}}\left(\frac{1}{H_i^*(0)} \right)^{\frac{\textcolor{black}{3+2p}}{2}}\left((H_i^*(0))^{\frac{\textcolor{black}{3+2p}}{2}}-(H_i^*(x^i_0))^{\frac{\textcolor{black}{3+2p}}{2}} \right)\\
        &+\frac{\sqrt{g}}{\textcolor{black}{(3+p)}Q_i}\left(\frac{1}{H_i^*(0)} \right)^{\frac{\textcolor{black}{3+2p}}{2}}\left( (H_i^*(0))^{\textcolor{black}{3+p}}-(H_i^*(x^i_0))^{\textcolor{black}{3+p}} \right)\\
        &+\frac{\sqrt{g}}{\textcolor{black}{2(1+p)}\lambda_{2i}(0)}\left(\frac{1}{H_i^*(0)}\right)^{\frac{\textcolor{black}{1+2p}}{2}}\left( (H_i^{*}(0))^{\textcolor{black}{1+p}}-(H_i^{*}(x^i_0))^{\textcolor{black}{1+p}}\right).
    \end{aligned}
\end{equation}
We denote $d_{i} = H_{i}^*(x^i_{0})/H_{i}^{*}(0)$, then $d_i\in(0,1)$, $i\in \{1,2,\cdots,n\}$.
We now express the three terms in \eqref{f(s)} using $d_i$ as the following
\begin{equation}
\textcolor{black}{\frac{\textcolor{black}{\max\{0,3-2p\}}}{2(3+2p)}}\left(\frac{1}{H_i^*(0)} \right)^{\frac{\textcolor{black}{3+2p}}{2}}\left((H_i^*(0))^{\frac{\textcolor{black}{3+2p}}{2}}-(H_i^*(x^{i}_0))^{\frac{\textcolor{black}{3+2p}}{2}} \right)= \textcolor{black}{\frac{\textcolor{black}{\max\{0,3-2p\}}}{2(3+2p)}}\left(1-d_i^{\frac{\textcolor{black}{3+2p}}{2}}\right).
\end{equation}
 Since the critical points satisfy \eqref{dcr}, we have
\begin{equation}
\begin{split}
   & \frac{\sqrt{g}}{\textcolor{black}{(3+p)}Q_i}\left(\frac{1}{H_i^*(0)} \right)^{\frac{\textcolor{black}{3+2p}}{2}}\left( (H_i^*(0))^{\textcolor{black}{3+p}}-(H_i^*(x^{i}_0))^{\textcolor{black}{3+p}} \right)\\
    =&\frac{1}{\textcolor{black}{3+p}}\left(\frac{1}{H_i^*(x^i_0)}\right)^{\frac{3}{2}}\left(\frac{1}{H_i^*(0)} \right)^{\frac{\textcolor{black}{3+2p}}{2}}\left( (H_i^*(0))^{\textcolor{black}{3+p}}-(H_i^*(x^{i}_0))^{\textcolor{black}{3+p}} \right)\\
=&\frac{1}{\textcolor{black}{3+p}}\left( d_i^{-\frac{3}{2}}-d_i^{\frac{\textcolor{black}{3+2p}}{2}}\right).
\end{split}
\end{equation}
And using the definition of the critical points \eqref{dcr} again, we obtain
\begin{equation}
\begin{split}
&\frac{\sqrt{g}}{\textcolor{black}{2(1+p)}\lambda_{2i}(0)}\left(\frac{1}{H_i^*(0)}\right)^{\frac{\textcolor{black}{1+2p}}{2}}\left( (H_i^{*}(0))^{\textcolor{black}{1+p}}-(H_i^{*}(x^i_0))^{\textcolor{black}{1+p}}\right)\\
=&\frac{\sqrt{g}}{\textcolor{black}{2(1+p)}(\sqrt{gH_i^*(0)}-Q_i(H_i^*(0))^{-1})(H_i^*(0))^{\frac{\textcolor{black}{1+2p}}{2}}}\left( (H_i^{*}(0))^{\textcolor{black}{1+p}}-(H_i^{*}(x_0))^{\textcolor{black}{1+p}}\right) \\
=&\frac{1}{\textcolor{black}{2(1+p)}((H_i^*(0))^{\textcolor{black}{1+p}}-(H_i^*(x^i_0))^{\frac{3}{2}}(H_i^*(0))^{\textcolor{black}{\frac{2p-1}{2}}})}\left( (H_i^{*}(0))^{\textcolor{black}{1+p}}-(H_i^{*}(x_0))^{\textcolor{black}{1+p}}\right)\\ 
=& \frac{1}{\textcolor{black}{2(1+p)}}\left(\frac{1-d_i^{\textcolor{black}{1+p}}}{1-d_i^{\frac{3}{2}}} \right).
\end{split}
\end{equation}

One the other hand, we have
\begin{equation}
\begin{split}
    \frac{\lambda_{1}(0)}{\lambda_{1}(0)-\lambda_{2}(0)}=&\frac{\sqrt{g (H^*_{i}(0))^{3}}+Q_i}{2Q_i}\\
    =& \frac{1}{2}\left(1+(H^*_{i}(0))^{\frac{3}{2}}\frac{\sqrt{g}}{Q_i}\right) \\
    =&\frac{1}{2}\left(1+(H^*_{i}(0))^{\frac{3}{2}}(H_i^*(x^0_i))^{-\frac{3}{2}})\right)\\
    =&\frac{1}{2}\left(1+d_i^{-\frac{3}{2}}\right).
\end{split}
\end{equation}
So overall, to show \eqref{intergaline} it is sufficient to show that
\begin{equation}
  \textcolor{black}{\frac{\textcolor{black}{\max\{0,3-2p\}}}{2(3+2p)}}\left(1-d_i^{\frac{\textcolor{black}{3+2p}}{2}}\right)+\frac{1}{\textcolor{black}{3+p}}\left( d_i^{-\frac{3}{2}}-d_i^{\frac{\textcolor{black}{3+2p}}{2}}\right)+ \frac{1}{\textcolor{black}{2(1+p)}}\left(\frac{1-d_i^{\textcolor{black}{1+p}}}{1-d_i^{\frac{3}{2}}} \right)<\frac{1}{2}\left(1+d_i^{-\frac{3}{2}}\right),
\end{equation}
which now only depends on $d_i$ and the parameter $p$.

Multiplying $1-d_i^{\frac{3}{2}}$ to the both sides of the above inequality, we obtain
\begin{equation}
\label{eq:twocases}
    \begin{aligned}
        \frac{\textcolor{black}{\max\{0,\!3\!-\!2p\}}}{\textcolor{black}{3\!+\!2p}}\!\left(\! 1\!-d_i^{\frac{3}{2}}\!\!-d_i^{\frac{\textcolor{black}{3+2p}}{2}}\!\!\!+d_i^{\textcolor{black}{3+p}}\!\right)\!\!+\!\frac{2}{\textcolor{black}{3+p}}\!\left(\!\! -1\!+\!d_i^{-\frac{3}{2}}\!\!-d_i^{\frac{\textcolor{black}{3+2p}}{2}}\!\!\!+d_i^{\textcolor{black}{3+p}}\right)\!+\!\frac{1}{\textcolor{black}{1+p}}\!\left(\!1\!-\!d_i^{\textcolor{black}{1+p}} \!\right)\!<\!\!-d_i^{\frac{3}{2}}\!+\!d_i^{-\frac{3}{2}}.
    \end{aligned}
\end{equation}
Assume for now that $p\leq 3/2$, then the above
{\color{black}
is equivalent to
\begin{equation}
\label{eq:equiv1}
    (1\!-\!d_i^{\frac{3+2p}{2}}\!\!+d_i^{3+p})\!-\!\frac{4p}{3+2p}(1\!-d_i^{\frac{3}{2}}\!-d_i^{\frac{3+2p}{2}}\!\!+d_i^{3+p})+\frac{2}{3+p}\left(\!-1\!-d_i^{\frac{3+2p}{2}}\!+d_i^{3+p} \right)\!+\!\frac{1}{1+p}(1\!-d_i^{1+p})<\frac{1+p}{3+p}d_i^{-\frac{3}{2}}.
\end{equation}
Denote 
\begin{equation}\label{dFR}
\begin{split}
F(x,p) =& (1-x^{\frac{3+2p}{2}}+x^{3+p})-\frac{4p}{3+2p}(1-x^{\frac{3}{2}}-x^{\frac{3+2p}{2}}+x^{3+p})-\frac{2}{3+p}\\
&+\left( \frac{2}{3+p}-1\right)x^{-\frac{3}{2}}+\frac{2}{3+p}\left(-x^{\frac{3+2p}{2}}+x^{3+p} \right)+\frac{1}{1+p}(1-x^{1+p}).
\end{split}
\end{equation}
The equation \eqref{eq:equiv1} is equivalent to show that $F(d_{i},p)<0$. We rewrite \eqref{dFR} as
\begin{equation*}
F(x,p)=
\Big(\frac{2}{3+p}-1\Big)\big(x^{-\frac{3}{2}}-1\big)
+\Big(\frac{4p}{3+2p}\Big)\big(x^{\frac{3}{2}}-1\big)
+\Big(1-\frac{4p}{3+2p}+\frac{2}{3+p}\Big)\big(x^{3+p}-x^{\frac{3+2p}{2}}\big)
+\frac{1}{1+p}\big(1-x^{1+p}\big).
\end{equation*}
Now observe that for every $p\geq 0$ and every $x\in(0,1)$, using the mean value theorem with some $c\in(x,1)$
\begin{equation}
\label{eq:meanval}
\frac{1}{1+p}\big(1-x^{1+p}\big) = \int_{x}^{1}s^{p} ds = c^{p}(1-x)\ \le\ 1-x\ \le\ 1-x^{\frac{3}{2}}.
\end{equation}
Let us now define
\[
B(p):=\frac{4p}{3+2p},\qquad
C(p):=1-\frac{4p}{3+2p}+\frac{2}{3+p},\qquad
K(p):=-\left( \frac{2}{3+p}-1\right) = \frac{1+p}{3+p}.
\]
One has, using \eqref{eq:meanval},
\begin{equation}
\label{eq:F1}
\begin{split}
F(x,p) \leq& -K(p)\big(x^{-\frac{3}{2}}-1\big)
+B(p)\big(x^{\frac{3}{2}}-1\big)
+C(p)\big(x^{3+p}-x^{\frac{3+2p}{2}}\big)
+\big(1-x^{\frac{3}{2}}\big)\\
=&(1-x^{\frac{3}{2}})\left[1-\left(K(p)x^{-\frac{3}{2}}
+B(p)
+C(p)x^{\frac{3+2p}{2}}\right)\right].
\end{split}
\end{equation}
Let us set $H(p,x) := K(p)x^{-3}+B(p)+C(p)x^{3+2p}$, we are going to show that $H(p,d_{i}^{1/2})>1$. First let us note that for $p\geq 0$ given, two possible cases can occur: either $C(p)> 0$ or $C(p)\leq 0$. If $C(p)\leq 0$ then $H(p,\cdot)$ admits a minimum on $(0,1]$ at $x=1$ and thus
\begin{equation}
H(p,x) \geq H(p,1) = K(p)+B(p)+C(p) = 2 > 1,\;\forall x\in(0,1).
\end{equation}
If $C(p)>0$ then, one only needs to check the cases where $p\textcolor{black}{<} 3/4$. Indeed, if $p\geq 3/4$ then $B(p) \geq 2/3$ and since $K(p)x^{-3}>(1/3)x^{-3}>1/3$ and  $C(p)>0$ then $H(p,x)>1$ for any $x\in(0,1)$. Let us now consider the case $p\in[0,3/4)$. In this case, using the definitions of $B(p)$ and $C(p)$
\begin{equation}
H(p,x) \textcolor{black}{=} B(p)(1-x^{3+2p}) + \left(1+\frac{2}{3+p}\right)x^{3+2p} + K(p)x^{-3}.
\end{equation}
Noting that $K(p)>1/3$, and that $x\in(0,1)$, we have
\begin{equation}
H(p,x) > \left(1+\frac{8}{15}\right)x^{\frac{9}{2}} + \frac{x^{-3}}{3}.
\end{equation}
This expression admits a minimum on $x\in(0,1)$ which satisfies $x^{15/2} = 2/(9\alpha)$ with $\alpha = 23/15$, thus
\begin{equation}
H(p,x) >\alpha\left(\frac{2}{9\alpha}\right)^{\frac{9}{15}} + \frac{1}{3}\left(\frac{2}{9\alpha}\right)^{-\frac{6}{15}} = \left[\left(\frac{2}{9}\right)^{\frac{3}{5}} + \frac{1}{3}\left(\frac{9}{2}\right)^{\textcolor{black}{\frac{2}{5}}}\right]\left(\frac{23}{15}\right)^{\frac{2}{5}} >1.
\end{equation}
Thus for any $p\geq 0$ and any $x\in(0,1)$, $H(p,x)>1$ and therefore, from \eqref{eq:F1}, $F(d_{i},p)<0$ and therefore \eqref{eq:equiv1} holds.

Let us now come back to \eqref{eq:twocases} and deal with the last case: $p>3/2$. In this case \eqref{eq:twocases} becomes
\begin{equation}
\label{eq:twocases2}
    \begin{aligned}
\frac{2}{\textcolor{black}{3+p}}\left( -1+d_i^{-\frac{3}{2}}-d_i^{\frac{\textcolor{black}{3+2p}}{2}}+d_i^{\textcolor{black}{3+p}}\right)+\frac{1}{\textcolor{black}{1+p}}\left(1-d_i^{\textcolor{black}{1+p}} \right)<-d_i^{\frac{3}{2}}+d_i^{-\frac{3}{2}}.
    \end{aligned}
\end{equation}
Using that $d^{3+p} = d^{3/2+p}d^{3/2}$, this is equivalent to
\begin{equation}
\label{eq:twocases3}
    \begin{aligned}
    \frac{p+1}{p+3}(d_{i}^{-\frac{3}{2}}-1)+(1-d_{i}^{\frac{3}{2}})\left(1+\frac{2}{p+3}d_{i}^{\frac{3+2p}{2}}\right)-\frac{1}{1+p}\left(1-d_{i}^{p+1}\right)>0.
    \end{aligned}
\end{equation}
Then observe that the function $y\rightarrow (1-d^{y})y^{-1}$ is decreasing for $y>1$ (this follows from the fact that $e^{x}\geq 1+x$ and therefore $1\geq [1-\ln(d)y]d^{y}$ for $d\in(0,1)$). Therefore, applying this with $p+1\geq 3/2$ we obtain
\begin{equation}
    \frac{1}{1+p}(1-d^{p+1})\leq \frac{2}{3}(1-d^{\frac{3}{2}})
\end{equation}
and to show \eqref{eq:twocases3} (hence \eqref{eq:twocases} for $p>3/2$), it thus suffices to show that
\begin{equation}
    \begin{aligned}
    \frac{p+1}{p+3}(d_{i}^{-\frac{3}{2}}-1)+(1-d_{i}^{\frac{3}{2}})\left(\frac{1}{3}+\frac{2}{p+3}d_{i}^{\frac{3+2p}{2}}\right)>0,
    \end{aligned}
\end{equation}
which holds since all the terms are positive.
}
The proof of the inequality \eqref{intergaline} is complete.
\end{proof}
Based on Lemmas \ref{etaresult} and \ref{lem1}, we can prove the following
 \begin{lem}\label{lem2}
			For $i\in\{1,2,\cdots,n\}$, let $x^{i}_0$ be the critical point defined by \eqref{x0}, 
			the solution $\bar{\eta}_i(x)$ to
			\begin{equation}\label{bareta}
				\left\{
				\begin{aligned}
					&\bar{\eta}'_i=\left| \frac{\delta_{1i}\varphi_i}{\lambda_{1i}}+\frac{\gamma_{2i}}{\lambda_{2i}\varphi_i}\bar{\eta}^2_i \right|, \\
					&\bar{\eta}_i(0)=1,
				\end{aligned}
				\right.
			\end{equation}
exists on $[0,x^{i}_0)$.
		\end{lem}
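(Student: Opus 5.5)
Since $\delta_{1i},\gamma_{2i},\lambda_{1i},\lambda_{2i},\varphi_i>0$ by \eqref{positive}, the absolute value in \eqref{bareta} is inert. The equation is a Riccati equation
\[
\bar\eta_i'=a_i(x)+b_i(x)\bar\eta_i^2,\qquad a_i=\frac{\delta_{1i}\varphi_i}{\lambda_{1i}},\quad b_i=\frac{\gamma_{2i}}{\lambda_{2i}\varphi_i},
\]
with smooth coefficients on $[0,x_0^i)$. Local existence and uniqueness are therefore standard, and the only issue is to exclude blow-up before $x_0^i$. The plan is to compare $\bar\eta_i$ with the explicit function $\eta_{0i}$ of Lemma \ref{etaresult}, which solves the same ODE. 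Since $\lambda_{2i}(0)<\lambda_{1i}(0)$ and $\varphi_i(0)=1$, we have $\eta_{0i}(0)=\lambda_{2i}(0)/\lambda_{1i}(0)<1=\bar\eta_i(0)$. Set $w=\bar\eta_i-\eta_{0i}>0$ near $0$. Then $w$ solves a Bernoulli-type equation:
\[
w'=b_i(\bar\eta_i^2-\eta_{0i}^2)=2b_i\eta_{0i}w+b_iw^2 .
\]

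Bernoulli equations linearize under $u=1/w$. This gives
\[
u'=-2b_i\eta_{0i}u-b_i,\qquad u(0)=\frac{1}{1-\eta_{0i}(0)}=\frac{\lambda_{1i}(0)}{\lambda_{1i}(0)-\lambda_{2i}(0)},
\]
which integrates explicitly. Writing $E(s)=\exp\big(\int_0^s2b_i\eta_{0i}\big)$, we get
\[
u(x)E(x)=u(0)-\int_0^x E(s)\,b_i(s)\,ds.
\]
Using \eqref{eta0i}, $2b_i\eta_{0i}=2\gamma_{2i}\lambda_{2i}\varphi_i/(\lambda_{2i}\varphi_i\lambda_{1i})=2\gamma_{2i}/\lambda_{1i}$. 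Hence $E(s)=e^{\int_0^s 2\gamma_{2i}/\lambda_{1i}}$, and the integral is exactly the left-hand side of \eqref{intergaline}.

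Lemma \ref{lem1} then gives $u(x)E(x)>0$ for every $x\in[0,x_0^i)$. So $u$ stays positive and bounded away from $0$ on every compact subinterval $[0,X]\subset[0,x_0^i)$. It follows that $w=1/u$ is well defined and bounded there, and $\bar\eta_i=\eta_{0i}+w$ is finite on $[0,X]$. Since $\eta_{0i}$ is continuous on $[0,x_0^i)$ ($\lambda_{1i}$, $\lambda_{2i}$, $\varphi_i$ are smooth there, with $\lambda_{1i}>0$), the maximal solution cannot blow up before $x_0^i$. Rigorously: on the maximal interval of existence, $w=1/u$ holds by uniqueness, because the function $\eta_{0i}+1/u$ solves \eqref{bareta} wherever $u>0$. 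Conversely, $\eta_{0i}+1/u$ is defined on all of $[0,x_0^i)$ and solves the ODE there, so it is the solution.

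The main obstacle is the integral inequality, and it is already settled by Lemma \ref{lem1}. What remains is bookkeeping to check two things. First, the absolute value can indeed be dropped for both $\bar\eta_i$ and $\eta_{0i}$, since the positivity of the coefficients holds regardless of the sign of $\bar\eta_i$. Second, the identification $2b_i\eta_{0i}=2\gamma_{2i}/\lambda_{1i}$ is exact. As a by-product, the argument gives $\bar\eta_i>\eta_{0i}$ on the whole interval.
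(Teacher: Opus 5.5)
Your proposal is correct and follows the paper's proof essentially step by step. It uses the same Riccati-to-Bernoulli reduction through the special solution $\eta_{0i}$, the same substitution $1/(\bar\eta_i-\eta_{0i})$ giving a linear equation, the same simplification of the exponent to $2\gamma_{2i}/\lambda_{1i}$, and the same appeal to Lemma \ref{lem1} to keep the denominator positive on $[0,x_0^i)$; your closing uniqueness argument, which builds $\eta_{0i}+1/u$ on all of $[0,x_0^i)$, is a slightly cleaner substitute for the paper's comparison-principle step showing $u_i>0$.
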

		
		\begin{proof}			
    It can be shown from \eqref{varphii} that $\lambda_{1i}>\lambda_{2i}>0$. From \eqref{def02}, $\varphi_i>0$ and thanks to \eqref{positive}, we have 
    \begin{equation}
			\label{eq:etapositive}
   \frac{\delta_{1i}\varphi_i}{\lambda_{1i}}+\frac{\gamma_{2i}}{\lambda_{2i}\varphi_i}\bar{\eta}^2_i >0.
   \end{equation}
   Denote by \begin{equation}\label{GiI}
   G_i(x):=\frac{\delta_{1i}\varphi_i}{\lambda_{1i}},\quad I_i(x):=\frac{\gamma_{2i}}{\lambda_{2i}\varphi_i},
   \end{equation} 
   we rewrite the equation in \eqref{bareta} as
            \begin{equation}\label{ebareta1}
					\bar{\eta}'_i=G_i(x)+I_i(x)\bar{\eta}^2_i.
			\end{equation}
            Note that \eqref{ebareta1} is a Riccati equation. Since, from Lemma \ref{etaresult} that $\eta_{0i}$ is a special solution to \eqref{ebareta1}, 
            thus the general solution to \eqref{ebareta1} can be expressed as
\begin{equation}\label{change1}
\bar\eta_i=u_i+\eta_{0i}.
\end{equation}
Substituting \eqref{change1} to equation \eqref{ebareta1}, one has that $u_i$ should satisfy
\begin{equation}\label{Bernoulli}
u_i'(x)=I_i(x)u_i^2(x)+2I_i(x)\eta_{0i}(x)u_i(x)
\end{equation}
which is a Bernoulli equation. Moreover, from \eqref{bareta}, 
\begin{equation}
u_{i}(0) = \bar{\eta}_{i}'(0)-\frac{\lambda_{2i}(0)}{\lambda_{1i}(0)}>0.
\end{equation}
By comparison principle (see \cite{Hartman}), one has $u_{i}(x)>0$ as long as $u_i(x)$ exists (note that $\eta_{0i}$ is a positive function).
We now solve \eqref{Bernoulli} as follows. Let 
\begin{equation}\label{change2}
z_i=\displaystyle\frac{1}{u_i},
\end{equation}
then $u_{i}$ is a solution to \eqref{Bernoulli} if and only if $z_i$ satisfies
\begin{equation}\label{equ-z}
z_i'(x)+2I_i(x)\eta_{0i}z_i(x)=-I_i(x).
\end{equation}
The solution to \eqref{equ-z} is
\begin{equation}\label{sol-z}
z_i(x)=e^{-\int_{0}^{x}2I_i(\xi)\eta_{0i}(\xi)d\xi}\left(z_i(0)-\int_{0}^{x}e^{\int_{0}^{s}2I_i(\xi)\eta_{0i}(\xi)d\xi}I_i(s)ds\right).
\end{equation}
Noticing \eqref{change1} and \eqref{change2}, this means that as long as 
  \begin{equation}
       -\int_{0}^{x}e^{\int_{0}^{s}2I_i(\xi)\eta_{0i}(\xi)d\xi}I_i(s)ds+\frac{1}{\bar\eta_i(0)-\eta_{0i}(0)}\neq 0,
   \end{equation}
   the solution to \eqref{ebareta1} is 
\begin{equation}\label{baretafunction}
\bar{\eta}_i(x)=\eta_{0i}(x)+e^{\int_{0}^{x}2I_i(\xi)\eta_{0i}(\xi)d\xi}\frac{1}{-\int_{0}^{x}e^{\int_{0}^{s}2I_i(\xi)\eta_{0i}(\xi)d\xi}I_i(s)ds+\frac{1}{\bar\eta_i(0)-\eta_{0i}(0)}}.
\end{equation}
   We observe that $-\int_{0}^{x}e^{\int_{0}^{s}2I_i(\xi)\eta_{0i}(\xi)d\xi}I_i(s)ds$ is a monotonically decreasing function of $x$ with zero as its maximum value (note that $\gamma_{2i} >0$ from \eqref{positive}, thus $I_{i}>0$ from \eqref{GiI}).
   Noticing the expression of $\eta_{0i}$ in \eqref{eta0i}, one has
   \begin{equation}
   \frac{1}{\bar\eta_i(0)-\eta_{0i}(0)}=\frac{\lambda_{1i}(0)}{\lambda_{1i}(0)-\lambda_{2i}(0)}.
   \end{equation}
Moreover, noticing the expressions of $\eta_{0i}$ in \eqref{eta0i} and $I_i$ in \eqref{GiI}, we obtain that the solution to \eqref{bareta}, i.e.
\begin{equation}\label{exbareta}
\bar{\eta}_i(x)=\frac{\lambda_{2i}(x)}{\lambda_{1i}(x)}\varphi_i(x)+\frac{e^{\int_{0}^{x}2\frac{\gamma_{2i}(\xi)}{\lambda_{1i}(\xi)}d\xi}}{-\int_{0}^{x}e^{\int_{0}^{s}2\frac{\gamma_{2i}(\xi)}{\lambda_{1i}(\xi)}d\xi}\frac{\gamma_{2i}(s)}{\lambda_{2i}(s)\varphi_i(s)}ds+\frac{\lambda_{1i}(0)}{\lambda_{1i}(0)-\lambda_{2i}(0)}}
\end{equation}
exists on $x \in [0, x^{i}_0)$ thanks to Lemma \ref{lem1}.
                \end{proof}
\begin{rmk}\label{remf}
    Using the equivalent expression \eqref{mall>0} of $\gamma_{1i}$, $\gamma_{2i}$, $\delta_{1i}$ and $\delta_{2i}$, and the fact that $H_{i}^{*}V_{i}^{*} = Q_{i}$, a key nontrivial observation is that 
    the expression of \eqref{exbareta} can be further simplified as 
    \begin{equation}\label{explicitetai}
       \bar\eta_i(x)=m_i(x)\frac{\lambda_{2i}(x)}{\lambda_{1i}(x)}\varphi_i(x)=m_i(x)\eta_{0i}(x),
    \end{equation}
  where 
    \begin{equation}\label{mx}
  m_i(x)=\frac{\frac{1}{2}(H_i^{*}(x))^{\frac{\textcolor{black}{3+2p}}{2}}+\frac{\sqrt{g}}{\textcolor{black}{(3+p)}Q_i}(H_i^{*}(x))^{\textcolor{black}{3+p}}+\frac{\textcolor{black}{(1+p)}\sqrt{g}}{\textcolor{black}{2(3+p)}Q_i}(H_i^{*}(0))^{\textcolor{black}{3+p}}+\frac{Q_i}{2\sqrt{g}}((H^*_i(x))^{\textcolor{black}{p}}-(H^*_i(0))^{\textcolor{black}{p}})}{-\frac{1}{2}(H_i^{*}(x))^{\frac{\textcolor{black}{3+2p}}{2}}+\frac{\sqrt{g}}{\textcolor{black}{(3+p)}Q_i}(H_i^{*}(x))^{\textcolor{black}{3+p}}+\frac{\textcolor{black}{(1+p)}\sqrt{g}}{\textcolor{black}{2(3+p)}Q_i}(H_i^{*}(0))^{\textcolor{black}{3+p}}+\frac{Q_i}{2\sqrt{g}}((H^*_i(x))^{\textcolor{black}{p}}-(H^*_i(0))^{\textcolor{black}{p}})}
  \end{equation}
  We leave the proof to Appendix \ref{app:nontrivial}.
\end{rmk}
               With the existence of $\bar{\eta}_i$ on $[0,x_0^i)$, we can show the following
\begin{lem}
\label{lem:cj}
For $i\in\{1,2,\cdots,n\}$, one has
      \begin{equation}
      \varphi_i(x)>\bar{\eta}_i(x),\quad \forall x\in(0,x^i_0),
\end{equation}
where $\bar\eta_i$ is the solution to \eqref{bareta}
  and $\varphi_i$ is defined in \eqref{varphii}.
\end{lem}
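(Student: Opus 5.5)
Writing $\bar\eta_i=m_i\eta_{0i}$ as in Remark \ref{remf}, the claim $\varphi_i>\bar\eta_i$ is equivalent to $m_i(x)\lambda_{2i}(x)<\lambda_{1i}(x)$, i.e. $m_i(x)<\lambda_{1i}(x)/\lambda_{2i}(x)$ on $(0,x_0^i)$. Rather than working with the explicit formula \eqref{mx}, I would use a comparison argument based on the ODE. Set $w_i:=\varphi_i-\bar\eta_i$. At $x=0$ we have $\varphi_i(0)=1=\bar\eta_i(0)$, so $w_i(0)=0$, and the task is to show that $w_i$ becomes positive immediately and never returns to zero.

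\textbf{Derivative of $\varphi_i$.} From \eqref{varphi12},
\[
\varphi_i'=\varphi_i\Big(\frac{\gamma_{1i}}{\lambda_{1i}}+\frac{\delta_{2i}}{\lambda_{2i}}\Big).
\]
By \eqref{eq:etapositive} the absolute value in \eqref{bareta} is inactive, so $\bar\eta_i'=G_i+I_i\bar\eta_i^2$. At any point where $\bar\eta_i=\varphi_i$ this gives
\[
w_i'=\varphi_i\Big(\frac{\gamma_{1i}}{\lambda_{1i}}+\frac{\delta_{2i}}{\lambda_{2i}}-\frac{\delta_{1i}}{\lambda_{1i}}-\frac{\gamma_{2i}}{\lambda_{2i}}\Big).
\]

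\textbf{Sign of this combination.} Using \eqref{mall>0}, the combination equals $-\frac{(H_i^*)_x}{H_i^*}$ times
\[
\lambda_{2i}\Big[-\tfrac{1}{2\lambda_{1i}}-\tfrac{p}{\sqrt{gH_i^*}}\Big]+\lambda_{1i}\Big[\tfrac{1}{2\lambda_{2i}}+\tfrac{p}{\sqrt{gH_i^*}}\Big]
=\frac{\lambda_{1i}^2-\lambda_{2i}^2}{2\lambda_{1i}\lambda_{2i}}+\frac{p(\lambda_{1i}-\lambda_{2i})}{\sqrt{gH_i^*}}.
\]
Since $\lambda_{1i}>\lambda_{2i}>0$ and $p\ge0$, this bracket is positive. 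Moreover $(H_i^*)_x<0$, so $-\frac{(H_i^*)_x}{H_i^*}>0$. Hence at every point where $w_i=0$ we get $w_i'>0$. This applies in particular at $x=0$, so $w_i>0$ on some interval $(0,\epsilon)$.

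\textbf{Conclusion.} Suppose $w_i$ vanished again, and let $x_1\in(0,x_0^i)$ be its first zero after $0$. Then $w_i>0$ on $(0,x_1)$ and $w_i(x_1)=0$, which forces $w_i'(x_1)\le0$. This contradicts the computation above, which gives $w_i'(x_1)>0$. Hence $w_i>0$ on all of $(0,x_0^i)$, where $\bar\eta_i$ exists by Lemma \ref{lem2}.

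\textbf{Main obstacle.} The only delicate step is the algebra with \eqref{mall>0}: one must check that the $1/V_i^*$ terms cancel, which they do because $\gamma_{1i}$ and $\delta_{1i}$ share the same $1/V_i^*$ coefficient, and likewise $\gamma_{2i}$ and $\delta_{2i}$. If a sign slip appears there, the fallback is to verify $m_i<\lambda_{1i}/\lambda_{2i}$ directly from \eqref{mx}, viewed as a function of $H_i^*(x)$.
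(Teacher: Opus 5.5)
Your proof is correct and follows essentially the paper's comparison argument. Both start from $\varphi_i(0)=\bar\eta_i(0)=1$ and use the strict positivity of $\frac{\gamma_{1i}}{\lambda_{1i}}+\frac{\delta_{2i}}{\lambda_{2i}}-\frac{\delta_{1i}}{\lambda_{1i}}-\frac{\gamma_{2i}}{\lambda_{2i}}$; your computation via \eqref{mall>0} agrees with \eqref{eq:comparison}. The only differences are cosmetic: you get the contradiction from $w_i'(x_1)>0$ at the first touching point, whereas the paper bounds $\bar\eta_i^2/\varphi_i^2\le 1$ on the whole interval before touching and deduces that $\varphi_i-\bar\eta_i$ is increasing there. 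Your opening reformulation via $m_i$ is never used and can be dropped.
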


\begin{proof}
The proof of this lemma is based on a comparison argument. We will drop the index $i$ for clarity given that $i$ is fixed all along the proof. Note first that $\varphi(0) = \bar{\eta}(0)=1$. Note also that, from \eqref{varphii},
\begin{equation}
\label{eq:varphi'}
    \varphi'= \left(\frac{\gamma_{1}}{\lambda_{1}}+\frac{\delta_{2}}{\lambda_{2}}\right)\varphi,
\end{equation}
and from \eqref{bareta} together with \eqref{eq:etapositive},
\begin{equation}
\label{eq:eta'}
    \bar{\eta}' = \left(\frac{\delta_{1}}{\lambda_{1}}+\frac{\gamma_{2}}{\lambda_{2}}\frac{\bar{\eta}^{2}}{\varphi^{2}}\right)\varphi.
\end{equation}
Let us compare the two right-hand sides of \eqref{eq:varphi'} and \eqref{eq:eta'}: from \eqref{all>0}, we have
\begin{equation}
\label{eq:comparison}
\begin{split}
\frac{\gamma_{1}}{\lambda_{1}}+\frac{\delta_{2}}{\lambda_{2}} - \left(\frac{\delta_{1}}{\lambda_{1}}+\frac{\gamma_{2}}{\lambda_{2}}\right) 
=&\frac{gCV^{*2}}{H^*} \left[\frac{1}{2}\left(\frac{1}{\lambda_{2}^{2}}-\frac{1}{\lambda_{1}^{2}}\right)+\frac{\textcolor{black}{p}}{\sqrt{gH^*}}\left(\frac{1}{\lambda_{2}}-\frac{1}{\lambda_{1}}\right) \right]>0,
\end{split}
\end{equation}
where we used \eqref{subcritical}.
In particular, this means that $\bar{\eta}'(0) <\varphi'(0)$ and, as a consequence, there exists $x_{1}\in(0,x_{0})$ (recall that $x_{0}$ is defined in \eqref{dcr}) such that
\begin{equation}
    \bar{\eta}(x)<\varphi(x),\;\; \forall x\in (0,x_{1}].
\end{equation}
Assume by contradiction that there exists $x_{2}\in(x_{1},x_{0})$ such that $\bar\eta(x_{2})= \varphi(x_{2})$. Then, without loss of generality we can assume that for any $x\in[x_{1},x_{2})$, $\bar{\eta}(x)<\varphi(x)$. This implies in particular, using \eqref{positive} and \eqref{eq:comparison},
\begin{equation}
    \bar{\eta}' = \left(\frac{\delta_{1}}{\lambda_{1}}+\frac{\gamma_{2}}{\lambda_{2}}\frac{\bar{\eta}^{2}}{\varphi^{2}}\right)\varphi \leq \left(\frac{\delta_{1}}{\lambda_{1}}+\frac{\gamma_{2}}{\lambda_{2}}\right)\varphi < \left(\frac{\gamma_{1}}{\lambda_{1}}+\frac{\delta_{2}}{\lambda_{2}}\right)\varphi = \varphi'\text{ on }[x_{1},x_{2}).
\end{equation}
As a consequence, $\varphi-\bar\eta$ is increasing on $[x_{1},x_{2})$ and thus
\begin{equation}
    0=\varphi(x_{2}) - \bar{\eta}(x_{2}) >\varphi(x_{1}) - \bar{\eta}(x_{1})>0,
\end{equation}
which is in contradiction with the definition of $x_{2}$.
\end{proof}

    With Lemma \ref{lem2} and Lemma \ref{lem:cj} in hand, we are now ready to prove Theorem \ref{thm1}.
	\begin{proof}
		We construct the following Lyapunov function
		\begin{equation}
\label{eq:deflyap}
V(t)=\sum_{i=1}^{n}\int_{0}^{L_i}\left(f_{1i}(x)y_{1i}^2(t,x)+f_{2i}(x)y_{2i}^2(t,x)\right) dx,
		\end{equation}
		where
		\begin{equation}\label{def02}
				f_{1i}(x)=\alpha_i \varphi_{1i}^2\frac{1}{\lambda_{1i}\eta_i}, \quad f_{2i}(x)=\alpha_i \varphi_{2i}^2\frac{\eta_i}{\lambda_{2i}}, 
		\end{equation}
		where $\varphi_{1i}$ and $\varphi_{2i}$ are defined in \eqref{varphi12}, $\alpha_i>0$ are the weight coefficients to be chosen, and $\eta_i$ is defined as follows: $\eta_{1}$
        is the solution to 
        \begin{equation}\label{eta01}
			\left\{
			\begin{aligned}
				&\eta'_1=\left| \frac{\delta_{11}\varphi_1}{\lambda_{11}}+\frac{\gamma_{21}}{\lambda_{21}\varphi_1}\eta^2_1 \right|+\varepsilon, \\
				&\eta_1(0)=\frac{\lambda_{21}(0)}{\lambda_{11}(0)}+\varepsilon.
			\end{aligned}
			\right.
		\end{equation}
      and for $j\in\{2,3,\cdots,n\}$, $\eta_j$ is the solution to
		\begin{equation}\label{eta0j}
			\left\{
			\begin{aligned}
				&\eta'_j=\left| \frac{\delta_{1j}\varphi_j}{\lambda_{1j}}+\frac{\gamma_{2j}}{\lambda_{2j}\varphi_j}\eta^2_j \right|+\varepsilon, \\
				&\eta_j(0)=1+\varepsilon,
			\end{aligned}
			\right.
		\end{equation}
	where $\varepsilon>0$ is a constant to be chosen.
    \textcolor{black}{In particular, since $\eta_{i}$ are strictly increasing, note from \eqref{def02} that $f_{1i}\lambda_{1i}\varphi_{1i}^{-2}$ is (strictly) decreasing while $f_{2i}\lambda_{2i}\varphi_{2i}^{-2}$ is strictly increasing.
  Though the weight functions defined in \eqref{def02} seem quite implicit, in fact, we have much more information on them. Note from Lemma \ref{etaresult} and Lemma \ref{lem2} that when $\varepsilon=0$, these $\eta_i$ functions are well-defined on $[0,L_{i}]$. Indeed, from Lemma \ref{etaresult} and Remark \ref{remf}, when $\varepsilon=0$ these functions have, remarkably, an explicit closed-form expression given by \eqref{eta0i} and \eqref{explicitetai}--\eqref{mx}. This means, in particular, that the weight functions $f_{1i}$ and $f_{2i}$ also have explicit expressions when $\varepsilon=0$ given by
\begin{equation}
\label{eq:explicit-weights}
\begin{split}
f_{11}^{0} &= \alpha_{1}\frac{\varphi_{11}\varphi_{21}}{\lambda_{21}},\;\;
f_{21}^{0} = \alpha_{1}\frac{\varphi_{11}\varphi_{21}}{\lambda_{11}},\\
f_{1j}^{0} &= \alpha_{j}\frac{\varphi_{1j}\varphi_{2j}}{\lambda_{2j}m_{j}},\;\;
f_{2j}^{0} = \alpha_{j}\frac{m_{j}\varphi_{1j}\varphi_{2j}}{\lambda_{1j}},\;\;\forall j\in\{2,3,...,n\},
\end{split}
\end{equation}
where $m_{j}$ is given by \eqref{mx}.
}
    Therefore, there exists $\varepsilon_{0}>0$ such that for any $\varepsilon \in (0,\varepsilon_{0})$, $\eta_{i}$ exists on $[0,L_{i}]$ for any $i\in\{1,2,\cdots,n\}$ (see for instance \cite[Lemma 4.1-4.2]{C122} or \cite{Hartman}).
  It is straightforward to verify the existence of a constant $C >0$ depending only on the parameters of the system such that
  \begin{equation}
  \label{eq:equiv}
\frac{1}{C}\sum_{i=1}^{n}\| (h_i(t, \cdot), v_i(t, \cdot))\|^2_{L^2((0,L_i);\mathbb{R}^2)}
  \leq V(t) \leq C \sum_{i=1}^{n}\| (h_i(t, \cdot), v_i(t, \cdot))\|^2_{L^2((0,L_i);\mathbb{R}^2)}
   \end{equation}
   for any $t\in[0, +\infty)$ {\color{black} and
   \begin{equation}
       \|f_{1i}-f_{1i}^{0}\|_{L^{\infty}}\leq C\varepsilon,\;\; \|f_{2i}-f_{2i}^{0}\|_{L^{\infty}}\leq C\varepsilon,\;\;\forall i\in\{1,2,...,n\}
   \end{equation}
   which means that the weights can be made arbitrarily close to the explicit expressions \eqref{eq:explicit-weights} provided $\varepsilon$ is sufficiently small. 
   }
Similarly to \cite{HS}, the time derivative of $V$ along the trajectories of \eqref{chaform} is
		\begin{equation}
        \label{eq:computelyap}
			\begin{aligned}
				\dot V(t)=&\sum_{i=1}^{n}\int_{0}^{L_i}\left[2f_{1i}y_{1i}(-\lambda_{1i}\partial_{x}y_{1i}-\gamma_{1i}y_{1i}-\delta_{1i}
y_{2i})+2f_{2i}y_{2i}(\lambda_{2i}\partial_{x}y_{2i}-\gamma_{2i}y_{1i}-\delta_{2i}
y_{2i})\right] dx \\
				=&-B(t)-\sum_{i=1}^{n}\int_{0}^{Li} (y_{1i},y_{2i})N_i(x)(y_{1i},y_{2i})^{T}dx,
			\end{aligned}
		\end{equation}
  where
  \begin{equation}\label{BC1}
			\begin{aligned}
				B(t)=\sum_{i=1}^{n}\left[f_{1i}(L_i)\lambda_{1i}(L_i)y^2_{1i}(t,L_i)-f_{2i}(L_i)\lambda_{2i}(L_i)y^2_{2i}(t,L_i)-f_{1i}(0)\lambda_{1i}(0)y^2_{1i}(t,0)+f_{2i}(0)\lambda_{2i}(0)y^2_{2i}(t,0)\right]
			\end{aligned}
		\end{equation}
  is the term that incorporates information regarding the boundary condition and
  	\begin{equation}\label{inner01}
			\begin{aligned}
					&N_i(x)=
				\begin{pmatrix}
					-(f_{1i}\lambda_{1i})_x+2f_{1i}\gamma_{1i} & f_{1i}\delta_{1i}+f_{2i}\gamma_{2i} \\
					f_{1i}\delta_{1i}+f_{2i}\gamma_{2i}  &  (f_{2i}\lambda_{2i})_x+2f_{2i}\delta_{2i}
				\end{pmatrix}.
			\end{aligned}
		\end{equation}

	One can easily check thanks to \eqref{def02}--\eqref{eta01} (see for instance \cite{hayat2021exponential}) that
\begin{equation}\label{detN}
			\begin{aligned}
				&\det N_i>0,
			\end{aligned}
		\end{equation}
    and $N_i$ is positive definite and therefore there exists $\nu >0$ such that for any $(w_{1i},w_{2i})\in L^{2}(0,L_{i})$,
        \begin{equation}
        \label{eq:nu}
            \sum_{i=1}^{n}\int_{0}^{Li} (w_{1i},w_{2i})N_i(x)(w_{1i},w_{2i})^{T}dx \geq \nu \|(w_{1i},w_{2i})\|_{L^{2}(0,L_i)}^{2}.
        \end{equation}
Hence, the main challenge is to show that $B(t)$ is positive definite.
Denote by
  \begin{equation}\label{defZ&W}
			\begin{aligned}
				&\lambda_{1i}(x)f_{1i}(x)-\lambda_{2i}(x)f_{2i}(x)
				=:Z_i(x)=:\alpha_i\widetilde{Z}_i(x),\\
				&\lambda_{1i}(x)f_{1i}(x)+\lambda_{2i}(x)f_{2i}(x)=:W_i(x)=:\alpha_i\widetilde{W}_i(x),
			\end{aligned}
		\end{equation}

  and
  \begin{equation}
r=\begin{pmatrix} v_2(t, 0), & v_3(t, 0), & \cdots, & v_n(t, 0), & h_1(t, L_1)  \end{pmatrix}^T.
  \end{equation}
\begin{rmk}[Dependency in $\alpha_{i}$]
\label{rem:depalpha}
Note that, from \eqref{def02}, $\widetilde{Z}_i(x)$ and $\widetilde{W}_i(x)$ defined in \eqref{defZ&W} do not depend on $\alpha_{i}$. This motivates the introduction of these notations.
\end{rmk}
 Substituting boundary conditions \eqref{boul01} and \eqref{nbou01} into $B(t)$,  we obtain
	\begin{align}\label{B(t)}
				B(t)=&f_{11}(L_1)\lambda_{11}(L_1)\left( \sum_{j=2}^{n}v_j(t,0)+h_1(t,L_1)\sqrt{\frac{g}{H_1^*(L_1)}}\right)^2\nonumber\\
    &-f_{21}(L_1)\lambda_{21}(L_1)\left( \sum_{j=2}^{n}v_j(t,0)-h_1(t,L_1)\sqrt{\frac{g}{H_1^*(L_1)}}\right)^2
    \nonumber\\
    &+\sum_{j=2}^n \Bigg[-f_{1j}(0)\lambda_{1j}(0)\left(v_j(t, 0) +h_1(t, L_1)\sqrt{\frac{g}{H_j^*(0)}}\right)^2 \nonumber\\
    &+f_{2j}(0)\lambda_{2j}(0)\left(v_j(t, 0) -h_1(t, L_1)\sqrt{\frac{g}{H_j^*(0)}}\right)^2\Bigg]\nonumber\\
    &+\Bigg[-f_{11}(0)\lambda_{11}(0)\left(v_1(t, 0)-\frac{H_1^*(0)}{V_1^*(0) }\sqrt{\frac{g}{H_1^*(0)}}v_1(t, 0) \right)^2 \nonumber\\
    &+f_{21}(0)\lambda_{21}(0)\left(v_1(t, 0)+\frac{H_1^*(0)}{V_1^*(0) }\sqrt{\frac{g}{H_1^*(0)}}v_1(t, 0) \right)^2 \Bigg]\nonumber\\
    &+\sum_{j=2}^n y_{2j}^2(L_j)\left(f_{1j}(L_j)\lambda_{1j}(L_j)c_j^2-f_{2j}(L_j)\lambda_{2j}(L_j)\right)
    \nonumber\\=&r^T M r+F_1v^2_1(t,0)+\sum_{j=2}^n y_{2j}^2(L_j)\left(f_{1j}(L_j)\lambda_{1j}(L_j)c_j^2-f_{2j}(L_j)\lambda_{2j}(L_j)\right),
			\end{align}
		where
  \begin{equation}M=\begin{pmatrix}\label{MM}
      \theta_2 & Z_1(L_1) & Z_1(L_1)& \cdots & Z_1(L_1) & \omega_2 \\
      Z_1(L_1) & \theta_3 & Z_1(L_1) & \cdots & Z_1(L_1) &\omega_3\\
       Z_1(L_1) & Z_1(L_1) & \theta_4 & \cdots & Z_1(L_1) & \omega_4\\
       \vdots & \vdots & \vdots & \ddots &\vdots &\vdots\\
       Z_1(L_1) & Z_1(L_1) & Z_1(L_1) & \cdots & \theta_n & \omega_n \\
       \omega_2 & \omega_3 & \omega_4  & \cdots & \omega_n & \zeta
  \end{pmatrix} \in \mathbb{R}^{n\times n}
  \end{equation}
	with	\begin{equation}\label{theta2zeta}
			\begin{aligned}
				&	\theta _j=	Z_1(L_1)-Z_j(0),\\
				&\omega_j=\frac{\sqrt{gH_1^*(L_1)}(W_1(L_1)-W_j(0))}{H_1^*(L_1)},\\
				&\zeta=\frac{g}{H_1^*(L_1)}\left(Z_1(L_1)-\left(\sum_{j=2}^{n}Z_j(0)\right)\right),\\
				&F_1=\frac{1}{V_1^{*2}(0)}\left(\lambda_{21}(0)\lambda_{11}^2(0)f_{21}(0)-\lambda_{11}(0)\lambda_{21}^2(0)f_{11}(0)\right).\\
			\end{aligned}
		\end{equation}
Firstly, for the second term in \eqref{B(t)} which corresponds to the beginning of the channel 1, from \eqref{def02}, \eqref{eta01} and \eqref{theta2zeta}
\begin{equation}\label{defF}
  \begin{aligned}
   F_1&=\frac{\alpha_1 }{V_1^{*2}(0)} \left( \frac{\lambda^2_{11}(0)\eta^2_1(0)-\lambda^2_{21}(0)}{\eta_1(0)}\right)\\
   &=\frac{\alpha_1 }{V_1^{*2}(0)\eta_1(0)} \left( \lambda^2_{11}(0)\varepsilon^2+2\varepsilon \lambda_{21}(0)\lambda_{11}(0)\right)>0 .
   \end{aligned}
  \end{equation}
Let us look at the last terms in \eqref{B(t)} which correspond to the end of the channels $2$ to $n$. 

Note that, from \eqref{eq:hypki}, $k_{j}\sqrt{H_{j}^{*}/g}$ is located outside the roots of the polynomial
\begin{equation}\label{eta-phi01}
    X^{2}\left(1-\frac{\bar{\eta}_{j}^{2}}{\varphi_{j}^{2}}\right)+2X\left(1+\frac{\bar{\eta}_j^{2}}{\varphi_{j}^{2}}\right)+\left(1-\frac{\bar{\eta}_{j}^{2}}{\varphi_{j}^{2}}\right).
\end{equation}
From Lemma \ref{lem:cj} and the observation on $k_{j}\sqrt{H_{j}^{*}/g}$,
\begin{equation}\label{eta-phi02}
    \frac{H_{j}^{*}}{g}k_{j}^{2}\left(1-\frac{\bar{\eta}_{j}^{2}}{\varphi_{j}^{2}}\right)+2\sqrt{\frac{H_{j}^{*}}{g}}k_{j}\left(1+\frac{\bar{\eta}_j^{2}}{\varphi_{j}^{2}}\right)+\left(1-\frac{\bar{\eta}_{j}^{2}}{\varphi_{j}^{2}}\right)>0,
\end{equation}
and therefore
\begin{equation}\label{eta-phi03}
    \left(1+\sqrt{\frac{H_{j}^{*}}{g}}k_{j}\right)^{2}>\frac{\bar{\eta}_j^{2}}{\varphi_{j}^{2}}\left(1-\sqrt{\frac{H_{j}^{*}}{g}}k_{j}\right)^{2},
\end{equation}
which gives 
from \eqref{eqcj} that
\begin{equation}
\label{eq:computec}
     c_j^2>\frac{\bar\eta_j^2(L_j)}{\varphi_j^2(L_j)}.
\end{equation}
  Noticing \eqref{eta0j} and
  from the continuity of $\varepsilon\rightarrow \eta_{j}$, there exists $\varepsilon_1>0$ such that for any $\varepsilon\in(0,\varepsilon_1)$, one has for $j=2,3,\cdots,n$ that
\begin{equation}
			c_j^2>\frac{\eta_j^2(L_j)}{\varphi_j^2(L_j)}.
   \end{equation}
Since from definition \eqref{def02}
\begin{equation}\label{def002}
			\frac{\eta_j^2(L_j)}{\varphi_{j}^2(L_j)}=\frac{f_{2j}(L_j)\lambda_{2j}(L_j)}{f_{1j}(L_j)\lambda_{1j}(L_j)},
   \end{equation}
		we obtain that the terms $\sum_{j=2}^n y_{2j}^2(L_j)\left(f_{1j}(L_j)\lambda_{1j}(L_j)c_j^2-f_{2j}(L_j)\lambda_{2j}(L_j)\right)$ of \eqref{B(t)} are positive definite (with respect to $(y_{22},\cdots,y_{2j})^{T})$.\\

From Lemma \ref{etaresult}, the definition \eqref{def02}--\eqref{eta01} and \eqref{defZ&W}, and using the continuity of $\varepsilon\rightarrow \eta_1$, there exists $0<\varepsilon_{2}<\varepsilon_1$ such that for any $\varepsilon\in(0,\varepsilon_{2})$
		\begin{equation}
			Z_1(L_1)=	(\lambda_{11}f_{11}-\lambda_{21}f_{21})(L_1)=\left(\alpha_1 \varphi_{11}^2\frac{1}{\eta_1}-\alpha_1 \varphi_{21}^2\eta_1\right)(L_{1})=\left(\alpha_1 \varphi_{21}^2\left(\frac{\varphi_1^2-\eta_1^2}{\eta_1}\right)\right)(L_{1})>0.
		\end{equation}
        We are left to prove that by properly choosing the parameters $\alpha_i$, the matrix $M$ can be made positive definite.
        Noticing \eqref{defZ&W}, we now simplify the $n\times n$ matrix $M$ by selecting the specific parameters $\alpha_i$ as
  \begin{equation}
  \alpha_1=1,\quad
  \alpha_j=\frac{\widetilde{W_1}(L_1)}{\widetilde{W_j}(0)},
  \end{equation}
such that
  \begin{equation}
      W_1(L_1)=W_j(0),
  \end{equation}
which means $\omega_{j}=0$ from \eqref{theta2zeta}. Note that this is possible since the $\widetilde{W_i}$ do not depend on the $\alpha_{i}$ (see Remark \ref{rem:depalpha}). Thus, the matrix $M$ can be greatly simplified as
  \begin{equation}\label{overlineM}
    \overline M=\begin{pmatrix}
      \theta_2 & Z_1(L_1) & Z_1(L_1)& \cdots & Z_1(L_1) & 0 \\
      Z_1(L_1) & \theta_3 & Z_1(L_1) & \cdots & Z_1(L_1) &0\\
       Z_1(L_1) & Z_1(L_1) & \theta_4 & \cdots & Z_1(L_1) & 0\\
       \vdots & \vdots & \vdots & \ddots &\vdots &\vdots\\
       Z_1(L_1) & Z_1(L_1) & Z_1(L_1) & \cdots & \theta_n & 0 \\
       0& 0 & 0  & \cdots & 0& \zeta
  \end{pmatrix}.
  \end{equation}
  Then we need to analyze the potential requirements for $Z_j(0)$ \textcolor{black}{so that the matrix $\overline M$ is positive definite}.
To that end, \textcolor{black}{we focus on the $k$-th $(k=2,\cdots,n-1)$ order principal minor determinant. Thanks to the special structure of the matrix $\overline M$,
we define the matrices $\overline M_k$ $(k=2,\cdots,n-1)$ as follows
 \begin{equation}\label{overlineMk}
    \overline M_k=\begin{pmatrix}
      \theta_2 & Z_1(L_1) & Z_1(L_1)& \cdots & Z_1(L_1)  \\
      Z_1(L_1) & \theta_3 & Z_1(L_1) & \cdots & Z_1(L_1)\\
       Z_1(L_1) & Z_1(L_1) & \theta_4 & \cdots & Z_1(L_1) \\
       \vdots & \vdots & \vdots & \ddots &\vdots \\
       Z_1(L_1) & Z_1(L_1) & Z_1(L_1) & \cdots & \theta_{k+1} 
  \end{pmatrix}
  \end{equation} and make the following series of transformation for matrix $\overline M_k$ that will not change its determinant.}
\begin{itemize}
    \item The $2$-nd to the $k$-th row minus the first row respectively;
    \item Multiply the $2$-nd to the $k$-th row by $\frac{Z_1(L_1)}{Z_l(0)}$($l=3, \cdots, \textcolor{black}{k+1}$) respectively, then add these $k-1$ rows to the first row, and then restore the $2$-nd to the $k$-th rows;
    \item Multiply the $2$-nd to the $k$-th column by $\frac{Z_2(0)}{Z_l(0)}$($l=3, \cdots, \textcolor{black}{k+1}$) respectively, add these $k-1$ columns to the first columns, and then restore the $2$-nd to the $k$-th columns.
\end{itemize}
Then we obtain the transformed matrix \textcolor{black}{$\widetilde{M}_k$} as
\textcolor{black}{
 \begin{equation}
      \widetilde{M}_k=\begin{pmatrix}
     \widetilde{\theta}_2 & 0 & 0& \cdots & 0  \\
      0 & -Z_3(0) &0 & \cdots & 0 \\
      0 & 0 & -Z_4(0) & \cdots & 0 \\
       \vdots & \vdots & \vdots & \ddots &\vdots \\
       0 & 0 & 0 & \cdots & -Z_{k+1}(0) 
  \end{pmatrix},
  \end{equation}
}
where
\begin{equation}
\begin{aligned}
\widetilde{\theta}_{2}&=Z_{1}(L_{1})-Z_{2}(0)+\sum\limits_{l=3}^{\textcolor{black}{k+1}}\frac{Z_{2}(0)Z_{1}(L_{1})}{Z_{l}(0)},
\end{aligned}
\end{equation}

Next, we emphasize that all of $Z_j(0)$ \textcolor{black}{$(j=2,\cdots,k+1)$ being  negative is a sufficient condition for the determinant of the diagonal matrix $\widetilde{M}_k$ $(k=2,\cdots,n-1)$ being positive}. Indeed, it is easy to check that $\widetilde{\theta}_2$ is positive under $Z_j(0)<0$. 	
Moreover, from \eqref{def02} and \eqref{defZ&W}, one has 
\begin{equation}\label{equiv}
Z_j(0)<0\Longleftrightarrow  \eta_j(0)>1,
\end{equation}
which together with the equations \eqref{eta0j} satisfied by $\eta_j$, we get immediately that \textcolor{black}{the determinant of the diagonal matrix $\widetilde{M}_k$ $(k=2,\cdots,n-1)$ is positive.  Moreover, from \eqref{theta2zeta}, $\theta_2$ and $\zeta$ are also positive (using that $Z_j(0)<0$), which together with the positive definiteness of $\widetilde M_k$ shows that $\overline M$ is positive definite. As a consequence,} $B(t)$ is positive definite and therefore, combining this with \eqref{eq:nu} and \eqref{eq:equiv}, there exists $\mu>0$ such that

\begin{equation}
    \dot V(t) \leq -\mu V(t),
\end{equation}
which ends the proof of Theorem \ref{thm1}.
	\end{proof}
\begin{rmk}\label{cr1}
We point out that though it would seem \emph{a priori} that condition \eqref{eq:hypki} required for the control parameters depends on the value of $(H^*_j,V^*_j)$ for all $x\in[0,L_j]$, thanks to the key observation of 
Remark \ref{remf}, it only depends on the values of $H^*_j$ at two ends $x=0$ and $x=L_j$. More precisely, condition \eqref{eq:hypki} is equivalent to condition \eqref{paraphy} defined
in Theorem \ref{thm0} for the nonlinear system.
Indeed, one just need to substitute the explicit expression of $\bar\eta_j$ in \eqref{explicitetai} to the bounds of condition \eqref{eq:hypki}, direct computations will lead to the result.
\end{rmk}

 \section{A Lyapunov Function for the Original Nonlinear System of the Star-shaped Model}
 \label{sec:nonlinear} 
We use the following change of variables
\begin{equation}
\label{eq:nlchangevar}
y_{1i} = V_i-V^*_i + 2\sqrt{g H_i}-2\sqrt{g H_{i}^{*}},\;\; y_{2i} = V_i-V^*_i - 2\sqrt{g H_i}+2\sqrt{g H_{i}^{*}}.
\end{equation}
Note that the linearized version of this change of variable is exactly \eqref{RiemannIn} 
with the definition of the disturbances \eqref{disturbance}.
Denote by $\textbf{y}_{i}= (y_{1i},y_{2i})^{T}$, the nonlinear system \eqref{sys01} becomes
\begin{equation}
	\label{eq:sysdiag}
	\begin{split}
		\partial_{t}\textbf{y}_{i} + \Lambda_{i}(\textbf{y}_{i},x)\partial_{x}\textbf{y}_{i} +B_i(\textbf{y}_i,x)=0,
	\end{split}
\end{equation}
where the diagonal matrices $\Lambda_{i}(\textbf{y}_{i},x)$ are
\begin{equation}
\Lambda_{i}(\textbf{y}_i,x)=\begin{pmatrix}
\Lambda_{1i}(\textbf{y}_i,x)&0\\0&\Lambda_{2i}(\textbf{y}_i,x)
\end{pmatrix}
\end{equation}
with
\begin{equation}
	\label{eq:lambda}
	\begin{split}
		\Lambda_{1i}(\mathbf{0},x)=\lambda_{1i}(x), \\
		\Lambda_{2i}(\mathbf{0},x)= \lambda_{2i}(x),
	\end{split}
\end{equation}
and $B_i(\textbf{y}_i,x)$ satisfies
\begin{equation}
B_i(\textbf{0},x)=\textbf{0},\quad \frac{\partial B_i}{\partial \textbf{y}_i}(\textbf{0},x)=\begin{pmatrix}\gamma_{1i}(x)&\delta_{1i}(x)\\
\gamma_{2i}(x)&\delta_{2i}(x)\end{pmatrix}.
\end{equation}
The boundary condition \eqref{bou01} can be rewritten as 
\begin{equation}\label{nbc}
		\mathbf{y}^{in}(t) 
	=\mathcal{H}(\mathbf{y}^{out}(t)),
\end{equation}
where
\begin{equation}
		\begin{split}
			&\mathbf{y}^{in}(t) =(y_{11}(t,0),y_{21}(t,L_1),y_{12}(t,0),y_{22}(t,L_2),\cdots,y_{1n}(t,0),y_{2n}(t,L_n)),\\
			&\mathbf{y}^{out}(t) = (y_{11}(t,L_1),y_{21}(t,0),y_{12}(t,L_2),y_{22}(t,0),\cdots,y_{1n}(t,L_n),y_{2n}(t,0)),
		\end{split}
\end{equation}
and 
\begin{equation}
	\mathcal{H}(\textbf{0})=\textbf{0}.
\end{equation}
Since we use a combination of Riemman invariants and physical variables in the analysis of the Lyapunov function, we will not give the explicit expression $\mathcal{H}$ here for the sake of simplicity, which follows directly from the boundary conditions \eqref{bou01}, \eqref{bou01s} and the change of variables \eqref{eq:nlchangevar}.
Note that, despite the nonlinear change of variables we still have an equivalence in the $H^{2}$-norm between these variables and the physical variables around the equilibrium, i.e.
\begin{lem}
\label{lem:eqnorms}
There exist positive constants $\bar\delta$, $\bar C$ such that for every 
$$\|(H_{i}(t,\cdot)-H^{*}_{i},V_{i}(t,\cdot)-V^{*}_{i})\|_{H^{2}((0,L_{i});\mathbb{R}^{2})}<\bar\delta,$$
we have
\begin{align}\label{equivnd}
  \frac{1}{\bar C}\|(H_{i}(t,\cdot)-H^{*}_{i},V_{i}(t,\cdot)-V^{*}_{i})\|_{H^{2}((0,L_{i});\mathbb{R}^{2})}\leq &\|(y_{1i}(t,\cdot),y_{2i}(t,\cdot))\|_{H^{2}((0,L_{i});\mathbb{R}^{2})}\nonumber\\
  \leq &\bar C\|(H_{i}(t,\cdot)-H^{*}_{i},V_{i}(t,\cdot)-V^{*}_{i})\|_{H^{2}((0,L_{i});\mathbb{R}^{2})}.
\end{align}
\end{lem}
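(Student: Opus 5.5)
The plan is to treat the change of variables \eqref{eq:nlchangevar} as a smooth nonlinear map acting pointwise on the perturbation, and to apply standard composition estimates in $H^2$ on a bounded interval. Fix $i$ and write $h=H_i-H_i^*$, $v=V_i-V_i^*$. Then
\[
y_{1i}=v+\Phi(x,h),\qquad y_{2i}=v-\Phi(x,h),\qquad \Phi(x,h):=2\sqrt{g}\bigl(\sqrt{H_i^*(x)+h}-\sqrt{H_i^*(x)}\bigr).
\]
Since $H_i^*$ is $C^\infty$ on $[0,L_i]$ (it solves the ODE \eqref{steady-state}, which is smooth as long as $L_i<x_0^i$) and $\min H_i^*>0$, the function $\Phi$ is smooth on $[0,L_i]\times[-\tfrac12\min H_i^*,\tfrac12\min H_i^*]$. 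It also satisfies $\Phi(x,0)=0$ and $\partial_h\Phi(x,0)=\sqrt{g/H_i^*(x)}>0$.

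First, I use the Sobolev embedding $H^2(0,L_i)\hookrightarrow C^1([0,L_i])$ and choose $\bar\delta$ so small that $\|h\|_{C^1}\le \tfrac12\min H_i^*$. This keeps $h$ in the domain where $\Phi$ is smooth and gives a uniform $C^1$ bound. Next, I write $\Phi(x,h)=h\,a(x,h)$ with $a(x,h)=\int_0^1\partial_h\Phi(x,sh)\,ds$. The function $a$ is smooth and bounded below by a positive constant. Differentiating twice gives
\[
\partial_{xx}\Phi(x,h)=\partial_h\Phi\,h_{xx}+\partial_{hh}\Phi\,h_x^2+2\partial_{xh}\Phi\,h_x+\partial_{xx}\Phi,
\]
and each term is controlled in $L^2$ by $C\|h\|_{H^2}$. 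For this I use that $\|h_x\|_{L^\infty}\le C\|h\|_{H^2}\le C\bar\delta$, so $h_x^2$ is bounded by $C\|h\|_{H^2}$ in $L^2$. I also use that $\partial_{xx}\Phi(x,h)$ and $\partial_{xh}\Phi(x,h)$ vanish at $h=0$ and so are $O(|h|)$. This yields $\|\Phi(\cdot,h)\|_{H^2}\le C\|h\|_{H^2}$, and therefore the upper bound in \eqref{equivnd}.

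For the lower bound I invert the map explicitly. We have $v=(y_{1i}+y_{2i})/2$. Writing $s=(y_{1i}-y_{2i})/2=\Phi(x,h)$, we get $h=\Psi(x,s):=\bigl(\sqrt{H_i^*}+s/(2\sqrt g)\bigr)^2-H_i^*$, which is a polynomial in $s$ with smooth coefficients and satisfies $\Psi(x,0)=0$. The same composition argument applies to $\Psi$, provided $\|s\|_{C^1}$ is small. This smallness follows from the upper bound just proved, after possibly shrinking $\bar\delta$. It gives $\|h\|_{H^2}\le C\|s\|_{H^2}$, and hence the lower bound. Taking $\bar C$ as the maximum of the constants over the finitely many $i$ completes the argument.

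The only delicate point is the quadratic term $h_x^2$ in the second derivative, together with making sure all constants are uniform. Both are handled by the $C^1$ smallness coming from the Sobolev embedding. Everything else is routine.
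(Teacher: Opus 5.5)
Your proof is correct and is essentially the ``straightforward computation'' from \eqref{eq:nlchangevar} that the paper omits: you use composition estimates in $H^2$ together with $H^2\hookrightarrow C^1$ for the upper bound, and the explicit inverse $h=\sqrt{H_i^*/g}\,s+s^2/(4g)$ for the lower bound. One small slip: $\partial_{xh}\Phi(x,0)=\partial_x\sqrt{g/H_i^*(x)}\neq 0$ (since $(H_i^*)_x<0$), so that term is not $O(|h|)$, but this is harmless because it multiplies $h_x$ and is therefore bounded in $L^2$ by $C\|h\|_{H^1}$.
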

Inequality \eqref{equivnd} follows directly from the definition \eqref{eq:nlchangevar} and straightforward computations, we omit the details here.

We consider an augmented system with state $(\textbf{y}_i,\partial_t\textbf{y}_i,\partial_{tt}\textbf{y}_i)$ where the dynamics
of $\partial_t\textbf{y}_i$ and $\partial_{tt}\textbf{y}_i$ are simply obtained by taking \textcolor{black}{partial derivatives with respect to time} of the
equations \eqref{eq:sysdiag} and the boundary conditions \eqref{nbc}. Very similar to dealing with the linearized system in Section \ref{sec:lyap}, we use the following extended Lyapunov function 
\begin{equation}
	\label{eq:defV}
	\begin{split}
		V =&\sum\limits_{k=0}^{2} \sum\limits_{i=1}^{n}\int_{0}^{L_{i}}f_{1i}(x)(\partial_{t}^{k}y_{1i}(t,x))^{2}+f_{2i}(x)(\partial_{t}^{k}y_{2i}(t,x))^{2}dx
	\end{split}
\end{equation}
to prove the exponential asymptotic stability of the nonlinear system:
\begin{thm}\label{thm2}
 Under condition 
 \eqref{paraphy},
	the nonlinear system \eqref{eq:sysdiag} and \eqref{nbc} is (locally) exponentially stable for the $H^2$-norm, i.e. there exist $\tilde\delta>0$, $\nu>0$ and $\tilde C>0$ such that, for every initial condition  $(y_{1i}(0,x),y_{2i}(0,x)) \in H^2((0,L_i);\mathbb{R}^2) $, $i\in\{1,\cdots,n\}$ satisfying  $\| (y_{1i}(0,x),y_{2i}(0,x))\|_{H^2((0,L_i);\mathbb{R}^2)} < \tilde\delta $ and the first-order compatibility conditions associated to \eqref{eq:sysdiag}, \eqref{nbc} (see \cite{BCBook}), there exists a unique solution $(y_{1i},y_{2i})\in C^0([0,+\infty);H^2((0,L_i);\mathbb{R}^2))$ to the Cauchy problem \eqref{eq:sysdiag} and \eqref{nbc} satisfying
	\begin{equation}\label{esty}
		\sum_{i=1}^n\|(y_{1i}(t, \cdot ),y_{2i}(t,\cdot)) \|_{H^2((0,L_i);\mathbb{R}^2)}\leq\tilde C e^{-\nu t}\left(\sum_{i=1}^n\| (y_{1i}(0,x),y_{2i}(0,x))\|_{H^2((0,L_i);\mathbb{R}^2)} \right),\
		\forall t\in[0,+\infty).
	\end{equation}
\end{thm}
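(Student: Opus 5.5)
We follow the standard route for quasilinear hyperbolic systems (as in \cite{BCBook,C122,HS,hayat2021exponential}). Local existence and uniqueness of an $H^2$ solution for small data satisfying the first-order compatibility conditions is classical, e.g. \cite[Appendix B]{BCBook}. The solution is global with the exponential estimate \eqref{esty} provided we find a functional $V$, equivalent to the squared $H^2$-norm for $\|\mathbf y\|_{H^2}$ small, satisfying $\dot V\le -\mu V$ along smooth solutions (extended to $H^2$ solutions by density). We take $V$ as in \eqref{eq:defV}, with the same weights $f_{1i},f_{2i}$ and constants $\alpha_i,\varepsilon$ as in the proof of Theorem \ref{thm1}. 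The norm equivalence $V\sim\sum_i\|\mathbf y_i\|_{H^2}^2$ uses \eqref{eq:sysdiag} to express $\partial_x\mathbf y_i,\partial_{xx}\mathbf y_i$ through $\partial_t\mathbf y_i,\partial_{tt}\mathbf y_i$ (valid since $\Lambda_i$ is invertible near $\mathbf 0$), together with Lemma \ref{lem:eqnorms} for the physical variables.

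\textbf{Time derivative of $V$.} Write $\mathbf z_i=\partial_t\mathbf y_i$ and $\mathbf w_i=\partial_{tt}\mathbf y_i$. These satisfy linear hyperbolic systems whose coefficients are $\Lambda_i(\mathbf y_i,x)$ and $\partial_{\mathbf y}B_i(\mathbf y_i,x)$ plus terms quadratic in $(\mathbf y,\mathbf z,\mathbf w)$. Differentiating each of the three blocks of $V$ and integrating by parts gives the same structure as \eqref{eq:computelyap}: an interior quadratic form with matrix $N_i$ from \eqref{inner01} perturbed by $O(|\mathbf y_i|+|\partial_x\mathbf y_i|)$, a boundary term $B_k(t)$ for each $k=0,1,2$, and remainder terms bounded by $C\|\mathbf y\|_{H^2}V$. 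For the remainders we use the Sobolev embedding $H^1\subset C^0$ and the fact that $\|\mathbf y\|_{C^1}\le C\|\mathbf y\|_{H^2}$; the term with $\partial_x\mathbf w$ is removed by integrating by parts against $\Lambda_i$. Since $N_i$ is uniformly positive definite by \eqref{detN}, the perturbed interior form is still bounded below by $\nu/2$ times the $L^2$-norm once $\|\mathbf y\|_{H^2}$ is small.

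\textbf{Boundary terms.} This step needs the most care. Differentiating the nonlinear boundary conditions \eqref{bou01} in time gives conditions for $\partial_t^k(H_i,V_i)$ at $A$, $B$ and $D_j$. Their principal parts coincide with the linearized conditions \eqref{boul01} and \eqref{nbou01} with coefficients evaluated at the steady state, up to errors of the form $O(|\mathbf y^{out}|)\,|\partial_t^k\mathbf y^{out}|$ for $k\le 2$. For example, at $B$ the time derivative of $H_1V_1=\sum H_jV_j$ gives $H_1^*\partial_tv_1+V_1^*\partial_th_1=\sum_j(\ldots)$ plus quadratic terms. At $A$ it gives $\partial_t(H_1V_1)=0$, i.e. $\partial_th_1=-(H_1/V_1)\partial_tv_1$ at the current state. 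We then follow the exact linear computation \eqref{B(t)}, mixing Riemann invariants and physical variables. The main quadratic form in $(\partial_t^k r,\partial_t^kv_1(t,0),\partial_t^ky_{2j}(t,L_j))$ is the same as in the linear case: $\overline M\oplus F_1\oplus\operatorname{diag}(f_{1j}\lambda_{1j}c_j^2-f_{2j}\lambda_{2j})$. It is positive definite under \eqref{paraphy}, by Remark \ref{cr1} and the choice $\alpha_j=\widetilde W_1(L_1)/\widetilde W_j(0)$. This choice does not change in the nonlinear setting because $\alpha_j$ depends only on steady-state quantities.

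The delicate point is that the physical-variable relations between $\partial_t^k\mathbf y$ and $\partial_t^k(h,v)$ involve $\sqrt{g/H_i}$ at the current state rather than at $H_i^*$, and that $c_j$ becomes state dependent. Positive definiteness is strict, so these perturbations of size $O(\|\mathbf y\|_{C^0}+\|\partial_t\mathbf y\|_{C^0})$ are absorbed. We obtain $B_k(t)\ge -C\|\mathbf y\|_{H^2}\sum_{l\le k}|\partial_t^l\mathbf y^{out}|^2$, and the right-hand side is in turn absorbed by the positive boundary terms with a small loss. Collecting everything gives $\dot V\le -(\mu-C\|\mathbf y\|_{H^2})V\le-\frac{\mu}{2}V$ for $\|\mathbf y\|_{H^2}<\tilde\delta$. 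A continuity/bootstrap argument keeps the solution in this ball for all time, which yields global existence and \eqref{esty}.
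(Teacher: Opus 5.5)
Your proposal is correct and follows essentially the same route as the paper: the extended functional \eqref{eq:defV} with the linear weights $f_{1i},f_{2i}$ and $\alpha_j=\widetilde W_1(L_1)/\widetilde W_j(0)$, interior terms given by $N_i$ plus cubic remainders controlled through the Sobolev embedding, and boundary terms reduced (mixing Riemann invariants and physical variables) to the positive form $\overline M\oplus F_1\oplus\operatorname{diag}(f_{1j}\lambda_{1j}c_j^2-f_{2j}\lambda_{2j})$ plus absorbable higher-order terms, followed by a density argument. Your continuity/bootstrap step is only a rephrasing of the paper's induction on intervals $[0,T_1]$ based on the well-posedness threshold $\delta(T)$.
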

\begin{proof}
For any $T>0$, there exists $\delta(T)$ such that for any initial condition
$\| (y_{1i}(0,x),y_{2i}(0,x))\|_{H^2((0,L_i);\mathbb{R}^2)} < \delta(T) $ the well-posedness is guaranteed by \cite[Theorem B.1]{BCBook} of which the proof is an adaption of \cite{Kato} and \cite{LiYu}. We now prove the estimation \eqref{esty} on $[0,T]$.
As usual, we temporarily assume that the solutions $(y_{1i},y_{2i})$ are of class $C^3$ in space (this is possible thanks to the well-posedness of the system in $C^{3}$, see for instance \cite{C1}). Differentiating $V$ along the smooth solutions of \eqref{eq:sysdiag}, then we divide the result into boundary part and internal part:
\begin{equation}\label{DVkt}
	\frac{d	V }{dt} \leq  -\widetilde{B}(t)-I(t),
	\end{equation}
\textcolor{black}{where $\widetilde{B}(t)$ contains the information on the boundaries and $I(t)$ is the interior term. Compared with the stabilization of the linearized system, there are some higher order terms in these two terms. These terms can be bounded using the following Sobolev inequality, 
   \begin{equation}\label{soe}
\|\textbf{y}_i(t,\cdot)\|_{C^1([0,L_i];\mathbb{R}^2)}\leq C \|\textbf{y}_i(t,\cdot)\|_{H^{2}((0,L_i);\mathbb{R}^2)},
   \end{equation}
  where $C>0$ is independent of $T$ and $\textbf{y}$. As a consequence the expression of $I(t)$ is 
   \begin{equation}
	\label{eq:exprI}
	\begin{split}
		I(t) = \sum\limits_{k=0}^{2}\sum\limits_{i=1}^{n}&\int_{0}^{L_{i}}\partial_{t}^{k}\textbf{y}_{i}^{T}N_{i}(x)\partial_{t}^{k}\textbf{y}_{i} dx+O\left(\sum\limits_{i=1}^{n}\|\textbf{y}_{i}(t,\cdot)\|^3_{H^2((0,L_i);\mathbb{R}^2)}\right).
        \end{split}
\end{equation}
In the above expression, $N_i(x)$ is still defined by \eqref{inner01} and is positive definite. Here and hereafter, $O(x)$ refers to a function such that $O(x)/|x|$ is bounded when $|x|\rightarrow 0$. 
A similar estimation for the higher order terms of \eqref{eq:exprI} can be found in \cite[Section 6.2]{BCBook} with details.
We next estimate the term }
\begin{equation}
	\label{eq:exprB}
	\begin{split}
		\widetilde{B}(t) =\sum\limits_{k=0}^{2}\sum\limits_{i=1}^{n}&\left[\Lambda_{1i}(\textbf{y}_{i},x)f_{1i}(x)(\partial_{t}^{k}y_{1i})^{2}-\Lambda_{2i}(\textbf{y}_{i},x)f_{2i}(x)(\partial_{t}^{k}y_{2i})^{2}\right]_{0}^{L_{i}}.
	\end{split}
\end{equation}

For the sake of simplicity, we drop the argument $t$ in the following computations. We decompose $\widetilde{B}$ as $\widetilde{B} = \widetilde{B}_{M}+\widetilde{B}_{S}$, where $\widetilde{B}_{M}$ corresponds to the boundary terms involved at the multiple nodes, and $\widetilde{B}_{S}$ at the simple nodes where either a control or an imposed flux applies. \textcolor{black}{In order to compare with the analysis for the linearized system in Section \ref{sec:lyap}, we rewrite these terms as a combination of Riemann invariants and physical variables with higher order terms expressed in Riemann invariants,} namely

\begin{equation}
	\begin{aligned}\label{nnu0}
		\widetilde{B}_{M}(t)= &  \sum\limits_{k=0}^{2}\sum\limits_{j=2}^{n}\left[-\Lambda_{1j}(\textbf{y}_{j}(0), 0)f_{1j}(0)(\partial_{t}^{k}y_{1j}(0))^{2}+\Lambda_{2j}(\textbf{y}_{j}(0),0)f_{2j}(0)(\partial_{t}^{k}y_{2j}(0))^{2}\right]    \\
		&+\sum_{k=0}^{2}\left[\Lambda_{11}(\textbf{y}_{1}(L_1),L_1)f_{11}(L_1)(\partial_{t}^{k}y_{11}(L_1))^{2}-\Lambda_{21}\textbf{y}_{1}(L_1),L_1)f_{21}(L_1)(\partial_{t}^{k}y_{21}(L_1))^{2}\right]  \\	=&\sum_{k=0}^2\partial_{t}^kr^TM(x)\partial_{t}^kr+O\left(\sum\limits_{k=0}^{2}\sum\limits_{i=1}^{n}\left(|\partial_{t}^{k}\textbf{y}_{i}(0)|^{2}+|\partial_{t}^{k}\textbf{y}_{1}(L_1)|^{2}\right)\|\textbf{y}_{i}(t,\cdot)\|_{C^0([0,L_i];\mathbb{R}^2)}\right),
        \end{aligned}
        \end{equation}
       \begin{equation}\label{nnu1}
        \begin{aligned}
		\widetilde{B}_{S}(t)=&  \sum\limits_{k=0}^{2}\sum\limits_{j=2}^{n}\left[\Lambda_{1j}(\textbf{y}_{j}(L_j),L_j)f_{1j}(L_j)(\partial_{t}^{k}y_{1j}(L_j))^{2}-\Lambda_{2j}(\textbf{y}_{j}(L_j),L_j)f_{2j}(L_j)(\partial_{t}^{k}y_{2j}(L_j))^{2}\right]    \\
		&+\sum_{k=0}^{2}\left[-\Lambda_{11}(\textbf{y}_{1}(0),0)f_{11}(0)(\partial_{t}^{k}y_{11}(0))^{2}+\Lambda_{21}(\textbf{y}_{1}(0),0)f_{21}(0)(\partial_{t}^{k}y_{21}(0))^{2}\right]  \\
=&F_1\left(\partial_{t}^{k}v_1(0)\right)^2+\sum_{j=2}^n y_{2j}^2(L_j)\left(f_{1j}(L_j)\lambda_{1j}(L_j)c_j^2-f_{2j}(L_j)\lambda_{2j}(L_j)\right)\\
&+O\left(\sum\limits_{k=0}^{2}\sum\limits_{i=1}^{n}\left(|\partial_{t}^{k}\textbf{y}_{i}(L_i)|^{2}+|\partial_{t}^{k}\textbf{y}_{1}(0))|^{2}\right)\|\textbf{y}_i(t,\cdot)\|_{C^0([0,L_i];\mathbb{R}^2)}\right),\\
		\end{aligned}
	\end{equation}
	where the definitions of the vector $r$, the matrix $M$ and $F_1$ as well as the parameters $c_j$ are the same as those in Section \ref{sec:lyap} for the linearized system. \textcolor{black}{The estimations of \eqref{nnu0}-\eqref{nnu1} are obtained using \eqref{eq:nlchangevar}.} Thus, \eqref{DVkt} becomes  
    \begin{equation}\label{DVt2}
    \begin{aligned}
    	\frac{d	V}{dt} \leq&-\sum\limits_{k=0}^{2}\sum\limits_{i=1}^{n}\int_{0}^{L_{i}}\partial_{t}^{k}\textbf{y}_{i}^{T}N_{i}(x)\partial_{t}^{k}\textbf{y}_{i} dx+O\left(\sum\limits_{i=1}^{n}\|\textbf{y}_{i}(t,\cdot)\|^3_{H^2((0,L_i);\mathbb{R}^2)}\right)\\
        &-\sum_{k=0}^2\partial_{t}^kr^TM(x)\partial_{t}^kr-F_1\left(\partial_{t}^{k}v_1(0)\right)^2-\sum_{j=2}^n y_{2j}^2(L_j)\left(f_{1j}(L_j)\lambda_{1j}(L_j)c_j^2-f_{2j}(L_j)\lambda_{2j}(L_j)\right)\\
&+O\left(\sum\limits_{k=0}^{2}\sum\limits_{i=1}^{n}\left(|\partial_{t}^{k}\textbf{y}_{i}(0)|^{2}+|\partial_{t}^{k}\textbf{y}_{i}(L_i))|^{2}\right)\|\textbf{y}_i(t,\cdot)\|_{C^0([0,L_i];\mathbb{R}^2)}\right).
        \end{aligned}
    \end{equation}
   We recover the quadratic formula of the linear case augmented with (at least) cubic terms. 
   \textcolor{black}{Using \eqref{eq:nlchangevar}} and Sobolev inequality \eqref{soe} again, 
   {\color{black}
   for sufficiently small $\|\textbf{y}_i(t,\cdot)\|_{H^2((0,L_i);\mathbb{R}^2)}$, \begin{equation}
\begin{aligned}
\label{eq:getlinbound}
&-\sum_{k=0}^2\partial_{t}^kr^TM(x)\partial_{t}^kr-F_1\left(\partial_{t}^{k}v_1(0)\right)^2-\sum_{j=2}^n y_{2j}^2(L_j)\left(f_{1j}(L_j)\lambda_{1j}(L_j)c_j^2-f_{2j}(L_j)\lambda_{2j}(L_j)\right)\\
&+O\left(\sum\limits_{k=0}^{2}\sum\limits_{i=1}^{n}\left(|\partial_{t}^{k}\textbf{y}_{i}(0)|^{2}+|\partial_{t}^{k}\textbf{y}_{i}(L_i))|^{2}\right)\|\textbf{y}_i(t,\cdot)\|_{C^0([0,L_i];\mathbb{R}^2)}\right)\\
\leq& 
-\sum_{k=0}^2\partial_{t}^kr^T\left[M(x)+O(\|\textbf{y}_i(t,\cdot)\|_{H^2((0,L_i);\mathbb{R}^2)})\right]\partial_{t}^kr-\left[F_1+O(\|\textbf{y}_i(t,\cdot)\|_{H^2((0,L_i);\mathbb{R}^2)})\right]\left(\partial_{t}^{k}v_1(0)\right)^2\\
&-\sum_{j=2}^n y_{2j}^2(L_j)\left[f_{1j}(L_j)\lambda_{1j}(L_j)c_j^2-f_{2j}(L_j)\lambda_{2j}(L_j)+O(\|\textbf{y}_i(t,\cdot)\|_{H^2((0,L_i);\mathbb{R}^2)})\right].
        \end{aligned}
   \end{equation}}
    Using the well-posedness of the system, \textcolor{black}{for any $\delta_{0}>0$ and $T>0$,} there \textcolor{black}{exists} 
    $\delta(T)$    
   such that if $\| (y_{1i}(0,x),y_{2i}(0,x))\|_{H^2((0,L_i);\mathbb{R}^2)} < \delta(T) $ then $\|\textbf{y}_i(t,\cdot)\|_{H^2((0,L_i);\mathbb{R}^2)}<\delta_{0}$.
   \textcolor{black}{Combining this with \eqref{DVt2} and \eqref{eq:getlinbound},
    under condition \eqref{paraphy} (which is exactly condition \eqref{eq:hypki} by Remark \ref{cr1}), and using
   the positive definiteness of $N$ and $M$ and the positivity of $F_{1}$ and of
   \begin{equation*}
       \left(f_{1j}(L_j)\lambda_{1j}(L_j)c_j^2-f_{2j}(L_j)\lambda_{2j}(L_j)\right),
   \end{equation*}
   there exists $\nu>0$ independent of $T>0$ and $\delta(T)$ such that if $\| (y_{1i}(0,x),y_{2i}(0,x))\|_{H^2((0,L_i);\mathbb{R}^2)} < \delta(T)$, then
}
\begin{equation}\label{ndv}
	\frac{d V}{dt}\leq -\nu V,\;\text{ for any }t\in[0,T].
\end{equation}
Using a density argument similar to \cite[Comment 4.6]{BCBook},
\eqref{ndv} still holds in
the distribution sense $\textbf{y}_i\in C^{0}([0,T];H^2((0,L_i);\mathbb{R}^2))$.
Finally, since $T$ is chosen arbitrarily, one can choose $T_{1}$ sufficiently large and proceed by induction to show that there exists $\tilde\delta$ independent of $T$ such that Theorem \ref{thm2} holds (see \cite[Proof of Theorem 4.1]{bastin2019exponential} for more details).
\end{proof}
\begin{rmk}
Theorem \ref{thm0} then follows from Theorem \ref{thm2} together with the equivalence of the norms given by Lemma \ref{lem:eqnorms}.
\end{rmk}
\section{A Lyapunov Function for the Linearized System of the Tree-shaped Network}\label{LtreeL}
    
In this section, we show how to extend the proof of Theorems \ref{thm1}, \ref{thm0} to the tree-shaped network. 
We only give the sketch of the proof for the linearized system that captures the main spirit of the approach. The adaptation to the stabilization for the nonlinear system is identical to the star-shaped model, so we omit it here. After linearization around any given steady-states $(H^*_i(x),V^*_i(x))$ and introducing the same Riemman invariants as in \eqref{RiemannIn}, the equations \eqref{chaform}--\eqref{all>0} and \eqref{positive} still holds for all $i\in\{1,\cdots,n\}$. \\
The difference lies in the boundary conditions, which \eqref{treebound} become
\begin{equation}
	\begin{aligned}\label{ltreeboul01}
			&	A: h_1(t,0)=-\frac{H_1^*(0)}{V_1^*(0)}v_1(t,0),\\
			& J^i_M:
   v_i(t,L_i)=\sum_{j\in \mathcal{I}^i_{out}}v_j(t,0), \quad i\in\mathcal{M},\\
			& \ \ \ \  \ h_i(t,L_i)=h_j(t,0),\quad i\in\mathcal{M},\quad j\in \mathcal{I}^i_{out},\\
			& J^i_S:y_{1i}(t,L_i)=c_iy_{2i}(t,L_i),\quad i\in\mathcal{S}, \\
		\end{aligned}
	\end{equation}
    \begin{equation}\label{treeeqci}
	c_i=\frac{1+k_i\sqrt{\frac{H_i^*(L_i)}{g}}}{k_i\sqrt{\frac{H_i^*(L_i)}{g}}-1},\quad i\in\mathcal{S}.  \ \
		\end{equation}
Here, as in star-shaped model, we keep the expression of the physical boundary conditions for the starting node $A$ and the multiple nodes $J^i_M$ for simplicity, rather than re-expressing them in Riemann invariants.
		
        We construct the following Lyapunov function
		\begin{equation}\label{treeL}
			V(t)=\sum_{i=1}^{n}\int_{0}^{L_i}\left(f_{1i}(x)y_{1i}^2(t,x)+f_{2i}(x)y_{2i}^2(t,x)\right) dx,
		\end{equation}
		where $f_{1i}$, $f_{2i}$, $\varphi_{1i}$ and $\varphi_{2i}$ are still defined in \eqref{def02} and \eqref{varphi12} respectively, $\alpha_i>0$ are the weight coefficients to be chosen, $\eta_1$ is defined as the solution to \eqref{eta01} and $\eta_j$ $(j=2,\cdots,n)$ is defined as the solution to \eqref{eta0j} with sufficiently small $\varepsilon$. The time derivative of $V$ along the trajectories of \eqref{chaform} is
		\begin{equation}
			\begin{aligned}
				\dot V(t)=&\sum_{i=1}^{n}\int_{0}^{L_i}\left[2f_{1i}y_{1i}(-\lambda_{1i}\partial_{x}y_{1i}-\gamma_{1i}y_{1i}-\delta_{1i}
y_{2i})+2f_{2i}y_{2i}(\lambda_{2i}\partial_{x}y_{2i}-\gamma_{2i}y_{1i}-\delta_{2i}
y_{2i})\right] dx \\
				=&-B(t)-\sum_{i=1}^{n}\int_{0}^{Li} (y_{1i},y_{2i})N_i(t, x)(y_{1i},y_{2i})^{T}dx,
			\end{aligned}
		\end{equation}
  where $B(t)$ and $N(t)$ are given in \eqref{BC1} and \eqref{inner01} respectively.
In order to deal with the boundary term $B(t)$ for the tree-shaped network, we separate \eqref{BC1} according to each local star-shaped model using \eqref{ltreeboul01}. 
	\begin{equation}\label{treeB(t)0}
			\begin{aligned}
				B(t)=&\sum_{i\in \mathcal{M}}\textcolor{black}{S_{1,i}}
    +\sum_{\textcolor{black}{i}\in \mathcal{M}}\textcolor{black}{S_{2,\textcolor{black}{i}}}
    +\sum_{i\in \mathcal{S}}\textcolor{black}{S_{2,i}}
    +\sum_{i\in \mathcal{S}} \textcolor{black}{S_{3,i}},
			\end{aligned}
		\end{equation}
where $S_{1,i}$ represents the contribution of the boundary terms at $x=L_i$ for each channel $i\in\mathcal{M}$, $S_{2,i}$ represents the contribution of the boundary terms at $x=0$ for each channel $i\in \mathcal{M}$ or $i\in\mathcal{S}$, and $S_{3,i}$ represents the contribution of the boundary terms at $x=L_i$ for each channel $i\in\mathcal{S}$ that are given as follows
\textcolor{black}{
\begin{equation}
\begin{split}
S_{1,i} =& f_{1i}(L_i)\lambda_{1i}(L_i)\left( \sum_{j\in\mathcal{I}^i_{out}}v_j(t,0)+h_i(t,L_i)\sqrt{\frac{g}{H_i^*(L_i)}}\right)^2\\
    &-f_{2i}(L_i)\lambda_{2i}(L_i)\left( \sum_{j\in\mathcal{I}^i_{out}}v_j(t,0)-h_i(t,L_i)\sqrt{\frac{g}{H_i^*(L_i)}}\right)^2,\\
    S_{2,\textcolor{black}{i}} =& -f_{1\textcolor{black}{i}}(0)\lambda_{1\textcolor{black}{i}}(0)\left(v_{\textcolor{black}{i}}(t,0)+h_{\textcolor{black}{i}}(t,0)\sqrt{\frac{g}{H_{\textcolor{black}{i}}^*(0)}}\right)^2\\
    &+f_{2\textcolor{black}{i}}(0)\lambda_{2\textcolor{black}{i}}(0)\left(v_{\textcolor{black}{i}}(t,0)-h_{\textcolor{black}{i}}(t,0)\sqrt{\frac{g}{H_{\textcolor{black}{i}}^*(0)}}\right)^2,\\
    S_{3,i} =&\  y_{2i}^2(L_i)\left(f_{1i}(L_i)\lambda_{1i}(L_i)c_i^2-f_{2i}(L_i)\lambda_{2i}(L_i)\right).
\end{split}
\end{equation}
}
Note that, from the definition of the tree-shaped network given in Section \ref{ssec:treeshape}, for any channel $j\in\mathcal{M}\setminus\{1\}$ and any $j\in\mathcal{S}$ there exists a multiple node at the origin of this channel, and thus a unique $i\in \mathcal{M}$ such that $j\in \mathcal{I}_{out}^{i}$. Reciprocally, the branches in $\mathcal{M}\setminus\{1\}$ and $\mathcal{S}$ represent exactly all the branches that are outgoing from all multiple nodes. More precisely, 
\begin{equation}
 \mathcal{M}\setminus\{1\}\cup \mathcal{S} = \bigcup\limits_{i\in\mathcal{M}}\mathcal{I}_{out}^{i},
\end{equation}
together with
\begin{equation}
    \mathcal{M}\cap \mathcal{S} = \emptyset,\quad \mathcal{I}_{out}^{i}\cap\mathcal{I}_{out}^{j} = \emptyset,\; \forall j\neq i.
\end{equation}
As a consequence
	\begin{equation}
    \label{eq:pretreeB}
			\begin{aligned}
    &\sum_{j\in \mathcal{M}\setminus\{1\}} \textcolor{black}{S_{2,j}}+\sum_{\textcolor{black}{j}\in \mathcal{S}}\textcolor{black}{S_{2,\textcolor{black}{j}}}=\sum_{i\in \mathcal{M}}\sum\limits_{j\in\mathcal{I}_{out}^{i}} \textcolor{black}{S_{2,j}},
			\end{aligned}
		\end{equation}
and therefore, \textcolor{black}{using condition \eqref{ltreeboul01} at node $A$,} \eqref{treeB(t)0} becomes
\begin{equation}\label{treeB(t)01}
			\begin{aligned}
				B(t)=&\sum_{i\in\mathcal{M}} \textcolor{black}{S_{1,i}}+\sum_{i\in\mathcal{M}}\sum_{j\in\mathcal{I}^i_{out}} \textcolor{black}{S_{2,j}}+\sum_{i\in \mathcal{S}} \textcolor{black}{S_{3,i}}\\
    &+\Bigg[-f_{11}(0)\lambda_{11}(0)\left(v_1(t, 0)-\frac{H_1^*(0)}{V_1^*(0) }\sqrt{\frac{g}{H_1^*(0)}}v_1(t, 0) \right)^2 \\
    &+f_{21}(0)\lambda_{21}(0)\left(v_1(t, 0)+\frac{H_1^*(0)}{V_1^*(0) }\sqrt{\frac{g}{H_1^*(0)}}v_1(t, 0) \right)^2 \Bigg].
			\end{aligned}
		\end{equation}
We denote 
\begin{equation}
r_i=\begin{pmatrix} v_{i_2}(t, 0), &v_{i_3}(t, 0),&\cdots, &v_{i_{\deg(J^i_M)}}(t, 0),& h_i(t, L_i)  \end{pmatrix}^T,
  \end{equation}
  where here and hereafter, we denote the indices in the set $\mathcal{I}^i_{out}$ as $i_2,i_3,\cdots,i_{\deg(J^i_M)}$. For any $i\in\mathcal{M}$, we also define as previously $Z_{i}$ and $W_{i}$ by \eqref{defZ&W}, $F_{1}$ as in \eqref{theta2zeta} and
\begin{equation}M_i=\begin{pmatrix}
      \theta_{i_2} & Z_i(L_i) & Z_i(L_i)& \cdots & Z_i(L_i) & \omega_{i_2} \\
      Z_i(L_i) & \theta_{i_3} & Z_i(L_i) & \cdots & Z_i(L_i) &\omega_{i_3}\\
       Z_i(L_i) & Z_i(L_i) & \theta_{i_4} & \cdots & Z_i(L_i) & \omega_{i_4}\\
       \vdots & \vdots & \vdots & \ddots &\vdots &\vdots\\
       Z_i(L_i) & Z_i(L_i)& Z_i(L_i) & \cdots & \theta_{i_{\deg(J^i_M)}} & \omega_{i_{\deg(J^i_M)}} \\
       \omega_{i_2} & \omega_{i_3} & \omega_{i_4}  & \cdots & \omega_{i_{\deg(J^i_M)}} & \zeta_i
  \end{pmatrix} \in\mathbb{R}^{\deg(J^i_M)\times\deg(J^i_M)}
  \end{equation}
	with	\begin{equation}\label{treetheta2zeta}
			\begin{aligned}
				&	\theta_{i_l}=	Z_i(L_i)-Z_{i_l}(0),\quad l=2,3,\cdots,\deg(J^i_M),\\
				&\omega_{i_l}=\frac{\sqrt{gH_i^*(L_i)}(W_i(L_i)-W_{i_l}(0))}{H_i^*(L_i)}, \quad l=2,3,\cdots,\deg(J^i_M),\\
				&\zeta_i=\frac{g}{H_i^*(L_i)}\left(Z_i(L_i)-\left(\sum_{l=2}^{\deg(J^i_M)}Z_{i_l}(0)\right)\right).
			\end{aligned}
		\end{equation}
    With these notations,
\eqref{treeB(t)01} becomes:
\begin{equation}\label{treeB(t)}
    B(t) =\sum_{i\in\mathcal{M}}r_{i}^T M_i r_i+F_1v^2_1(t,0)+\sum_{i\in\mathcal{S}} y_{2i}^2(L_i)\left(f_{1i}(L_i)\lambda_{1i}(L_i)c_i^2-f_{2i}(L_i)\lambda_{2i}(L_i)\right).
\end{equation}
    
The key point is to prove the positiveness of the matrices $M_i$ ($i\in\mathcal{M}$) and this can be done exactly as in Section \ref{sec:lyap}:
choosing $\alpha_k$, $k=1,2,\cdots,n$ as follows
\begin{equation}\label{alphai}
  \alpha_i=1,\quad
  \alpha_{i_l}=\frac{\widetilde{W_i}(L_i)}{\widetilde{W_{i_l}}(0)},\quad i\in\mathcal{M},\,  l=2,3,\cdots,\deg(J^i_M)
  \end{equation}
we obtain as previously 
$\omega_{i_l}=0$, $l=2,3,\cdots,\deg(J^i_{M})$ from \eqref{treetheta2zeta}.
  
 Thus, the matrix $M_i$ can be simplified as
 \begin{equation}\overline M_i=\begin{pmatrix}
      \theta_{i2} & Z_i(L_i) & Z_i(L_i)& \cdots & Z_i(L_i) & 0 \\
       Z_i(L_i) & \theta_{i3} & Z_i(L_i) & \cdots & Z_i(L_i) &0\\
        Z_i(L_i) & Z_i(L_i) & \theta_{i4} & \cdots & Z_i(L_i) & 0\\
        \vdots & \vdots & \vdots & \ddots &\vdots &\vdots\\
        Z_i(L_i) & Z_i(L_i)& Z_i(L_i) & \cdots & \theta_{i\deg(J^i_M)} & 0 \\
       0 & 0 &0  & \cdots & 0 & \zeta_i
   \end{pmatrix}
  \end{equation}
  
  Since the structure of each matrix $\overline M_i$ is the same as the one of matrix $\overline M$ defined in \eqref{overlineM}, if we denote by $\overline M_{ik}$, where $k\in\{2,\cdots,\deg(J^i_M)-1\}$, the matrix corresponding to the $k$-th order principal minor determinant of $\overline M_{i}$, using the same transformation for matrix $\overline M_{ik}$. We obtain 
 the transformed matrix $\widetilde{M}_{ik}$ as
  \begin{equation}
      \widetilde{M}_{ik}=\begin{pmatrix}
     \widetilde{\theta}_{i_2} & 0 & 0& \cdots & 0  \\
      0 & -Z_{i_3}(0) &0 & \cdots & 0 \\
      0 & 0 & -Z_{i_4}(0) & \cdots & 0 \\
       \vdots & \vdots & \vdots & \ddots &\vdots \\
       0 & 0 & 0 & \cdots & -Z_{i_{k+1}}(0)  
  \end{pmatrix},
  \end{equation}

where
\begin{equation}\label{trthe}
\begin{aligned}
\widetilde{\theta}_{i_2}&=Z_{i}(L_{i})-Z_{i_2}(0)+\sum\limits_{l=3}^{\deg(J^i_M)}\frac{Z_{i_2}(0)Z_{i}(L_{i})}{Z_{i_l}(0)},
\end{aligned}
\end{equation}
Now, \textcolor{black}{we would like to show} that for each $i\in\mathcal{M}$, $Z_{i}(L_{i})>0$, just as we showed that $Z_{1}(L_{1})>0$ for the star-shaped case. However, this is not \emph{a priori} obvious since $\eta_{1}(0)=\lambda_{21}/\lambda_{11}+\varepsilon<1$ while $\eta_{i}(0)=1+\varepsilon>1$ so it is not \emph{a priori} clear that $\eta_{i}$ remains sufficiently small to ensure that $Z_{i}(L_{i})>0$. The crucial point to conclude lies in Lemma \ref{lem:cj}. Indeed,
\begin{align}\label{trzp}
			Z_i(L_i)=&	\lambda_{1i}(L_i)f_{1i}(L_i)-\lambda_{2i}(L_i)f_{2i}(L_i)\nonumber\\
=&\left(\alpha_i \varphi_{2i}^2\left(\frac{\varphi_i^2-\eta_i^2}{\eta_i}\right)\right)(L_i)>0.
		\end{align}
And we emphasize that all of $Z_{i_l}(0)$ $(l=2,\cdots,\deg(J^i_M))$ being negative is a sufficient condition for the diagonal matrix $\widetilde{M}_{ik}$ being positive definite. Indeed, just like for the simple star-shaped case, it is easy to check from \eqref{trthe} that $\widetilde{\theta}_{i_2}$ is positive under $Z_{i_l}(0)<0$. 	
To get the positiveness of the matrix $\widetilde M_i$, one needs to check from \eqref{treetheta2zeta} that $\theta_{i2}$ and $\zeta_i$ are all positive under $Z_{i_l}(0)<0$, which is indeed the case by noticing \eqref{trzp}.
On the other side, from \eqref{def02} and \eqref{defZ&W}, one has
\begin{equation}\label{treeequiv}
Z_{i_l}(0)<0\Longleftrightarrow  \eta_{i_l}(0)>1.
\end{equation}
This, together with the equations \eqref{eta0j}, satisfied by the $\eta_{j}$ ($j=2,\cdots,n$) imply that each $\widetilde M_i$ is positive definite.
As a consequence, $B(t)$ is positive definite. \\
Extending these computations to the nonlinear system can be done exactly as in the star-shaped case (see Section \ref{sec:nonlinear}). We omit the details here.

 \section{Conclusion \textcolor{black}{and perspectives}}\label{con}
 In this paper, our main contribution is to investigate the exponential stability of star-shaped and tree-shaped networks of flows modeled by Saint-Venant equations, where all channels are in the subcritical flow regime. The network consists one main channel that separates into 
 several
 tributaries (or subchannels) that can separate themselves into several tributaries and so on, forming multi-node junctions inside the network. In particular, we demonstrate that stability can be achieved without applying any control at the internal junctions but instead by imposing controls only at the downstream ends of the network.\\
 
To establish this result, we construct a Lyapunov function with functional coefficients and rigorously verify the stability of this nonlinear system in the $H^2$ norm. One interesting perspective that remains open would be to deal with the incorporation of arbitrary slopes. While we expect that cases where the slope is smaller than the friction can still be handled with our approach, the cases where the influence of slope is larger than the influence of the friction have a completely different behaviors, and in particular would introduce absolute values in the calculation of $\eta_i$, 
significantly impacting the subsequent analyses. In fact, it would be surprising that in these cases if similar theorems held for arbitrarily large length of the domain. This presents a promising direction for future research.\\

\textcolor{black}{While our results deal with any tree-shaped network, another promising direction of research is the stabilization of networks that contains a cycle, as it can be found in \cite{gugat2025stabilization,Lizhuang2019}. For such cyclic graphs, new difficulties occur and, in particular, feedback laws that work for tree-shaped graphs sometimes do not ensure stabilization on networks with cycles (see \cite{gugat2025stabilization}). In \cite{gugat2025stabilization}, a feedback control that allows to stabilize a system with a single cycle was proposed and it was showed through a careful analysis that there are intrinsic limitations when one cannot act on internal nodes. As such, this is the main remaining question for the stabilization of general networks of flows with cycles modeled by Saint-Venant equations.}

 \section*{Acknowledgments}
 The authors would like to thank the National Natural Science Foundation of China (No. 12171368) and Fundamental Research Funds for the Central Universities, China (2023-3-ZD-04) as well as China Scholarship Council (No. 202206260108). The authors would also like to thank the Young Talent France-China 2024 program of the French Embassy and the ANR-Tremplin StarPDE ANR-24-ERCS-0010.

\appendix

\section{Computations for Lemma \ref{lem1}}\label{app:A}
In this section, we give technical but tedious computations used in the proof of Lemma \ref{lem1}.\\

\textbf{Computation of $\lambda_{2i}\left(2\gamma_{2i}-\gamma_{1i} \right)-\lambda_{1i}\delta_{2i}$}

We can obtain from the expressions of $\gamma_{1i}$, $\delta_{1i}$, $\gamma_{2i}$ and $\delta_{2i}$ in \eqref{mall>0} and \eqref{lam1} that
\begin{equation}
\begin{aligned}
    &\lambda_{2i}\left(2\gamma_{2i}-\gamma_{1i} \right)-\lambda_{1i}\delta_{2i}\\
    =&-\frac{(H^*_{i})_x}{H^*_i}\lambda_{1i}\lambda_{2i}\left(\frac{1}{2}+\frac{\lambda_{2i}}{V^*_i}-\frac{\textcolor{black}{p}\lambda_{2i}}{2\sqrt{gH^*_i}}+\frac{3\lambda_{2i}}{4\lambda_{1i}}-\frac{3\lambda_{1i}}{4\lambda_{2i}}-\frac{\lambda_{1i}}{V^*_i}-\frac{\textcolor{black}{p}\lambda_{1i}}{2\sqrt{gH^*_i}}\right)\\
    =&-\frac{(H^*_{i})_x}{H^*_i}\lambda_{1i}\lambda_{2i}\left(\frac{1}{2}+\frac{\lambda_{2i}-\lambda_{1i}}{V^*_i}-\frac{\textcolor{black}{p}(\lambda_{1i}+\lambda_{2i})}{2\sqrt{gH^*_i}}+\frac{3\lambda_{2i}}{4\lambda_{1i}}-\frac{3\lambda_{1i}}{4\lambda_{2i}}\right)\\
    =&-\frac{(H^*_{i})_x}{H^*_i}\lambda_{1i}\lambda_{2i}\left(-\frac{3+2\textcolor{black}{p}}{2}-\frac{3\sqrt{gH_i^*}V_i^*}{\lambda_{1i}\lambda_{2i}}\right)\\
    =&\left( H_i^*\right)_x \left[ \frac{3+2\textcolor{black}{p}}{2}\frac{gH_i^*-V_i^{*2}}{H_i^*}+\frac{3\sqrt{gH_i^*}V_i^*}{H_i^*}\right].
\end{aligned}
\end{equation}
\textbf{{Computation of $\displaystyle\frac{\gamma_{2i}}{\lambda_{2i}}$}}

From \eqref{mall>0} and using $H^*_iV^*_i=Q_i$, we have 
\begin{equation}\label{comb22}
\begin{aligned}
   \frac{\gamma_{2i}}{\lambda_{2i}}=&-\frac{(H_i^*)_x}{H_i^*}\lambda_{1i}\left[\frac{1}{4\lambda_{2i}}+\frac{1}{V_i^*}-\frac{\textcolor{black}{p}}{2\sqrt{gH_i^*}}\right]\\
   =&(H_i^*)_x\left[\frac{-\lambda_{1i}}{4H_i^*\lambda_{2i}}-\frac{\lambda_{1i}}{H_i^*V_i^*}+\frac{\textcolor{black}{p}\lambda_{1i}}{2H_i^*\sqrt{gH_i^*}}\right]\\
   =&(H_i^*)_x\left[\frac{\lambda_{2i}-\lambda_{2i}-\lambda_{1i}}{4H_i^*\lambda_{2i}}-\frac{\lambda_{1i}}{H_i^*V_i^*}+\frac{\textcolor{black}{p}\lambda_{1i}}{2H_i^*\sqrt{gH_i^*}}\right]\\
   =&(H_i^*)_x\left[\frac{1}{4H_i^*}-\frac{\sqrt{gH^*_i}}{2H_i^*(\sqrt{gH^*_i}-V^*_i)}-\frac{\sqrt{gH^*_i}+V^*_i}{H_i^*V_i^*}+\frac{\textcolor{black}{p}(\sqrt{gH^*_i}+V^*_i)}{2H_i^*\sqrt{gH_i^*}}\right]\\
   =&(H_i^*)_x\left[\frac{1}{4H_i^*}-\frac{\sqrt{gH^*_i}}{2H_i^*(\sqrt{gH^*_i}-V^*_i)}-\frac{\sqrt{gH^*_i}}{H_i^*V_i^*}-\frac{1}{H_i^*}+\frac{\textcolor{black}{p}}{2H_i^*}+\frac{\textcolor{black}{p}V^*_i}{2H_i^*\sqrt{gH_i^*}}\right]\\
   =&(H_i^*)_x\left[\frac{2\textcolor{black}{p}-3}{4}\frac{1}{H_i^*}-\frac{\sqrt{gH^*_i}}{2H_i^*(\sqrt{gH^*_i}-V^*_i)}-\frac{\sqrt{gH^*_i}}{Q_i}+\frac{\textcolor{black}{p}Q_i}{2H_i^{*2}\sqrt{gH_i^*}}\right].
   \end{aligned}
\end{equation}
\section{Proof of Remark \ref{remf}}
\label{app:nontrivial}
From \eqref{tem} and \eqref{comb1}, we can obtain that
\begin{align}\label{tem1}
   e^{\int_{0}^{x}2\frac{\gamma_{2i}}{\lambda_{1i}}d\xi}  = &\varphi_i(x)e^{\int_{0}^{x}\left(\frac{2\gamma_{2i}-\gamma_{1i}}{\lambda_{1i}}-\frac{\delta_{2i}}{\lambda_{2i}}\right)d\xi}\nonumber\\
   =&\varphi_i(x)\left( \frac{H_i^*(x)}{H_i^*(0)}\right)^{\frac{\textcolor{black}{3+2p}}{2}}\cdot \frac{\lambda_{2i}(x)}{\lambda_{2i}(0)} \cdot \frac{\lambda_{1i}(0)}{\lambda_{1i}(x)}.
\end{align}
Moreover, from \eqref{comb1} and \eqref{mall>0}, we can compute
\begin{equation}\label{uus1}
\begin{aligned}
    f(s):&=e^{\int_{0}^{s}\left(\frac{2\gamma_{2i}-\gamma_{1i}}{\lambda_{1i}}-\frac{\delta_{2i}}{\lambda_{2i}}\right)d\varepsilon}\frac{\gamma_{2i}(s)}{\lambda_{2i}(s)}
     =\left( \frac{H_i^*(s)}{H_i^*(0)}\right)^{\frac{\textcolor{black}{3+2p}}{2}}\cdot \frac{\lambda_{1i}(0)}{\lambda_{2i}(0)} \cdot \frac{\gamma_{2i}(s)}{\lambda_{1i}(s)}\\
     &=\left( \frac{H_i^*(s)}{H_i^*(0)}\right)^{\frac{\textcolor{black}{3+2p}}{2}}\cdot \frac{\lambda_{1i}(0)}{\lambda_{2i}(0)} \cdot\frac{-(H_i^*)_s}{H_i^*(s)}\lambda_{2i}(s)\left[\frac{1}{4\lambda_{2i}(s)}+\frac{1}{V_i^*(s)}-\frac{\textcolor{black}{p}}{2\sqrt{gH_i^*(s)}}\right]\\
     &=-\left(\frac{1}{H_i^*(0)}\right)^{\frac{\textcolor{black}{3+2p}}{2}}\cdot\frac{\lambda_{1i}(0)}{\lambda_{2i}(0)}\cdot\left( H_i^*(s)\right)^{\frac{\textcolor{black}{1+2p}}{2}} \cdot(H_i^*)_s\cdot\left[ -\frac{\textcolor{black}{3+2p}}{4}+\frac{\sqrt{gH_i^*(s)}}{V^*_i(s)}+\frac{\textcolor{black}{p}V^*_i(s)}{2\sqrt{gH_i^*(s)}}\right]\\
     &=-\left(\frac{1}{H_i^*(0)}\right)^{\frac{\textcolor{black}{3+2p}}{2}}\cdot\frac{\lambda_{1i}(0)}{\lambda_{2i}(0)}\cdot\left( H_i^*(s)\right)^{\frac{\textcolor{black}{1+2p}}{2}} \cdot(H_i^*)_s\cdot\left[ -\frac{\textcolor{black}{3+2p}}{4}+\frac{\sqrt{g}(H_i^*(s))^{\frac{3}{2}}}{Q_i}+\frac{\textcolor{black}{p}Q_i}{2\sqrt{g}(H_i^*(s))^{\frac{3}{2}}}\right]\\
     &=\left(\frac{1}{H_i^*(0)}\right)^{\frac{\textcolor{black}{3+2p}}{2}}\cdot\frac{\lambda_{1i}(0)}{\lambda_{2i}(0)}\cdot\left[ \frac{\textcolor{black}{3+2p}}{4}\left( H_i^*(s)\right)^{\frac{\textcolor{black}{1+2p}}{2}} (H_i^*)_s-\frac{\sqrt{g}(H_i^*(s))^{\textcolor{black}{2+p}}(H_i^*)_s}{Q_i}-\frac{\textcolor{black}{p}Q_i(H_i^*(s))^{\textcolor{black}{p-1}}(H_i^*)_s}{2\sqrt{g}}\right].
    \end{aligned}
\end{equation}
As a consequence, for any $x\in[0,x_i^0)$, we have
\begin{equation}\label{uus2}
\begin{aligned}
&\int_0^{x}f(s)\, ds\\
=&\left(\frac{1}{H_i^*(0)}\right)^{\frac{\textcolor{black}{3+2p}}{2}}\cdot\frac{\lambda_{1i}(0)}{\lambda_{2i}(0)}\int_0^x\left[ \frac{\textcolor{black}{3+2p}}{4}\left( H_i^*(s)\right)^{\frac{\textcolor{black}{1+2p}}{2}} (H_i^*)_s-\frac{\sqrt{g}(H_i^*(s))^{\textcolor{black}{2+p}}(H_i^*)_s}{Q_i}-\frac{\textcolor{black}{p}Q_i(H_i^*(s))^{\textcolor{black}{p-1}}(H_i^*)_s}{2\sqrt{g}}\right]\, ds\\
=&\left(\frac{1}{H_i^*(0)}\right)^{\frac{\textcolor{black}{3+2p}}{2}}\cdot\frac{\lambda_{1i}(0)}{\lambda_{2i}(0)}\left[\frac{1}{2}\Delta_{\frac{\textcolor{black}{3+2p}}{2}} -\frac{\sqrt{g}}{\textcolor{black}{(3+p)}Q_i}\Delta_{\textcolor{black}{3+p}} -\frac{Q_i}{2\sqrt{g}}\Delta_{\textcolor{black}{p}}\right],
\end{aligned}
\end{equation}
where, to simplify the notation, we denote:
\textcolor{black}{\begin{equation}
\Delta_{\sigma}:=(H_i^*(x))^{\sigma} - (H_i^*(0))^{\sigma},  \ \sigma\in \mathbb{R}.
\end{equation}}

Substituting \eqref{tem1} and \eqref{uus2} into \eqref{exbareta}, we get
\begin{align}\label{exbaretas}
\bar{\eta}_i(x)=&\frac{\lambda_{2i}(x)}{\lambda_{1i}(x)}\varphi_i(x)+\frac{\varphi_i(x)\left( \frac{H_i^*(x)}{H_i^*(0)}\right)^{\frac{\textcolor{black}{3+2p}}{2}}\cdot \frac{\lambda_{2i}(x)}{\lambda_{2i}(0)} \cdot \frac{\lambda_{1i}(0)}{\lambda_{1i}(x)}}{\left(\frac{1}{H_i^*(0)}\right)^{\frac{\textcolor{black}{3+2p}}{2}}\cdot\frac{\lambda_{1i}(0)}{\lambda_{2i}(0)}\left[-\frac{1}{2}\Delta_{\frac{\textcolor{black}{3+2p}}{2}} +\frac{\sqrt{g}}{(\textcolor{black}{3+p})Q_i}\Delta_{\textcolor{black}{3+p}}+\frac{Q_i}{2\sqrt{g}}\Delta_{\textcolor{black}{p}}\right]+\frac{\lambda_{1i}(0)}{\lambda_{1i}(0)-\lambda_{2i}(0)}}\nonumber\\
=&\frac{\lambda_{2i}(x)}{\lambda_{1i}(x)}\varphi_i(x)+\frac{\varphi_i(x)\left(H_i^*(x)\right)^{\frac{\textcolor{black}{3+2p}}{2}}\cdot \frac{\lambda_{2i}(x)}{\lambda_{1i}(x)} }{-\frac{1}{2}\Delta_{\frac{\textcolor{black}{3+2p}}{2}} +\frac{\sqrt{g}}{\textcolor{black}{(3+p)}Q_i}\Delta_{\textcolor{black}{3+p}} +\frac{Q_i}{2\sqrt{g}}\Delta_{\textcolor{black}{p}} +\frac{\lambda_{2i}(0)\left(H_i^*(0)\right)^{\frac{\textcolor{black}{3+2p}}{2}}}{\lambda_{1i}(0)-\lambda_{2i}(0)}}\nonumber\\
=&\frac{\lambda_{2i}(x)}{\lambda_{1i}(x)}\varphi_i(x)\left[1+\frac{\left(H_i^*(x)\right)^{\frac{\textcolor{black}{3+2p}}{2}} }{-\frac{1}{2}\Delta_{\frac{\textcolor{black}{3+2p}}{2}} +\frac{\sqrt{g}}{\textcolor{black}{(3+p)}Q_i}\Delta_{\textcolor{black}{3+p}} +\frac{Q_i}{2\sqrt{g}}\Delta_{\textcolor{black}{p}} +\frac{\lambda_{2i}(0)\left(H_i^*(0)\right)^{\frac{\textcolor{black}{3+2p}}{2}}}{\lambda_{1i}(0)-\lambda_{2i}(0)}}\right]\nonumber\\
=&\frac{\lambda_{2i}(x)}{\lambda_{1i}(x)}\varphi_i(x)\!\!\left[1\!+\!\frac{\left(H_i^*(x)\right)^{\frac{\textcolor{black}{3+2p}}{2}} }{-\frac{1}{2}\Delta_{\frac{\textcolor{black}{3+2p}}{2}} \!\!+\!\!\frac{\sqrt{g}}{\textcolor{black}{(3+p)}Q_i}\Delta_{\textcolor{black}{3+p}}\!+\!\frac{Q_i}{2\sqrt{g}}\Delta_{\textcolor{black}{p}} \!+\!\frac{\sqrt{g}}{2Q_i}(H^*_i(0))^{\textcolor{black}{3+p}}\!-\!\!\frac{1}{2}(H^*_i(0))^{\frac{\textcolor{black}{3+2p}}{2}}}\!\right]\nonumber\\
=&\frac{\lambda_{2i}(x)}{\lambda_{1i}(x)}\varphi_i(x)\frac{\frac{1}{2}(H_i^{*}(x))^{\frac{\textcolor{black}{3+2p}}{2}}+\frac{\sqrt{g}}{\textcolor{black}{(3+p)}Q_i}(H_i^{*}(x))^{\textcolor{black}{3+p}}+\frac{\textcolor{black}{(1+p)}\sqrt{g}}{\textcolor{black}{2(3+p)}Q_i}(H_i^{*}(0))^{\textcolor{black}{3+p}}+\frac{Q_i}{2\sqrt{g}}((H^*_i(x))^{\textcolor{black}{p}}-(H^*_i(0))^{\textcolor{black}{p}})}{-\frac{1}{2}(H_i^{*}(x))^{\frac{\textcolor{black}{3+2p}}{2}}+\frac{\sqrt{g}}{\textcolor{black}{(3+p)}Q_i}(H_i^{*}(x))^{\textcolor{black}{3+p}}+\frac{\textcolor{black}{(1+p)}\sqrt{g}}{\textcolor{black}{2(3+p)}Q_i}(H_i^{*}(0))^{\textcolor{black}{3+p}}+\frac{Q_i}{2\sqrt{g}}((H^*_i(x))^{\textcolor{black}{p}}-(H^*_i(0))^{\textcolor{black}{p}})}\nonumber\\
=&m_i(x)\frac{\lambda_{2i}(x)}{\lambda_{1i}(x)}\varphi_i(x).
\end{align}
which completes the proof of Remark \ref{remf}.

\section{Proof of Theorem \ref{thm:single}}
\label{app:single}
The proof follows from the proof of Theorem \ref{thm0}. We only look here at the linearized system, the nonlinear system can be treated exactly as in Section \ref{sec:nonlinear} (or as in \cite{HS} given that we have a single channel). Let us set
\begin{equation}
\label{eq:defkk0}
k = \mathcal{G}_{2}'(H^{*}(L)),\;\; k_{0}=\mathcal{G}_{1}'(H^{*}(0)).
\end{equation}
Using the change of variables \eqref{RiemannIn}, the system becomes \eqref{chaform} 
where we drop the index $i$
and the boundary conditions are 
\begin{equation}
    y_{1}(t,L) = c y_{2}(t,L),\;\; y_{1}(t,0) = c_{0} y_{2}(t,0),
\end{equation}
where $c$ is given by \eqref{eqcj} (with 
$H^{*}(L)$ and $k$ instead of $H_{j}^{*}(L_j)$ and $k_{j}$) and $c_{0}$ is given by
\begin{equation}
\label{eq:defc0}
    c_{0} = \frac{k_{0}H^{*}(0)+\sqrt{gH^{*}(0)}}{k_{0}H^{*}(0)-\sqrt{gH^{*}(0)}}.
\end{equation}
Now, we use the Lyapunov function 
\begin{equation}
    V(t) = \int_{0}^{L}\left(f_{1}(x)y_{1}^{2}(t,x)+f_{2}(x)y_{2}^{2}(t,x)\right)dx,
\end{equation}
with $f_{1}$ and $f_{2}$ given by 
\begin{equation}\label{def022}
				f_{1}(x)=\alpha \phi_{1}^2\frac{1}{\lambda_{1}\eta}, \quad f_{2}(x)=\alpha \phi_{2}^2\frac{\eta}{\lambda_{2}}, 
		\end{equation}
        where $\eta$ is the solution to
        \begin{equation}\label{eta0j2}
			\left\{
			\begin{aligned}
				&\eta'=\left| \frac{\delta_{1}\phi}{\lambda_{1}}+\frac{\gamma_{2}}{\lambda_{2}\phi}\eta^2 \right|+\varepsilon, \\
				&\eta(0)=1+\varepsilon,
			\end{aligned}
			\right.
		\end{equation}
        with $\phi=\phi_1/\phi_2$ and 
        \begin{equation}\label{varphi122}
\phi_{1}(x)=e^{\int_{0}^{x}\frac{\gamma_{1}(s)}{\lambda_{1}(s)}ds},\quad \phi_{2}(x)=e^{-\int_{0}^{x}\frac{\delta_{2}(s)}{\lambda_{2}(s)}ds}.
\end{equation}
where $\gamma_{1}$,$\gamma_{2}$, $\delta_{1}$, $\delta_{2}$, $\lambda_{1}$, $\lambda_{2}$ are defined by \eqref{mall>0} and \eqref{lam1} by dropping the index $i$.
As previously (see \eqref{eq:computelyap}--\eqref{B(t)}) 
\begin{equation}
    \dot V(t) = -B(t) - \int_{0}^{L} (y_{1},y_{2})N(x)(y_{1},y_{2})^{T}dx,
\end{equation}
where $N(x)$ is defined as \eqref{inner01} by dropping the index $i$ and thus is still positive definite but, for a single channel, $B(t)$ is much simpler than in \eqref{B(t)} and simply writes
\begin{equation}
\label{eq:defBsingle}
    B(t) = -y_{2}^{2}(t,0)(f_{1}(0)\lambda_{1}(0)c_{0}^{2}-f_{2}(0)\lambda_{2}(0)) + y_{2}^{2}(t,L)(f_{1}(L)\lambda_{1}(L)c^{2}-f_{2}(L)\lambda_{2}(L)).
\end{equation}
The second term is identical to the case of the star-shaped system and is positive thanks to the second condition of \eqref{eq:condsingle}, \eqref{eq:defkk0}, \eqref{eta-phi03}--\eqref{def002}. The only difference comes from the first term, but, using the definition of $f_{1}$ and $f_{2}$, together with \eqref{varphi12} we have
\begin{equation}
\label{eq:term1single}
f_{1}(0)\lambda_{1}(0)c_{0}^{2}-f_{2}(0)\lambda_{2}(0) = \alpha\left(\frac{c_{0}^{2}}{\eta(0)}-\eta(0)\right) =\alpha\left(\frac{c_{0}^{2}}{1+\varepsilon}-(1+\varepsilon)\right)  , 
\end{equation}
From \eqref{eq:condsingle}, \eqref{eq:defkk0} and \eqref{eq:defc0}, we deduce that $c_{0}^{2}\leq 1$ and therefore from \eqref{eq:term1single} and \eqref{eq:defBsingle}, since $\alpha>0$, $B(t)$ is positive definite with respect to $y_{2}^{2}(t,0)$ and  $y_{2}^{2}(t,L)$ and this concludes the proof.

  \bibliographystyle{plain}
	\bibliography{SV-networks}
	
\end{document}